\numberwithin{equation}{section}
\theoremstyle{plain}
\newtheorem{theorem}[subsubsection]{Theorem}
\newtheorem{lemma}[subsubsection]{Lemma}
\newtheorem{prop}[subsubsection]{Proposition}
\newtheorem{cor}[subsubsection]{Corollary}
\newtheorem*{mtheorem*}{Main Theorem}
\theoremstyle{definition}
\newtheorem{defn}[subsubsection]{Definition}
\newtheorem{remark}[subsubsection]{Remark}
\newtheorem{nota}[subsubsection]{Notation}
\Crefname{prop}{Proposition}{Propositions}
\Crefname{cor}{Corollary}{Corollaries}
\Crefname{defn}{Definition}{Definitions}
\def\CC{\mathbb{C}}
\def\NN{\mathbb{Z}_{\ge 0}}
\def\QQ{\mathbb{Q}}
\def\RR{\mathbb{R}}
\def\ZZ{\mathbb{Z}}
\newcommand\cC{\mathcal{C}}
\newcommand\cF{\mathcal{F}}
\newcommand\cN{\mathcal{N}}
\def\bL{\mathbf{L}}
\def\bM{\mathbf{M}}
\newcommand\frg{\mathfrak{g}}
\newcommand\frl{\mathfrak{l}}
\newcommand\frs{\mathfrak{s}}
\newcommand\id{\textup{id}}
\newcommand{\wt}{\textup{wt}\;}
\newcommand\Hom{\textup{Hom}}
\newcommand{\Ext}{\textup{Ext}}
\renewcommand\sl{\mathfrak{sl}}
\newcommand{\btimes}{\boxtimes}
\newcommand\quash[1]{}
\newcommand{\ov}{\overline}
\newcommand\xr{\xrightarrow}
\newcommand\one{\mathbf{1}}
\renewcommand\a\alpha
\renewcommand\b\beta
\newcommand\g\gamma
\renewcommand\d\delta
\newcommand\D\Delta
\renewcommand{\th}{\theta}
\renewcommand{\l}{\lambda}
\newlength{\@pxlwd} \newlength{\@rulewd} \newlength{\@pxlht}
\def\sprite#1(#2,#3)[#4,#5]{
   \edef\@sprbox{\expandafter\@cdr\string#1\@nil @box}
   \expandafter\newsavebox\csname\@sprbox\endcsname
   \edef#1{\expandafter\usebox\csname\@sprbox\endcsname}
   \expandafter\setbox\csname\@sprbox\endcsname =\hbox\bgroup
   \vbox\bgroup
  \catcode`.=\active\catcode`B=\active\catcode`:=\active\catcode`|=\active
      \@pxlwd=#4 \divide\@pxlwd by #3 \@rulewd=\@pxlwd
      \@pxlht=#5 \divide\@pxlht by #2
      \def .{\hskip \@pxlwd \ignorespaces}
      \def B{\@ifnextchar B{\advance\@rulewd by \@pxlwd}{\vrule
         height \@pxlht width \@rulewd depth 0 pt \@rulewd=\@pxlwd}}
      \def :{\hbox\bgroup\vrule height \@pxlht width 0pt depth
0pt\ignorespaces}
      \def |{\vrule height \@pxlht width 0pt depth 0pt\egroup
         \prevdepth= -1000 pt}
   }
\def\endsprite{\egroup\egroup}
\def\hboxtr{\mathbin{\FormOfHboxtr}} 
\newcommand{\vir}{\mathcal{L}}
\newcommand{\virb}{\vir_{\ge0}}
\newcommand{\virn}{\vir_{<0}}
\newcommand{\uea}[1]{\mathcal{U}(#1)}
\newcommand{\blank}{{-}}
\newcommand{\hw}{highest-weight}
\newcommand{\oc}{\mathcal{O}_c}
\newcommand{\ocfin}{\oc^{\textup{fin}}}
\newcommand{\ocleft}{\oc^{\textup{L}}}
\newcommand{\ocright}{\oc^{\textup{R}}}
\newcommand{\cofcat}{\mathcal{C}_1}
\DeclarePairedDelimiter{\abs}{\lvert}{\rvert}
\renewcommand\author@andify{%
  \nxandlist {\unskip ,\penalty-1 \space\ignorespaces}%
    {\unskip {} \@@and~}%
    {\unskip \penalty-2 \space \@@and~}%
}
\begin{document}

\title{Tensor Categories arising from the Virasoro Algebra}
\author[T.~Creutzig, C.~Jiang, F.~Orosz Hunziker, D.~Ridout, J.~Yang]{Thomas Creutzig, Cuipo Jiang, Florencia Orosz Hunziker,\\ David Ridout and Jinwei Yang}
\thanks{We would like to thank Yi-Zhi Huang, Robert McRae and Antun Milas for many useful discussions. We also thank the referee for their valuable comments and suggestions. T.\ C.\ is supported by NSERC $\#$RES0020460, C.\ J.\ is supported by NSFC grants 11771281 and 11531004, DR's research is supported by the Australian Research Council Discovery Project DP160101520.}

\address{(T.~Creutzig) Department of Mathematical and Statistical Sciences, University of Alberta, Edmonton, AB, Canada T6G 2G1.}
\email{creutzig@ualberta.ca}

\address{(C.~Jiang) School of Mathematical Sciences, Shanghai Jiao Tong University, Shanghai 200240, China.}
\email{cpjiang@sjtu.edu.cn}

\address{(F.~Orosz~Hunziker) Mathematics Department, University of Colorado, Boulder,  Boulder, CO 80309, USA.}
\email{florencia.orosz@colorado.edu}

\address{(D.~Ridout) School of Mathematics and Statistics, University of Melbourne, Parkville, Victoria, Australia, 3010.}
\email{david.ridout@unimelb.edu.au}

\address{(J.~Yang) Department of Mathematical and Statistical Sciences, University of Alberta, Edmonton, AB, Canada T6G 2G1.}
\email{jinwei2@ualberta.ca}


\begin{abstract}
We show that there is a braided tensor category structure on the category of $C_1$-cofinite modules for the
(universal or simple) Virasoro vertex operator algebras of
arbitrary central charge.
In the generic case of central charge $c=13-6(t+t^{-1})$, with
$t \notin \QQ$, we prove semisimplicity, rigidity and non-degeneracy and also compute
the fusion rules of this tensor category.
\end{abstract}

\maketitle

\markleft{T.~CREUTZIG, C.~JIANG, F.~OROSZ HUNZIKER, D.~RIDOUT AND J.~YANG}

\onehalfspacing

\section{Introduction}
\subsection{Conformal-field-theoretic tensor categories}
Tensor categories arising in conformal field theory have been studied since the late 1980s. Moore and Seiberg were the first to realize that an axiomatic formulation
of rational two-dimensional conformal field theory essentially leads to
what we now
call a modular tensor category \cite{MS}. A vertex operator algebra is a mathematically rigorous formulation of the chiral algebra of a two dimensional conformal field theory and, thus, it is a natural general expectation that sufficiently nice categories of modules of a given vertex operator algebra form rigid
braided tensor categories.

In particular, Moore and Seiberg predicted \cite{MS} that the category of integrable \hw{} modules for an affine Lie algebra at a fixed positive-integral level should have the structure of a rigid braided tensor category. Kazhdan and Lusztig were then the first to construct such a
structure on a certain category of modules for affine Lie algebras \cite{KL1}--\cite{KL5}. However, the level in their work was not a positive integer, but was restricted so that adding 
the dual Coxeter number did not give
a positive rational number. Then, several works, including those of Beilinson--Feigin--Mazur \cite{BFM} and Huang--Lepowsky \cite{HL7}, constructed
rigid braided tensor categories at positive-integral levels; these were 
in fact shown to be modular tensor categories.

The Huang--Lepowsky construction of these
categories
is based on the
tensor category theory for general
vertex operator algebras which they developed in \cite{HL1}--\cite{HL7} and \cite{H1}, in the rational case, and in
\cite{HLZ0}--\cite{HLZ8}, for the logarithmic case (with the latter work joint with
Zhang). This
theory
turns out to be a powerful tool to construct tensor category structures and prove results in vertex operator algebras, topology, mathematical physics and related areas. Currently, however, almost all known
examples of
braided tensor category structures on categories of modules of a vertex operator algebra are for algebras satisfying Zhu's $C_2$-cofiniteness condition.

The most obvious non-$C_2$-cofinite examples are the Heisenberg vertex operator algebras.  The category generated by their \hw{} modules, with real highest weights, was shown to be a braided tensor category (in fact a vertex tensor category) in \cite{CKLiRi}.  The first substantial examples of non-rational and non-$C_2$-cofinite vertex operator algebras are the affine vertex operator algebras at admissible levels where the category of ordinary modules has been shown to admit a braided tensor category structure \cite{CHY}.  Rigidity was also proven in the case that the Lie algebra is simply-laced \cite{C1}. The non-generic but non-admissible-level case is the most challenging and here the tensor category structure is not yet understood (for recent progress, see \cite{ACGY, CY}).

Here,
we study the
important family of vertex operator algebras coming
from representations of the Virasoro algebra. For $c \in \CC$, denote by $M(c,0)$ and $L(c, 0)$ the universal Virasoro vertex operator algebra and its unique simple quotient, respectively \cite{FZ1}.  The representation theories of both have been studied intensively in
the mathematics and physics literature. In particular, the Virasoro
minimal models correspond to $L(c_{p,q},0)$, where
\begin{equation} \label{eq:cminmod}
	c_{p, q} = 1- 6\frac{(p-q)^2}{pq}, \quad \text{for} \quad
	p, q \in \ZZ_{\ge 2} \quad \text{and} \quad \gcd\{p, q\} = 1.
\end{equation}
The vertex operator algebras $L(c_{p,q},0)$ are rational \cite{W} and $C_2$-cofinite \cite{DLM}.
Huang's general theorems \cite{H4,H5} for rational and $C_2$-cofinite vertex operator algebras then show
that the modules
of $L(c_{p,q},0)$
form
a modular tensor category.

\subsection{The problem}
In this paper, we aim to construct braided tensor structures
on certain categories of $M(c,0)$-modules for
arbitrary $c \in \CC$ (note that $M(c,0) = L(c,0)$ unless $c = c_{p, q}$ as in \eqref{eq:cminmod}). As
$M(c, 0)$ is neither rational nor $C_2$-cofinite,
constructing such tensor category structures for these types of vertex operator algebras is usually very hard due to the fact that they generally have infinitely many simple objects and may admit nontrivial extensions. Another difficult problem is to establish rigidity --- even for semisimple categories this is usually very difficult.

Given $c \in \CC$, let $H_c$ denote the set of $h \in \CC$ such that the Virasoro Verma module $V(c,h)$ of central charge $c$ and conformal weight $h$ is reducible.  Let $L(c,h)$ denote the simple quotient of $V(c,h)$.  We study the category $\ocfin$ of finite-length $M(c,0)$-modules whose composition factors are of the form $L(c,h)$, with $h \in H_c$. It turns out that the objects of this category are all $C_1$-cofinite. It is actually very natural to consider the category $\cofcat$ of lower-bounded $C_1$-cofinite modules, not only because the $C_1$-cofiniteness condition is a very minor restriction and is relatively easy to verify for familiar families of vertex operator algebras,
but also because the fusion product of two $C_1$-cofinite modules is again
$C_1$-cofinite \cite{Mi} (see also \cite{N}).
However, this category is not abelian in general because it need not be
closed under taking submodules and contragredient duals.
Furthermore, the associativity isomorphism that plays a key role in the
tensor category theory of Huang, Lepowsky and Zhang is also hard to verify for $\cofcat$.

In \cite{H5}, Huang studied the category of grading-restricted generalized modules for $C_1$-cofinite vertex operator algebras (in the sense of Li \cite{LiCofin}) and proved that they form a braided tensor category if the simple objects are $\RR$-graded, $C_1$-cofinite and there exists a positive integer $N$ such that the differences between the real parts of the highest conformal weights of the simple objects are bounded by $N$ and
the level-$N$ Zhu algebra is finite-dimensional. In particular, if the vertex operator algebra is of
positive-energy (that is, if $V_{(n)} = 0$ for $n < 0$ and $V_{(0)} = \CC\one$) and has only finitely many simple modules (as is
the case when it is $C_2$-cofinite), then the category of grading-restricted generalized modules has a braided tensor category structure. Unfortunately, Huang's conditions do
not hold for the Virasoro vertex operator algebras, mainly because of the existence of infinitely many simple modules.

\subsection{The main results}
We first show that the category $\cofcat$ of lower-bounded $C_1$-cofinite generalized $M(c,0)$-modules 
is actually the same as the category $\ocfin$ of finite-length $M(c,0)$-modules with
composition factors $L(c, h)$ for $h \in H_c$. Since it is obvious that the category $\ocfin$ is closed under taking submodules and contragredient duals, this bypasses
the difficulty of showing that $\cofcat$ is abelian directly. The identification of these two categories, together with the applicability result in \cite{H6}, also verifies the associativity isomorphism needed to invoke
the logarithmic tensor category theory of Huang, Lepowsky and Zhang. As a result, the category
$\ocfin$ has a
braided tensor category structure.

We also prove rigidity and non-degeneracy for the category $\ocfin$ when $c$ is generic, that is, when
$c = 13-6(t+t^{-1})$ with
$t \notin \QQ$.
A full subcategory $\ocleft$ of this category has been studied in \cite{FZ2} by I.~Frenkel and M.~Zhu. We first show that the category $\ocfin$ is semisimple, with infinitely many simple objects, and then compute the fusion rules of these simples.
As a consequence of these computations
and the coset realization of the Virasoro vertex operator algebra \cite{GKO,ACL}, we show that there is a braided tensor equivalence between the full subcategory $\ocleft$ and a simple current twist of the category of ordinary modules for the affine vertex operator algebra $V_{\ell}(\sl_2)$ at a certain non-rational level $\ell$ depending on $t$.  This shows that
the objects in $\ocleft$ are rigid. Since the category $\ocfin$ is generated by the simple objects in $\ocleft$ and a similar full subcategory $\ocright$ (by the fusion rules of \cref{fusionrule}), the category $\ocfin$ is rigid.

To summarize,
the main theorem of the paper is as follows.
\begin{mtheorem*}
Let $\ocfin$ denote the category of finite-length
$M(c,0)$-modules of
central charge $c=13-6(t+t^{-1})$
whose
composition factors are $C_1$-cofinite.
Then,
\begin{enumerate}
\item $\ocfin$ has a braided tensor category structure (\cref{thm:tensor}).
\item For $t \notin \QQ$, $\ocfin$ is semisimple, rigid and non-degenerate  (Theorems \ref{thm:semisimple} and \ref{thm:rigid} and Proposition \ref{prop:nondegenerate}).
\end{enumerate}
\end{mtheorem*}

\noindent Recently, the tensor structures of the categories $\ocfin$ with $t 
= 1/p, p\in\ZZ_{\geq 1}$, were 
studied by McRae and the last author in \cite{MY}, where the fusion rules and rigidity of $\ocfin$ are established. 

\subsection{Applications and future work}
Rigid braided vertex tensor categories are
 relevant in various modern problems. For example, the quantum geometric Langlands program can be related to equivalences of tensor categories of modules of $W$-algebras and affine vertex algebras, see 
\cite[Sec.~6]{AFO} for example. In fact, our rigidity proof (\cref{thm:rigid}) also proves one case of Conjecture 6.4 in \cite{AFO} for $G=SU(2)$. From a physics perspective, this relates to the $S$-duality and again vertex tensor categories are crucial as they describe categories of line defects ending on topological boundary conditions \cite{CGai, FGai}.

Vertex tensor categories also allow one to construct module
categories of vertex algebras out of those
of certain subalgebras \cite{KO, HKL, CKM}. Our results are very useful from this point of view for various important vertex algebras as they all contain the Virasoro vertex operator algebra as their subalgebra. For example, in the context of $S$-duality, vertex superalgebras that are extensions in a completion of $\ocfin$ appear in \cite{CGL}. As an instructive example, the simple affine vertex operator superalgebra $L_k(\mathfrak{osp}(1\vert2))$
at positive-integer levels \cite{CFK} and, more generally
at all admissible levels \cite{CKLRi},
has been well-understood by viewing it as an extension of the tensor product of $L_k(\mathfrak{sl}(2))$ and a
Virasoro minimal model. The results of our work allow us to extend these studies to generic levels.

More importantly, the vertex algebras with the best understood non-semisimple representation categories
are associated to extensions of the Virasoro vertex operator algebra at central charge $c_{1,p}$. These are
the triplet algebras $\mathcal W(p)$ \cite{Ka, GaK, AM1, NT, TW, FGST1}, the singlet algebras $\mathcal M(p)$ \cite{A1, AM2, CM, RiW} and the logarithmic $\mathcal B(p)$ algebras \cite{A2, CRiW, ACKR}. Note that the $\mathcal B(p)$-algebras are extensions of the Virasoro algebra times a rank one Heisenberg algebra and that $\mathcal B(2)$ is the rank one $\beta\gamma$ vertex algebra whose even subalgebra is $L_{-1/2}(\mathfrak{sl}(2))$ \cite{Ri}.

An illustration of the usefulness of tensor categories is the recent work \cite{ACGY}. There, uniqueness results of certain vertex operator algebra extensions of the Virasoro algebra at central charge $c_{1,p}$ are derived \cite[Thm.~8 and Cor.~14]{ACGY}. These results could only be proven because of our Main Theorem. Moreover, the uniqueness statements were then employed to resolve affirmatively the conjectures (see \cite{CRiW, C2, ACKR}) that the $\mathcal B(p)$ algebra is a subregular $W$-algebra of type $\mathfrak{sl}_{p-1}$ and also a chiral algebra of Argyres-Douglas theories of type $(A_1, A_{2p-3})$.

Another important application in this spirit is to construct and study rigid braided tensor category structures for these extensions of Virasoro vertex operator algebras. While braided tensor structures on the category of generalized modules for the triplet algebras $\mathcal W(p)$ have been proven to exist \cite{H5} and the rigidity and fusion rules are known 
\cite{TW}, rigid braided tensor structures on the corresponding module 
categories for the singlet vertex algebras $\mathcal M(p)$ and the $\mathcal B(p)$-algebras have only recently been constructed and studied in \cite{CMY2}.  This study combined 
the Virasoro tensor categories constructed in the present paper 
with 
direct limit completion technology for 
vertex and braided tensor categories \cite{CMY}. We expect to apply the techniques and methodology developed in the present paper, as well as in \cite{CMY,CMY2}, to study the vertex and braided tensor structures of 
the representation categories of many more vertex operator algebras.

\section{Preliminaries}
\subsection{Vertex operator algebras}
Let $(V, Y, \mathbf{1}, \omega)$ be a vertex operator algebra. We first recall the definitions of various types of $V$-modules.
\begin{defn}
\leavevmode
\begin{enumerate}
\item
A \textit{weak $V$-module} is a vector space $W$
equipped with a vertex operator map
\begin{equation}
\begin{aligned}
 Y_W \colon V &\rightarrow (\mathrm{End}\,W)[[x,x^{-1}]],\\
 v &\mapsto Y_W(v,x)=\sum_{n\in\ZZ} v_n\,x^{-n-1},
\end{aligned}
\end{equation}
satisfying the following axioms:
\begin{enumerate}[label=\textup{(\roman*)}]
\item
The lower truncation condition: for $u, v\in V$, $Y_W(u,x)v$ has only finitely many
terms with
negative powers in $x$.

 \item
 The vacuum property: $Y_W(\mathbf{1},x)$ is the identity endomorphism $1_W$ of $W$. 

 \item
 The Jacobi identity: for $u,v\in V$,
\begin{align}
 &x_0^{-1}\delta\left(\dfrac{x_1-x_2}{x_0}\right) Y_W(u,x_1)Y_W(v,x_2) \notag \\
 &- x_0^{-1}\delta\left(\dfrac{-x_2+x_1}{x_0}\right)Y_W(v,x_2)Y_W(u,x_1) \notag \\
 &\quad = x_2^{-1}\delta\left(\dfrac{x_1-x_0}{x_2}\right)Y_W(Y(u,x_0)u,x_2).
\end{align}

\item
The Virasoro algebra relations: if we write $Y_W(\omega,x)=\sum_{n\in\ZZ} L_n x^{-n-2}$,
then for any $m,n\in\ZZ$, we have
\begin{equation}
 [L_m,L_n]=(m-n)L_{m+n}+\dfrac{m^3-m}{12} \delta_{m+n,0} c \,1_W,
\end{equation}
where $c \in \CC$ is the central charge of $V$.

\item
The $L_{-1}$-derivative property: for any $v\in V$,
\begin{equation}
 Y_W(L_{-1}v,x)=\dfrac{d}{dx} Y_W(v,x).
 \end{equation}
\end{enumerate}

\item
A \textit{generalized $V$-module} is a weak $V$-module $(W, Y_{W})$ with a
$\CC$-grading
\begin{equation} \label{eq:Vmodgrading}
	W=\coprod_{n\in \CC}W_{[n]}
\end{equation}
such that $W_{[n]}$ is a
generalized eigenspace for the operator $L_{0}$ with eigenvalue $n$.

\item
A \textit{lower-bounded generalized $V$-module} is a generalized $V$-module such that for any $n\in\CC$, $W_{[n+m]}=0$ for $m\in\ZZ$ sufficiently negative.

\item A \textit{grading-restricted generalized $V$-module} is a lower-bounded generalized $V$-module such that dim $W_{[n]}<\infty$ for any $n\in\CC$.

\item
An \textit{ordinary $V$-module} (sometimes just called
a $V$-module for brevity) is a grading-restricted
generalized $V$-module such that the $W_{[n]}$ in \eqref{eq:Vmodgrading}
are eigenspaces
for the operator $L_{0}$.
\item A generalized $V$-module $W$ has {\em length} $l$ if there exists a filtration
$W = W_1 \supset \cdots \supset W_{l+1} = 0$ of generalized $V$-submodules such that each $W_i/W_{i+1}$ is
irreducible.
A {\em finite-length} generalized $V$-module is one whose length is finite.
\end{enumerate}
\end{defn}

\begin{defn}
A vertex operator algebra is \emph{rational} if every weak module is a direct sum of simple ordinary modules. We say that a category of ordinary $V$-modules is {\em semisimple} if every ordinary module is a direct sum of simple ordinary modules.
\end{defn}

Let $V$ be a vertex operator algebra and let $(W, Y_W)$ be a lower-bounded generalized $V$-module, graded as in \eqref{eq:Vmodgrading}.
Its \emph{contragredient module} is then the vector space
\begin{equation}
W' = \coprod_{n \in \CC}(W_{[n]})^*,
\end{equation}
equipped with the vertex operator map $Y'$ defined by
\begin{equation}
\langle Y'(v,x)w', w \rangle = \langle w', Y^{\circ}_W(v,x)w \rangle
\end{equation}
for any $v \in V$, $w' \in W'$ and $w \in W$, where
\begin{equation}
Y^{\circ}_W(v,x) = Y_W(e^{xL(1)}(-x^{-2})^{L_0}v, x^{-1}),
\end{equation}
for any $v \in V$, is the \emph{opposite vertex operator} (see 
\cite{FHL}).
We also use the standard notation
\begin{equation}
\ov{W} = \prod_{n \in \CC}W_{[n]},
\end{equation}
for the formal completion of $W$ with respect to the $\CC$-grading.

The notion of a logarithmic intertwining operator \cite{Mwk,HLZ2} plays a key role in the study of the representations of vertex operator algebras that are not rational, such as the Virasoro vertex operator algebras discussed in this paper. Let $W\{x\}$ denote the space of formal power series in arbitrary complex powers of $x$ with coefficients in $W$.
\begin{defn}\label{log:def}
Let $(W_1,Y_1)$, $(W_2,Y_2)$ and $(W_3,Y_3)$ be generalized $V$-modules. A \emph{logarithmic intertwining
operator of type $\binom{W_3}{W_1\,W_2}$} is a linear map
\begin{equation}\label{log:map0}
\begin{aligned}
\mathcal{Y}\colon W_1\otimes W_2&\to W_3\{x\}[\log x], \\
w_{(1)}\otimes w_{(2)}&\mapsto{\mathcal{Y}}(w_{(1)},x)w_{(2)}
=\sum_{k=0}^{K}\sum_{n\in{\CC}}\left({w_{(1)}}_{n, k}^{\mathcal{Y}}w_{(2)}\right)x^{-n-1}(\log x)^{k},
\end{aligned}
\end{equation}
satisfying
the following conditions:
\begin{enumerate}[label=\textup{(\roman*)}]
\item
The lower truncation condition: given
any $w_{(1)}\in W_1$, $w_{(2)}\in W_2$, $k=0, \dots, K$ and $n\in{\CC}$,
\begin{equation}
{w_{(1)}}_{n+m, k}^{\mathcal{Y}}w_{(2)}=0\quad\text{for}\ m\in\ZZ\ \text{sufficiently large}.
\end{equation}
\item
The Jacobi identity:
\begin{align}\label{Jacob}
&x^{-1}_0\delta \bigg( {x_1-x_2\over x_0}\bigg)
Y_3(v,x_1){\mathcal{Y}}(w_{(1)},x_2)w_{(2)}\nonumber \\
&- x^{-1}_0\delta \bigg( {x_2-x_1\over -x_0}\bigg)
{\mathcal{Y}}(w_{(1)},x_2)Y_2(v,x_1)w_{(2)}\nonumber \\
&\quad = x^{-1}_2\delta \bigg( {x_1-x_0\over x_2}\bigg){
\mathcal{Y}}(Y_1(v,x_0)w_{(1)},x_2) w_{(2)},
\end{align}
for $v\in V$, $w_{(1)}\in W_1$ and $w_{(2)}\in W_2$.
\item
The $L_{-1}$-derivative property: for any $w_{(1)}\in W_1$,
\begin{equation}
{\mathcal{Y}}(L_{-1}w_{(1)},x)=\frac d{dx}{\mathcal{Y}}(w_{(1)},x).
\end{equation}
\end{enumerate}
A logarithmic intertwining operator $\mathcal{Y}$ is called an \textit{intertwining operator} if no $\log x$ appears in ${\mathcal{Y}}(w_{(1)},x)w_{(2)}$, for any $w_{(1)}\in W_1$ and $w_{(2)}\in W_2$. The dimension of the space  $\mathcal{V}_{W_1 W_2}^{W_3}$ of all logarithmic intertwining operators is called the \emph{fusion coefficient} or \textit{fusion rule} of the same type; it is denoted by $\mathcal{N}_{W_1 W_2}^{W_3}$.
\end{defn}

We remark that the term ``fusion rule'' commonly has a different meaning in the literature, referring instead to an explicit identification of the isomorphism class of a ``fusion product''.

The following finiteness condition plays an important role in this paper:
\begin{defn}
Let $V$ be a vertex operator algebra and let $W$ be a weak $V$-module. Let $C_1(W)$ be the subspace of $W$ spanned by elements of the form $u_{-1}w$, where $u\in V_+:=\coprod_{n\in \ZZ_{>0}} V_{(n)}$ and $w\in W$. We say that $W$ is \textit{$C_1$-cofinite} if the \textit{$C_1$-quotient} $W/C_1(W)$ is finite-dimensional.
\end{defn}
\noindent We shall also need the following facts.
\begin{lemma}[\cite{H1}] \label{c1cofinite}
Let $V$ be a vertex operator algebra.
\begin{enumerate}[label=\textup{(\arabic*)}]
	\item For a (generalized) $V$-module $W$ and a (generalized) $V$-submodule $U$ of $W$, \label{qu}
	\begin{equation}
		C_1(W/U)=\frac{C_1(W)+U}{U} \quad \text{and so} \quad \frac{W/U}{C_1(W/U)}\cong\frac{W}{C_1(W)+U}.
	\end{equation}
	It follows that $W/U$ is $C_1$-cofinite if $W$ is.
	\item If $W$ is a finite-length $V$-module with $C_1$-cofinite 
	composition factors, then $W$ is also $C_1$-cofinite. \label{inclusion}
\end{enumerate}
\end{lemma}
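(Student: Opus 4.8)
The plan is to prove part (1) directly from the compatibility of vertex operators with quotients, and then to deduce part (2) from part (1) via an extension argument together with induction on length.

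For part (1), I would begin with the observation that the quotient map $\pi\colon W\to W/U$ is a homomorphism of $V$-modules, so that the vertex operators on $W/U$ are the induced ones and $u_{-1}\pi(w)=\pi(u_{-1}w)$ for all $u\in V$ and $w\in W$. Since $C_1(W/U)$ is spanned by the elements $u_{-1}\pi(w)=\pi(u_{-1}w)$ with $u\in V_+$ and $w\in W$, it is exactly the image $\pi(C_1(W))=(C_1(W)+U)/U$, which is the first displayed equality. The isomorphism is then immediate from the third isomorphism theorem,
\[
\frac{W/U}{C_1(W/U)}=\frac{W/U}{(C_1(W)+U)/U}\cong\frac{W}{C_1(W)+U},
\]
and because $C_1(W)\subseteq C_1(W)+U$ this right-hand side is a quotient of $W/C_1(W)$; hence it is finite-dimensional whenever $W$ is $C_1$-cofinite.

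For part (2), I would first isolate the essential content as an extension lemma: given a short exact sequence $0\to A\to B\to C\to 0$ of $V$-modules with $A$ and $C$ both $C_1$-cofinite, the module $B$ is $C_1$-cofinite as well. Granting this, part (2) follows by induction on the length $l$ along a composition series $W=W_1\supset\cdots\supset W_{l+1}=0$: the base case $l=1$ is precisely the hypothesis on composition factors, while the inductive step applies the extension lemma to $0\to W_2\to W\to W/W_2\to 0$, where $W_2$ is $C_1$-cofinite by the inductive hypothesis and $W/W_2$ is an irreducible composition factor, hence $C_1$-cofinite by assumption. To prove the extension lemma I would identify $A$ with its image in $B$ and apply part (1) to $B\to B/A\cong C$, obtaining that $B/(C_1(B)+A)\cong C/C_1(C)$ is finite-dimensional. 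It then suffices to bound $(C_1(B)+A)/C_1(B)\cong A/(A\cap C_1(B))$; since each generator $u_{-1}a$ of $C_1(A)$ with $a\in A$ already lies in $C_1(B)$, we have $C_1(A)\subseteq A\cap C_1(B)$, so $A/(A\cap C_1(B))$ is a quotient of the finite-dimensional space $A/C_1(A)$. Thus $B/C_1(B)$ is an extension of two finite-dimensional spaces, and is therefore finite-dimensional.

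The argument is essentially formal, and I expect the only delicate point to be that $C_1$ interacts with submodules only one-sidedly: one has the inclusion $C_1(A)\subseteq A\cap C_1(B)$ but in general not equality. This is exactly why, in the extension step, one must work with $A\cap C_1(B)$ rather than $C_1(A)$ and use this inclusion to transfer finite-dimensionality from $A/C_1(A)$ in the correct direction.
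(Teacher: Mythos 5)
Your proof is correct and complete. The paper itself gives no proof of this lemma, deferring to \cite{H1}; your argument --- that $C_1$ commutes with module quotient maps (giving part (1)), and part (2) via an extension lemma resting on the one-sided inclusion $C_1(A)\subseteq A\cap C_1(B)$ together with induction on the length of a composition series --- is exactly the standard argument underlying the cited result, and the delicate point you flag (that only the inclusion, not equality, holds for submodules) is indeed the one place where care is needed.
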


\subsection{Representations of Virasoro algebra} \label{sec:vir}
The Virasoro algebra is the Lie algebra
\begin{equation}
\vir = \bigoplus_{n=-\infty}^{+\infty}\CC L_n \oplus \CC\underline{c}
\end{equation}
 with the commutation relations
\begin{equation}
[L_m, L_n] = (m-n)L_{m+n}+\frac{m^3-m}{12}\delta_{m+n,0}\underline{c}, \quad [\vir, \underline{c}] = 0.
\end{equation}
Set
\begin{equation}
\virb=\bigoplus_{n\ge 0} \CC L_n\oplus \CC\underline{c}
\end{equation}
and recall that for $c,h \in \CC$, the Verma module $V(c,h)$ for
$\vir$ is defined to be
\begin{align}
V(c,h) = \uea{\vir}\otimes_{\uea{\virb}}\CC\one_{c, h},
\end{align}
where $\uea{\blank}$ denotes a universal enveloping algebra and the $\virb$-module structure of
$\CC\one_{c,h}$ is given by
$L_0 \one_{c,h}=h\one_{c,h}$, $\underline{c}\one_{c,h}=c \one_{c,h}$ and
$L_n \one_{c,h}=0$ for $n > 0$. As usual, the Verma module $V(c,h)$ has a
unique (possibly trivial) maximal proper submodule.  We denote its unique simple quotient
by $L(c,h)$.

When $h=0$, $L_{-1}\one_{c,0}$ is a singular vector in $V(c,0)$ (the tensor product symbol is omitted for brevity). It was shown in \cite{FZ1} that
\begin{equation}
M(c,0) = V(c,0)/\langle L_{-1}\one_{c,0}\rangle
\end{equation}
admits the structure of a vertex operator algebra.  It is called the \emph{universal Virasoro vertex operator algebra} of central charge $c$.  The simple quotient
$L(c,0)$ of $M(c,0)$ therefore admits a
vertex operator algebra structure as well.  Note that all $L(c,0)$- and $M(c,0)$-modules are $\vir$-modules.  Another important result of \cite{FZ1} is that every \hw{} $\vir$-module of central charge $c$ is an $M(c,0)$-module.

We recall the
existence criterion of Feigin and Fuchs for
singular vectors in Verma $\vir$-modules.
Useful expositions may be found in \cite{IK, KR}.
Note that a Verma module is reducible if and only if it possesses a non-trivial singular vector (by which we mean one that is not proportional to the cyclic \hw{} vector).
\begin{prop}[\cite{FF}]\label{thm:vermared}
For
$r,s\in \ZZ_{\ge1}$ and $t \in \CC\setminus\{0\}$, define
\begin{equation} \label{eq:virpara}
c=c(t)=13-6t-6t^{-1}, \quad
h=h_{r,s}(t)=\frac{r^2-1}{4}t-\frac{rs-1}{2}+\frac{s^2-1}{4}t^{-1}.
\end{equation}
\begin{enumerate}
	\item If there exist $r,s\in \ZZ_{\ge1}$ and $t \in \CC\setminus\{0\}$ such that $c$ and $h$ satisfy \eqref{eq:virpara}, then there is a singular vector of weight $h+rs$ in the Verma module $V(c,h)$.
	\item Conversely, if $V(c,h)$ possesses a non-trivial singular vector, then there exist $r, s \in \ZZ_{\ge1}$ and $t \in \CC \backslash \{0\}$ such that \eqref{eq:virpara} holds.
	\item For each $N$, there exists at most one singular vector of weight $h+N$ in $V(c,h)$, up to scalar multiples.
\end{enumerate}
\end{prop}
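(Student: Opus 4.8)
The plan is to reduce everything to a computation of the Shapovalov (contravariant) form on the Verma module $V(c,h)$ and the factorization of its Jantzen-type determinant. Recall that $V(c,h) = \bigoplus_{N \ge 0} V(c,h)_{[h+N]}$ is graded by $L_0$-eigenvalue, with $V(c,h)_{[h+N]}$ spanned by the monomials $L_{-n_1}\cdots L_{-n_k}\one_{c,h}$ with $n_1 \ge \cdots \ge n_k \ge 1$ and $\sum n_i = N$; the dimension of this space is $p(N)$, the number of partitions of $N$. On each graded piece there is a symmetric bilinear form $\langle\cdot,\cdot\rangle_{c,h}$, the Shapovalov form, determined by $\langle \one_{c,h}, \one_{c,h}\rangle = 1$ and $L_n^\dagger = L_{-n}$. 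The key classical fact, which I would quote or sketch, is the \emph{Kac determinant formula}: up to a nonzero constant depending only on $N$,
\begin{equation*}
\det \langle\cdot,\cdot\rangle_{c,h}\big|_{V(c,h)_{[h+N]}} = \prod_{\substack{r,s \ge 1 \\ rs \le N}} \bigl(h - h_{r,s}(t)\bigr)^{p(N-rs)},
\end{equation*}
where $c = c(t)$ and $h_{r,s}(t)$ are as in \eqref{eq:virpara}. (Here one writes $c = 13 - 6t - 6t^{-1}$, which is always solvable for $t \in \CC\setminus\{0\}$, possibly after replacing $t$ by $t^{-1}$; the product is symmetric under $t \leftrightarrow t^{-1}$ combined with $r \leftrightarrow s$, so the formula is well-defined.) The standard derivation of this formula—originally due to Kac, with complete proofs by Feigin–Fuchs and by Kac–Kazhdan—proceeds by first checking that each factor $h - h_{r,s}(t)$ must divide the determinant (because when $h = h_{r,s}(t)$ the module $V(c,h)$ has a singular vector at level $rs$, forcing a kernel), then pinning down the multiplicities and the leading term by a degree count and an explicit computation along the line of modules $V(c,h)$ with $c$ fixed; I would cite \cite{KR, IK} for the details rather than reproduce them.

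Granting the determinant formula, the three assertions follow readily. For (2): if $V(c,h)$ has a nontrivial singular vector, it lies in some $V(c,h)_{[h+N]}$ with $N \ge 1$ and is orthogonal to all of $V(c,h)_{[h+N]}$ under the Shapovalov form (a singular vector is annihilated by all $L_n$, $n>0$, hence pairs trivially against $L_{-n_1}\cdots L_{-n_k}\one_{c,h}$ after moving the raising operators across). Thus the form is degenerate at level $N$, so its determinant vanishes, so $h = h_{r,s}(t)$ for some $r,s \ge 1$ with $rs \le N$ — this is exactly \eqref{eq:virpara}. For (1): conversely, suppose $c = c(t)$ and $h = h_{r,s}(t)$. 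One shows directly that the maximal proper submodule of $V(c,h)$ is nonzero and contains a singular vector at level $rs$. The cleanest route is the Jantzen filtration / BGG-type argument: deform $h$ to $h + \varepsilon$, observe that $\det|_{V_{[h+rs]}}$ vanishes to positive order at $\varepsilon = 0$ (the factor $h - h_{r,s}(t)$ appears with multiplicity $p(0) = 1$), conclude that the Shapovalov form degenerates at level $rs$, and take any nonzero vector $u$ in the radical of minimal $L_0$-weight; a minimal-weight radical vector is automatically singular, because $L_n u$ (for $0 < n \le rs$) again lies in the radical but has strictly smaller weight, hence is zero. Its weight is $h + rs$. (Alternatively one can invoke the explicit singular vectors in the two boundary cases and the embedding structure of Verma modules à la Feigin–Fuchs, but the Jantzen argument is shortest.) For (3): the space of singular vectors of weight $h+N$ is contained in the radical of the Shapovalov form at level $h+N$; uniqueness up to scalar is the statement that this radical is at most one-dimensional in the relevant weight, which follows from the structure theory of Virasoro Verma modules (Feigin–Fuchs: the maximal submodule is generated by at most two singular vectors, and two distinct singular vectors necessarily occur at different levels unless $V(c,h)$ sits in the "generic reducible" case where still each level carries at most one). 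I would cite \cite{FF, IK, KR} for this last point.

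The main obstacle is genuinely the Kac determinant formula itself — everything else is soft. Its proof requires real work: establishing that the factors divide the determinant uses the existence half of the very statement we are proving (so one must be careful about circularity — the standard fix is to prove existence of singular vectors at the two "degenerate" points $(r,1)$ and $(1,s)$ by hand via explicit Segal/Feigin–Fuchs formulas, or to use the Kac–Kazhdan determinant for $\widehat{\mathfrak{sl}}_2$ and the quantum Drinfeld–Sokolov / coset reduction), and matching multiplicities requires a careful induction on $N$ together with an analysis of how singular vectors in sub-Verma-modules contribute. Given that this is a "Preliminaries" proposition cited from \cite{FF}, the honest plan for \emph{this} paper is to not reprove the determinant formula at all: state it with references, and give only the short deductions (2), (3), and the Jantzen-radical argument for (1) as above.
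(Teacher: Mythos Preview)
The paper does not prove this proposition at all: it is stated as a result of Feigin--Fuchs with citation \cite{FF}, and the surrounding text points to \cite{IK, KR} for expositions. Your final paragraph correctly anticipates exactly this, so your ``plan'' and the paper's treatment coincide.

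Your sketch of the Kac-determinant / Jantzen-filtration route is the standard one and is broadly sound, but one step in your argument for (1) is incomplete as written. From the vanishing of $\det|_{V_{[h+rs]}}$ you extract a minimal-weight radical vector and declare its weight to be $h+rs$; however, if $h_{r,s}(t) = h_{r',s'}(t)$ for some $(r',s')$ with $r's' < rs$, the determinant already vanishes at level $r's'$, and your minimal-weight radical vector may sit there instead. The clean fix is the one you gesture at: for generic $t$ no such coincidence occurs, one gets a singular vector at level $rs$ whose coefficients are polynomial in $(c,h)$ (this is precisely Astashkevich's result, the proposition immediately following in the paper), and then one specializes. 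Your argument for (3) is also essentially a citation of the Feigin--Fuchs structure theory rather than an independent proof, but since the whole proposition is cited this is consistent with the paper's treatment.
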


Of course, any singular vector in $V(c,h)$ may be expressed as a linear combination of Poincar\'{e}--Birkhoff--Witt-ordered monomials in the $L_i$, $i<0$, acting on the \hw{} vector $\one_{c,h}$.  A crucial fact for what follows is that the coefficient of $L_{-1}^N$, is never $0$ (irrespective of the chosen order).  Here, $N$ is the conformal weight of the singular vector minus that of $\one_{c,h}$.

\begin{prop}[\cite{As}]\label{sing}
If $V(c,h)$ has a
singular vector $v$ of conformal weight $h+N$,
then
\begin{align}
v=\sum_{\abs{I}=N}a_{I}(c,h)L_{-I}\one_{c,h},
\end{align}
where $L_{-I}=L_{-i_{1}}\cdots L_{-i_{n}}$ and the sum is over sequences $I=\{i_1, \dots, i_n\}$ of ordered $n$-tuples $i_1\ge \dots\ge i_n$ with $\abs{I}=i_1+\dots+i_n=N$. Moreover, the
coefficients $a_{I}(c,h)$ depend polynomially on $c$ and $h$ and the coefficient $a_{\{1,\dots,1\}}(c,h)$ of $L_{-1}^{N}$ may be chosen to be $1$.
\end{prop}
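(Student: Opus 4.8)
The plan is to separate the three assertions. The first — that $v$ is a $\CC$-linear combination of the Poincar\'e--Birkhoff--Witt monomials $L_{-I}\one_{c,h}$ with $\abs{I}=N$ — is immediate: by PBW these monomials form a basis of $V(c,h)$, and a singular vector of conformal weight $h+N$ lies in the weight space $V(c,h)_{h+N}$, which is spanned by exactly those $L_{-I}\one_{c,h}$. The content is therefore in the polynomial dependence of the $a_I$ and in the nonvanishing of $a_{\{1,\dots,1\}}$. Both will follow from a single observation about the associated graded of $\uea{\virn}$ for the filtration by \emph{length} (the number of $L_{-i}$ factors), for which $L_{-1}^N$ is the unique monomial of maximal length $N$ in weight $N$. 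On $\gr\uea{\virn}\cong\CC[L_{-1},L_{-2},\dots]$, left multiplication by $L_n$ ($n\ge1$) induces, through its length-preserving part, a weight-lowering derivation $D_n$ determined by $D_n(L_{-j})=(n+j)L_{-(j-n)}$ for $j>n$ and $D_n(L_{-j})=0$ for $1\le j\le n$.

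The key lemma is that $\bigcap_{n\ge1}\ker D_n=\CC[L_{-1}]$, and this is short: if a nonzero homogeneous $f$ lies in the common kernel and $p\ge2$ is the largest index with $L_{-p}$ occurring in $f$, then, since $D_{p-1}(L_{-j})=0$ for all $j\le p-1$ and no $L_{-j}$ with $j>p$ occurs, one computes $D_{p-1}f=(2p-1)L_{-1}\,\partial_{L_{-p}}f$; as $\CC[L_{-1},L_{-2},\dots]$ is an integral domain this forces $\partial_{L_{-p}}f=0$, a contradiction, so $f$ involves only $L_{-1}$. Granting the lemma, $a_{\{1,\dots,1\}}\ne0$ follows quickly. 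If $v$ is singular then $L_n v=0$ for all $n\ge1$; since no part of $v$ has length exceeding that of its top-length part $\ov v$, the top-length part of $L_n v$ equals $D_n\ov v$, whence $D_n\ov v=0$ for every $n$. Thus $\ov v\in\CC[L_{-1}]$ is a multiple of $L_{-1}^N$; as $v\ne0$ its top-length part is nonzero, so the maximal length is $N$ and the coefficient of $L_{-1}^N$ is nonzero and may be normalized to $1$.

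For the polynomial dependence I would set up a recursion on length. Writing $v=\sum_m v_m$ by length with $v_N=L_{-1}^N\one_{c,h}$ fixed, the length-$m$ component of the equations $L_n v=0$ reads $A_n(v_m)=-\,(\text{length-}m\text{ part of }L_n(v_{m+1}+\dots+v_N))$, where $A_n$ is the numerical (that is, $c,h$-independent) operator induced by $D_n$, while the right-hand side depends polynomially on $c$ and $h$ through the length-lowering parts of the $L_n$, which yield the central charge $c$ and the $L_0$-eigenvalues $h+(\text{weight})$. By the lemma the combined map $(A_n)_{n\ge1}$ is injective on the length-$m$, weight-$N$ space for every $m<N$, so $v_m$ is recovered by inverting a fixed rational-coefficient linear map applied to a polynomial right-hand side; inductively this exhibits every $a_I$ as a polynomial in $c$ and $h$.

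The main obstacle is precisely this polynomial dependence, namely the consistency of the over-determined recursion: the system $\{L_n v=0\}$ has more equations than unknowns, so one must check that the right-hand sides always lie in the image of the injective numerical map, rather than merely choosing a square invertible subsystem. Consistency at a point where a genuine singular vector exists is guaranteed by hypothesis together with the uniqueness in \cref{thm:vermared}(3); but organizing the recursion so that the coefficients emerge as honest polynomials in $c$ and $h$, uniformly on the locus where singular vectors occur and with controlled denominators, is the delicate step, and this is where the careful bookkeeping of \cite{As} enters.
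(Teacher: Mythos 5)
Your argument is correct, and it is worth noting at the outset that the paper itself offers no proof of \cref{sing} --- it is quoted verbatim from \cite{As} --- so the comparison here is with that reference rather than with anything in-paper. Your route is self-contained and clean: the length filtration on $\uea{\virn}$, the induced derivations $D_n$ on $\gr\uea{\virn}\cong\CC[L_{-1},L_{-2},\dots]$ (the formula $D_n(L_{-j})=(n+j)L_{-(j-n)}$ is right, since $[L_n,L_{-j}]=(n+j)L_{n-j}$ and the cases $j\le n$ produce only scalars or raising operators, hence strictly shorter terms), and the kernel lemma via $D_{p-1}f=(2p-1)L_{-1}\,\partial_{L_{-p}}f$ are all sound. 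The nonvanishing of $a_{\{1,\dots,1\}}$ then follows exactly as you say, since the length-$N$, weight-$N$ subspace is spanned by $L_{-1}^N$ alone, forcing the top-length part of $v$ to be a nonzero multiple of $L_{-1}^N\one_{c,h}$. This is a genuinely independent proof of the statement the paper imports, and arguably lighter than the explicit bookkeeping in \cite{As}.

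Your closing paragraph, however, undersells your own argument: there is no remaining delicate step. The maps $\Phi_m=(A_n)_{n\ge1}$ on the length-$m$, weight-$N$ space are fixed integer matrices independent of $(c,h)$ (only $n\le N$ contribute), injective for $m<N$ by your lemma; fix once and for all a rational left inverse $B_m$, e.g.\ $(\Phi_m^{\mathsf{T}}\Phi_m)^{-1}\Phi_m^{\mathsf{T}}$. The structure constants of the $L_n$-action on PBW monomials lie in $\ZZ[c,h]$ (the only non-integers are the $L_0$-eigenvalues $h+\textup{weight}$ and the central term, both polynomial), so the right-hand sides of your recursion are polynomial in $(c,h)$, and $v_m:=B_m(\textup{RHS}_m)$ defines the $a_I(c,h)$ as honest polynomials by downward induction, with denominators controlled by the fixed matrices $B_m$. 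Consistency of the overdetermined system is never needed off the locus where a singular vector exists: on that locus the genuine singular vector, normalized via your part (3) and well defined by \cref{thm:vermared}(3), satisfies $\Phi_m(w_m)=\textup{RHS}_m$, and applying $B_m$ gives $w_m=v_m(c,h)$ by the same downward induction. So the proof you outline is already complete as stated, and the deferral to \cite{As} is unnecessary.
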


\begin{nota}\label{c}
For $c = c(t) \in \CC$, set
\begin{equation}
H_c = \left\{h \mid V(c,h)\ \text{is reducible}\right\}.
\end{equation}
By \cref{thm:vermared}, we have $H_c = \{h_{r,s}(t) \mid r,s \in \ZZ_{\ge1}\}$.
For brevity, we shall also denote the Verma module $V(c(t),h_{r,s}(t))$ and its simple quotient by $V_{r,s}$
and $L_{r,s}$, respectively.
\end{nota}
\noindent The set $H_c$ of conformal weights is often referred to as the \emph{extended Kac table} because the original Kac table of the Virasoro minimal models corresponds to the subset of $h_{r,s}(t)$ with $t=\frac{p}{q}$ ($p,q \in \ZZ_{\ge2}$ and $\gcd\{p,q\}=1$), $r=1,\dots,p-1$ and $s=1,\dots,q-1$.  This subset consists precisely of the conformal weights of the simple $L(c(t),0)$-modules \cite{W,RiW2}.

The embedding structure of Virasoro Verma modules
is also due to Feigin and Fuchs.  A convenient summary appears in \cite[Ch.~5]{IK}.

\begin{theorem}[\cite{FF}]\label{exhaustiveembedding}
The embedding structures of the reducible Verma $\vir$-modules $V_{r,s}$, where $r,s \in \ZZ_{\ge1}$, are as follows: 
\begin{enumerate}
\item \label{minmodembed} If $t = \frac{q}{p}$, for $p, q \in \ZZ_{\ge 1}$ and $\gcd\{p, q\} = 1$, then there are two possible ``shapes'' for the embedding diagrams (see \cite{IK} for further details):
\noindent If $r$ is a multiple of $p$ or $s$ is a multiple of $q$, then one has the embedding chain
\begin{equation}
V_{r,s} \longleftarrow \bullet \longleftarrow \bullet \longleftarrow \bullet \longleftarrow \bullet \longleftarrow \cdots.
\end{equation}
Otherwise, the embedding diagram is as follows:
\begin{equation}
\begin{tikzpicture}[->,>=latex,scale=1.4]
\node (b0) at (-0.5,1/2){$V_{r,s}$};
\node (b1) at (1, 0){$\bullet$};
\node (a1) at (1, 1){$\bullet$};
\node (b2) at (2.5, 0){$\bullet$};
\node (a2) at (2.5, 1){$\bullet$};
\node (b3) at (4, 0){$\bullet$};
\node (a3) at (4, 1){$\bullet$};
\node (b4) at (5.5, 0){$\bullet$};
\node (a4) at (5.5, 1){$\bullet$};
\node (b5) at (7, 0){$\cdots$};
\node (a5) at (7, 1){$\cdots$};
\draw[] (b1) -- node[left]{} (b0);
\draw[] (a1) -- node[left]{} (b0);
\draw[] (b2) -- node[left]{} (b1);
\draw[] (b2) -- node[left]{} (a1);
\draw[] (a2) -- node[left]{} (b1);
\draw[] (a2) -- node[left]{} (a1);
\draw[] (b3) -- node[left]{} (b2);
\draw[] (b3) -- node[left]{} (a2);
\draw[] (a3) -- node[left]{} (b2);
\draw[] (a3) -- node[left]{} (a2);
\draw[] (b4) -- node[left]{} (b3);
\draw[] (b4) -- node[left]{} (a3);
\draw[] (a4) -- node[left]{} (b3);
\draw[] (a4) -- node[left]{} (a3);
\draw[] (b5) -- node[left]{} (b4);
\draw[] (b5) -- node[left]{} (a4);
\draw[] (a5) -- node[left]{} (b4);
\draw[] (a5) -- node[left]{} (a4);
\end{tikzpicture}.
\end{equation}
\item If $t = -\frac{q}{p}$ for $p, q \in \ZZ_{\ge 1}$ and $\gcd\{p, q\} = 1$, then there are again two possible ``shapes'',
similar to those in \ref{minmodembed}
except that the diagrams are now finite (again the details may be found in \cite{IK}):

\noindent If $r$ is a multiple of $p$ or $s$ is a multiple of $q$, then one has the embedding chain
\begin{equation}
{V_{r,s}} \longleftarrow \bullet \longleftarrow \bullet \longleftarrow \bullet \longleftarrow \cdots \longleftarrow \bullet\ .
\end{equation}
Otherwise, the embedding diagram is as follows.
\begin{equation}
\begin{tikzpicture}[->,>=latex,scale=1.4]
\node (b0) at (-0.5,1/2){$V_{r,s}$};
\node (b1) at (1, 0){$\bullet$};
\node (a1) at (1, 1){$\bullet$};
\node (b2) at (2.5, 0){$\bullet$};
\node (a2) at (2.5, 1){$\bullet$};
\node (bm) at (4,0){$\cdots$};
\node (am) at (4,1) {$\cdots$};
\node (b3) at (5.5, 0){$\bullet$};
\node (a3) at (5.5, 1){$\bullet$};
\node (b4) at (7, 0){$\bullet$};
\node (a4) at (7, 1){$\bullet$};
\node (a5) at (8.5, 0.5){$\bullet$};
\draw[] (b1) -- node[left]{} (b0);
\draw[] (a1) -- node[left]{} (b0);
\draw[] (b2) -- node[left]{} (b1);
\draw[] (b2) -- node[left]{} (a1);
\draw[] (a2) -- node[left]{} (b1);
\draw[] (a2) -- node[left]{} (a1);
\draw[] (bm) -- node[left]{} (b2);
\draw[] (bm) -- node[left]{} (a2);
\draw[] (am) -- node[left]{} (b2);
\draw[] (am) -- node[left]{} (a2);
\draw[] (b3) -- node[left]{} (bm);
\draw[] (b3) -- node[left]{} (am);
\draw[] (a3) -- node[left]{} (bm);
\draw[] (a3) -- node[left]{} (am);
\draw[] (b4) -- node[left]{} (b3);
\draw[] (a4) -- node[left]{} (b3);
\draw[] (b4) -- node[left]{} (a3);
\draw[] (a4) -- node[left]{} (a3);
\draw[] (a5) -- node[left]{} (b4);
\draw[] (a5) -- node[left]{} (a4);
\end{tikzpicture}.
\end{equation}
\item If $t \notin \QQ$, then the embedding diagram is $V_{r,s} \longleftarrow V_{-r, s}$. 
\end{enumerate}

\end{theorem}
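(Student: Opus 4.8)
\emph{Strategy.} Since the statement is classical, the plan is to recover the Feigin--Fuchs diagrams from two ingredients: the localization of singular vectors via the Kac determinant, and a numerical analysis of the weights $h_{r,s}(t)$. The bridge between them is that embeddings of Verma modules are the same data as singular vectors. Indeed, a singular vector $v$ of weight $h+N$ in $V(c,h)$ is a highest-weight vector, so it induces a $\vir$-map $V(c,h+N)\to V(c,h)$ with $\one_{c,h+N}\mapsto v$; writing $v=P\one_{c,h}$ with $0\neq P\in\uea{\virn}$ and using that $\uea{\virn}$ is a domain over which $V(c,h)=\uea{\virn}\one_{c,h}$ is free of rank one (with the coefficient of $L_{-1}^N$ nonzero, \cref{sing}), this map is injective. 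Hence the arrows in the diagram of $V_{r,s}$ are exactly the singular vectors located by \cref{thm:vermared}, and the theorem reduces to understanding, for each $t$, the coincidences $h_{r_1,s_1}(t)=h_{r_2,s_2}(t)$ and the additive linkage $h\rightsquigarrow h+rs$ inside $\{h_{r,s}(t)\}$.

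\emph{The generic case (3).} The computation that settles $t\notin\QQ$ is elementary. Clearing denominators in $h_{r_1,s_1}(t)=h_{r_2,s_2}(t)$ gives
\begin{equation}
(r_1^2-r_2^2)\,t^2-2(r_1 s_1-r_2 s_2)\,t+(s_1^2-s_2^2)=0,
\end{equation}
whose discriminant collapses to the perfect square $4(r_1 s_2-r_2 s_1)^2$. Thus every root $t$ is rational, so for $t\notin\QQ$ the equality of weights forces $r_1^2=r_2^2$, $s_1^2=s_2^2$ and $r_1 s_1=r_2 s_2$ at once. Applied to positive pairs this gives $(r_1,s_1)=(r_2,s_2)$, so the level-$N$ Kac determinant, which up to a nonzero scalar equals
\begin{equation}
\prod_{\substack{r,s\in\ZZ_{\ge 1}\\ rs\le N}}\bigl(h-h_{r,s}(t)\bigr)^{p(N-rs)}
\end{equation}
(with $p$ the partition function), degenerates on $V_{r,s}$ for the unique positive pair $(r,s)$, yielding by the uniqueness in \cref{thm:vermared} a single singular vector at level $rs$. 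Since a direct check gives $h_{r,s}(t)+rs=h_{-r,s}(t)$, it generates a copy of $V_{-r,s}$. Running the same forcing with the pair $(-r,s)$ would require $-rs=r_2 s_2>0$, which is impossible; hence $V_{-r,s}$ is irreducible and the chain terminates at $V_{r,s}\longleftarrow V_{-r,s}$.

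\emph{The rational cases and the main obstacle.} When $t=\pm q/p$ the quadratic above has $t$ as a root for infinitely many index pairs, so the weights $h_{r,s}(t)$ coincide in abundance and the singular vectors nest into the two-row patterns displayed in cases (1) and (2). This is the genuinely hard part. Following Feigin and Fuchs, I would, for fixed rational $t$, determine all coincidences $h_{r_1,s_1}(t)=h_{r_2,s_2}(t)$ from the discriminant identity and all linkages $h\rightsquigarrow h+rs$ from the factored Kac determinant, assemble these into the poset of singular vectors of $V_{r,s}$, and then prove that this poset is exactly the claimed diagram: infinite when $t>0$, finite when $t<0$, and degenerating to a single chain precisely when $p\mid r$ or $q\mid s$. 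The two steps that require real work are checking that each predicted singular vector is nonzero, so that a genuine embedding rather than a bare weight coincidence occurs, and proving that the listed submodules are exhaustive; the latter is the principal obstacle. For these I would lean on the detailed treatment in \cite[Ch.~5]{IK} rather than reproduce the full Feigin--Fuchs argument.
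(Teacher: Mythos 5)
The paper does not prove this theorem at all: it is quoted as a classical result of Feigin and Fuchs \cite{FF}, with \cite[Ch.~5]{IK} cited for the details, so there is no internal proof to match yours against. Your proposal is therefore honest about its status: it reduces embeddings to singular vectors (correctly, via \cref{sing} and the fact that $\uea{\virn}$ is a domain acting freely on a Verma module, so the induced map is injective), it gives a genuine self-contained argument for case (3), and it explicitly defers the rational cases (1)--(2) to \cite{FF,IK} --- which is exactly what the paper does for the whole theorem. Your generic-case computation is correct and is, in fact, the same discriminant calculation the paper itself invokes later as \eqref{eq:confwtseq} in \cref{sec:fusion}: the discriminant of $(r_1^2-r_2^2)t^2 - 2(r_1s_1-r_2s_2)t + (s_1^2-s_2^2)$ collapses to $4(r_1s_2-r_2s_1)^2$, so weight coincidences force $t\in\QQ$ unless the index pairs agree up to global sign, and $h_{r,s}(t)+rs = h_{-r,s}(t)$ identifies the embedded submodule as $V_{-r,s}$, whose irreducibility follows since $(-r,s)$ admits no positive coincidence partner.

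One step in case (3) deserves to be tightened. Uniqueness of the singular vector \emph{at each fixed level} (\cref{thm:vermared}(3)) plus the Kac determinant does not by itself exclude additional singular vectors at levels $N > rs$, which would add arrows to the diagram; a singular vector at level $N$ only witnesses vanishing of the determinant there, and for $h=h_{r,s}(t)$ the determinant vanishes at \emph{every} level $N\ge rs$. The standard fix is a counting argument: the order of vanishing of the level-$N$ determinant at $h_{r,s}(t)$ is exactly $p(N-rs)$, and since the corank of a polynomial family of matrices is bounded above by the order of vanishing of its determinant (Smith normal form over $\CC[h]$ localized at $h_{r,s}$), the radical $J$ of the contravariant form satisfies $\dim J_{[h+N]} \le p(N-rs)$. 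As $J$ already contains $\uea{\virn}v \cong V_{-r,s}$, whose level-$N$ dimension is $p(N-rs)$, equality holds, so $J = \uea{\virn}v$ and the diagram $V_{r,s} \longleftarrow V_{-r,s}$ is exhaustive. (Equivalently, one runs the Jantzen filtration character identity.) With that supplement your case (3) is complete; for cases (1)--(2) your proposal remains a programme rather than a proof, but since the paper likewise cites rather than proves them, this is an appropriate resolution.
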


\begin{cor} \label{cor:embeddings}
	It follows from these embedding structures that:
	\begin{enumerate}
		\item Every non-zero submodule of a Verma $\vir$-module is either a Verma module itself or the sum of two Verma modules. \label{it:vermasubmodules}
		\item Every non-Verma \hw{} $\vir$-module has finite length. \label{it:hwfinlen}
		\item Every Verma module $V(c,h)$ in the embedding diagram of a reducible Verma $\vir$-module satisfies $h \in H_c$, with the exception of the socle which, if it exists, has the form $V(c,h')$ for some $h' \notin H_c$.
		\item $M(c,0) = L(c,0)$ unless the central charge satisfies $c = c(\frac{p}{q}) = c_{p, q}$, where $p, q \in \ZZ_{\ge 2}$ and $\gcd\{p,q\} = 1$ (see \eqref{eq:cminmod}). \label{it:M=L}
	\end{enumerate}
\end{cor}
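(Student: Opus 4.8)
The plan is to read all four assertions directly off the embedding diagrams of \cref{exhaustiveembedding}, using throughout two facts built into those diagrams: every singular vector of a Verma $\vir$-module generates a \emph{sub-Verma module} (each arrow is an injective embedding), and the diagrams are \emph{exhaustive}, so that every singular vector---and hence every sub-Verma module---appears as a node. The one structural statement I would isolate at the outset is that every submodule of $V_{r,s}$ is the sum of the nodes it contains; granting this, I would treat the set of nodes as a poset under inclusion and translate submodules into order ideals, whose shape is then dictated by the three diagram types.

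For (1) I would note that in each type the poset of nodes has width at most two. The chain shapes (in the rational cases) and the two-node diagram $V_{r,s}\leftarrow V_{-r,s}$ (for $t\notin\QQ$) are totally ordered; the remaining generic reducible shape consists of two interleaved chains in which every node lies below \emph{both} nodes of the level immediately above, forcing any antichain to sit within a single level and so to have at most two elements. Consequently the $\subseteq$-maximal nodes contained in a given submodule form an antichain of size at most two, and the submodule is their sum---either a single Verma module or a sum of two. For (2) I would write a non-Verma \hw{} module as a proper quotient $V(c,h)/K$ with $K\neq0$. Since $K$ is $\CC$-graded and lower-bounded it has a lowest nonzero graded piece, every vector of which is singular, so $K$ contains a node $N$ at finite depth. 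Only finitely many nodes lie \emph{above} $N$ (width at most two, finite depth), whence $V(c,h)/N$ has finite length; as $N\subseteq K$, the module $V(c,h)/K$ is a quotient of $V(c,h)/N$ and inherits finite length.

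For (3) I would use that a node $V(c,h)$ is reducible exactly when it contains a proper nonzero sub-Verma module, i.e.\ exactly when it is \emph{not} the minimal node; since reducibility is precisely the condition $h\in H_c$ (\cref{c}), every non-minimal node satisfies $h\in H_c$, while the minimal node---the socle in the finite and irrational diagrams, and absent in the infinite positive-rational diagram---is irreducible and hence of the form $V(c,h')$ with $h'\notin H_c$. For (4) I would specialise to $V_{1,1}=V(c,0)$ and use $M(c,0)=V(c,0)/\jiao{L_{-1}\one_{c,0}}$, where $L_{-1}\one_{c,0}$ is the unique level-$1$ singular vector and so generates the top node of the diagram; thus $M(c,0)=L(c,0)$ precisely when that node is already the maximal proper submodule, i.e.\ when $V_{1,1}$ has a \emph{single} maximal sub-Verma module. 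Reading the dichotomy of \cref{exhaustiveembedding} at $r=s=1$, the two-maximal-submodule (``two-row'') shape occurs exactly for positive rational $t=q/p$ with $p,q\ge 2$, which is the locus $c=c_{p,q}$; in every other case the top node is maximal and $M(c,0)=L(c,0)$.

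The main obstacle I anticipate is justifying the structural input used silently above: that every submodule of $V_{r,s}$ is genuinely the sum of the nodes it contains, equivalently that $U\mapsto\{\,\text{nodes}\subseteq U\,\}$ is an isomorphism onto the ideals of the node poset. This amounts to excluding ``accidental'' singular vectors produced inside a sum of two sub-Verma modules but not themselves nodes, and here the exhaustiveness of \cref{exhaustiveembedding} together with the weight-uniqueness of \cref{thm:vermared}\,(3) is exactly what is needed. A secondary care point is the specialisation in (4): one must check that the nominal two-row condition of \cref{exhaustiveembedding} degenerates correctly at $r=s=1$ in the negative-rational case, where a direct count shows that $V(c,0)$ carries only the single singular vector $L_{-1}\one_{c,0}$, so its top node is already the maximal proper submodule and $M(c,0)=L(c,0)$---consistent with $c\neq c_{p,q}$ there.
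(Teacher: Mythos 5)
The paper offers no written proof of \cref{cor:embeddings}: the four items are asserted as immediate consequences of \cref{exhaustiveembedding}, with the detailed submodule structure delegated to \cite{FF} and the summary in \cite[Ch.~5]{IK}. Your proposal is that same derivation, spelled out, and most of it is correct: the poset analysis (width at most two in all three diagram types, since in the braided shapes every node embeds into both nodes of the level above, so distinct levels are totally ordered against each other) correctly yields (1) and (2) once your structural lemma is granted; the socle analysis in (3) is right (a minimal node, if reducible, would contain a singular vector of the ambient Verma module that exhaustiveness forces to be a node strictly below it); and the degeneration check at $r=s=1$ in (4) is a genuine subtlety you were right to resolve by direct computation --- for $t<0$ one has $h_{r,s}(t)\le 0$ with equality only at $(r,s)=(1,1)$, so $V(c,0)$ has the single non-trivial singular vector $L_{-1}\one_{c,0}$ and $\langle L_{-1}\one_{c,0}\rangle\cong V(c,1)$ is irreducible, even though the nominal dichotomy of \cref{exhaustiveembedding} would suggest a braided shape when $p,q\ge2$.

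The one place your argument falls short of a proof is exactly the point you flagged, and your proposed resolution of it is an overstatement. Exhaustiveness of \cref{exhaustiveembedding} together with the weight-uniqueness of \cref{thm:vermared}(3) classifies the \emph{singular vectors} of $V_{r,s}$, hence its Verma submodules; it does not exclude a submodule that is not generated by singular vectors. Concretely, if $S$ is the sum of all nodes contained in a submodule $U$ and $U\neq S$, then a homogeneous vector of minimal conformal weight in $U\setminus S$ is annihilated by all $L_n$, $n>0$, only \emph{modulo} $S$ --- a subsingular vector --- and no amount of information about the singular vectors of $V_{r,s}$ itself rules this out (subsingular phenomena do occur in \hw{} theory generally, so the worry is not vacuous). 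The absence of subsingular vectors in Virasoro Verma modules, equivalently the statement that every submodule is a sum of at most two Verma submodules, is itself a theorem of Feigin and Fuchs --- it is essentially item (1) of the corollary --- so attempting to derive it from the embedding diagrams alone risks circularity. The correct move, and the one the paper implicitly makes, is to cite the submodule classification of \cite{FF}, \cite[Ch.~5]{IK} directly; with that lemma in hand, the rest of your argument goes through as written (in particular, in (2) the submodules of $V(c,h)/N$ then correspond to order ideals in the finite poset of nodes not contained in $N$, so the submodule lattice is finite and the quotient has finite length).
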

\noindent In light of \cref{cor:embeddings}\ref{it:M=L} and the fact that the $L(c_{p,q},0)$-modules ($p, q \in \ZZ_{\ge 2}$ and $\gcd\{p,q\} = 1$) form a modular tensor category \cite{H4,H5}, we shall restrict our considerations in what follows to modules of the universal Virasoro vertex operator algebras $M(c,0)$ (for arbitrary $c \in \CC$).

\begin{remark}
The representations of $M(c,0)$
have been investigated in both the mathematics and physics literature. We list some relevant work here.
\begin{enumerate}
\item
The representation theory of $M(c,0)$ has been explored in detail by physicists under the moniker ``logarithmic minimal models'' \cite{PRZ,RS,MRR}.  In particular, fusion products were studied \cite{GKind,EF,MR} in order to determine the types of non-semisimple modules that appeared at central charges of the form $c_{p,q}$, $p,q \in \ZZ_{\ge1}$, especially the so-called \emph{staggered modules} \cite{R,KRi} that arise in logarithmic conformal field theories \cite{RoS,G,F2,Ga,CR}.
\item
The famous triplet algebras $\mathcal W(p)$ \cite{Ka, F1, GaK, AM1, NT, TW, FGST1} and singlet algebras $\mathcal M(p)$ \cite{A1, AM2, CM, RiW} have been extensively studied as extensions of $M(c_{1,p},0)$.  The $\mathcal B(p)$-algebras \cite{A2, CRiW, ACKR} are similarly extensions of $M(c_{1,p},0)$ times a rank one Heisenberg vertex operator algebra.
\item
The representations and fusion rules of the Virasoro algebra of central charge $c_{1,1} = 1$
have been studied in \cite{M}. In \cite{Mc}, these fusion rules were used to prove that the semisimple full subcategory of $M(1,0)$-modules generated by the $L(1, \frac{n^2}{4})$, $n \in \NN$, is braided-tensor and tensor equivalent to a modification of the category of finite-dimensional modules of $\mathfrak{sl}_2$ involving $3$-cocycle on $\ZZ/2\ZZ$.
\item
For the case $t \notin \QQ$, the category of modules generated by the $V_{1,s}$ with $s \in \ZZ_{\ge1}$ was studied in \cite{FZ2} and the fusion rules of this category were determined.  These rules will be crucial to our results, see \cref{sec:fusion} below.
\end{enumerate}
\end{remark}

The vertex operator algebra $M(c, 0)$ is neither rational nor $C_2$-cofinite because it has infinitely many inequivalent simple
modules. It is therefore natural to study the $C_1$-cofiniteness
of an $M(c, 0)$-module $W$.  One can check that
\begin{equation} \label{eq:C1Vir}
	C_1(W) = \sum_{n=2}^{\infty} L_{-n} W,
\end{equation}
hence the $C_1$-quotient of a \hw{} $M(c,0)$-module is spanned by the powers of $L_{-1}$ acting on the cyclic \hw{} vector.  We therefore have the following
consequences of \cref{sing,cor:embeddings}.
\begin{cor}\label{noc1}
\leavevmode
\begin{enumerate}
	\item A \hw{} $M(c,0)$-module is $C_1$-cofinite if and only if it is not isomorphic to a Verma module.  In particular, $L(c, h)$ is $C_1$-cofinite if and only if $h \in H_c$. \label{it:c1notverma}
	\item A $C_1$-cofinite \hw{} $M(c,0)$-module
	has finite length. \label{hig}
	\item The composition factors of a $C_1$-cofinite \hw{} $M(c,0)$-module are of the form $L(c,h)$ with $h \in H_c$. \label{it:c1compfacts}
\end{enumerate}
\end{cor}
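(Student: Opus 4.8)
The plan is to dispatch the three assertions in order, using the description of the $C_1$-quotient in \eqref{eq:C1Vir} for the first, the finite-length statement \cref{cor:embeddings}\ref{it:hwfinlen} for the second, and the Feigin--Fuchs embedding diagrams for the third. For assertion~\ref{it:c1notverma}, recall from the discussion after \eqref{eq:C1Vir} that if $W$ is a \hw{} module with cyclic vector $v$, then $W/C_1(W)$ is spanned by the images of $L_{-1}^n v$ for $n \ge 0$. When $W = V(c,h)$ is a Verma module these images are linearly independent: the Poincar\'{e}--Birkhoff--Witt basis shows that the $L_{-1}^n v$ are independent in $V(c,h)$, while $C_1(V(c,h))$ is spanned by the PBW monomials containing at least one factor $L_{-k}$ with $k \ge 2$, so no nonzero combination of the $L_{-1}^n v$ lies in it. Hence $W/C_1(W)$ is infinite-dimensional and $W$ is not $C_1$-cofinite.

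Conversely, if $W$ is a \hw{} module not isomorphic to a Verma module, write $W = V(c,h)/J$ with $J \neq 0$. A nonzero vector of minimal conformal weight in the graded submodule $J$ is singular, necessarily of weight $h+N$ with $N \ge 1$, and by \cref{sing} it is, up to a nonzero scalar, $\sum_{\abs{I} = N} a_I(c,h) L_{-I}\one_{c,h}$ with $a_{\{1,\dots,1\}} = 1$. Since this vector maps to $0$ in $W$, reducing modulo $C_1(W)$ annihilates every term except the $L_{-1}^N$ one and yields $L_{-1}^N v \in C_1(W)$. A one-line bracket computation gives $L_{-1}C_1(W) \subseteq C_1(W)$, so $L_{-1}^k v \in C_1(W)$ for all $k \ge N$ and $W/C_1(W)$ is finite-dimensional; thus $W$ is $C_1$-cofinite. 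The ``in particular'' is then immediate, since $L(c,h) = V(c,h)$ exactly when $V(c,h)$ is irreducible, i.e.\ when $h \notin H_c$. Assertion~\ref{hig} follows at once: by assertion~\ref{it:c1notverma} a $C_1$-cofinite \hw{} module is not a Verma module, so it has finite length by \cref{cor:embeddings}\ref{it:hwfinlen}.

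For assertion~\ref{it:c1compfacts}, write again $W = V(c,h)/J$ with $J \neq 0$; its top composition factor is $L(c,h)$, and $h \in H_c$ by assertion~\ref{it:c1notverma}. Every other composition factor of the finite-length module $W$ is the simple quotient $L(c,h_i)$ of a Verma module $V(c,h_i)$ appearing in the embedding diagram of $V(c,h)$, and by \cref{cor:embeddings} each such $h_i$ lies in $H_c$ with the single possible exception of the socle $V(c,h')$ with $h' \notin H_c$ (which, being irreducible, coincides with $L(c,h')$). It remains to rule out this socle as a factor of $W$. Since $J$ is a nonzero graded submodule it contains a singular vector, hence a Verma submodule, and the embedding structure forces any such submodule to contain the unique minimal submodule of $V(c,h)$, namely the socle, whenever the latter exists. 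Therefore $V(c,h') \subseteq J$ is killed in the quotient and cannot contribute a composition factor, so all composition factors of $W$ have weights in $H_c$.

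The main obstacle is this last assertion. Whereas \ref{it:c1notverma} and \ref{hig} reduce quickly to the $C_1$-quotient computation and to \cref{cor:embeddings}\ref{it:hwfinlen}, assertion~\ref{it:c1compfacts} cannot be obtained from a general hereditary property of $C_1$-cofiniteness, since that condition need not pass to arbitrary subquotients. Instead one must use the explicit Feigin--Fuchs embedding structure both to locate the unique potentially non-$C_1$-cofinite factor (the irreducible-Verma socle) and to verify that every nonzero submodule engulfs it, so that it disappears in any proper \hw{} quotient.
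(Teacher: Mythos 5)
Your proof is correct and follows essentially the same route as the paper, which presents \cref{noc1} as a direct consequence of \eqref{eq:C1Vir}, \cref{sing} and \cref{cor:embeddings}: you use the unit coefficient of $L_{-1}^{N}$ in the vanishing singular vector to reduce the $C_1$-quotient to $\Span\{L_{-1}^{k}v : k < N\}$, invoke \cref{cor:embeddings}\ref{it:hwfinlen} for finite length, and use the embedding diagrams to exclude the simple-Verma socle from the composition factors. The details you supply beyond the paper's implicit argument --- the $L_{-1}$-stability of $C_1(W)$ and the observation that every nonzero submodule contains the socle when it exists --- are precisely the intended fillings-in, so there is nothing to correct.
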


\section{Equivalence of two categories} \label{sec:equiv}
Recall that $\cofcat$ denotes
the category of lower-bounded $C_1$-cofinite generalized $M(c,0)$-modules
and that
$\ocfin$ denotes
the category of finite-length $M(c,0)$-modules with
composition factors $L(c, h)$ for $h \in H_c$. In this section, we will prove that these two categories are the same.

From \cref{c1cofinite}\ref{inclusion} and \cref{noc1}, we have $\ocfin \subseteq \cC_1$. We therefore only need to prove the reverse inclusion in what follows.

\subsection{The reverse inclusion}
\begin{lemma}\label{easylemma}
Let $V$ be a vertex operator algebra and let
$W$ be a generalized $V$-module.
\begin{enumerate}
	\item Suppose that $0 \neq w \in W$ has $L_0$-eigenvalue $h \in \CC$ and that $h-n$ is not an eigenvalue of $L_0$ for any $n \in \ZZ_{\ge1}$.  Then, $w \notin C_1(W)$. \label{easy1}
	\item If $W$ is a lower-bounded generalized $V$-module satisfying $W=C_1(W)$, then $W=0$. \label{easy2}
\end{enumerate}
\end{lemma}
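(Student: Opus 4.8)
The plan is to prove part \ref{easy1} by a direct weight-grading argument, and then derive part \ref{easy2} from it by an elementary induction on the lowest nonzero weight space.

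For \ref{easy1}, recall from \eqref{eq:C1Vir} that $C_1(W) = \sum_{n \ge 2} L_{-n} W$. Suppose, for contradiction, that $w \in C_1(W)$, so that $w = \sum_{n=2}^{M} L_{-n} u_n$ for some $u_n \in W$ and some finite $M$. Since $w$ has $L_0$-eigenvalue $h$ (and more generally, since everything decomposes over generalized $L_0$-eigenspaces), I would first reduce to the case where each $u_n$ lies in the generalized $L_0$-eigenspace $W_{[h-n]}$: project the identity $w = \sum_n L_{-n} u_n$ onto $W_{[h]}$ and use that $L_{-n}$ raises the generalized $L_0$-eigenvalue by $n$ (which follows from $[L_0, L_{-n}] = n L_{-n}$). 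But $h - n$ is not an $L_0$-eigenvalue for any $n \ge 1$ by hypothesis, so $W_{[h-n]} = 0$ for all $n \ge 2$, forcing each relevant $u_n = 0$ and hence $w = 0$, a contradiction. Thus $w \notin C_1(W)$.

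For \ref{easy2}, suppose $W$ is lower-bounded with $W = C_1(W)$ but $W \neq 0$. Because $W$ is lower-bounded and $\CC$-graded by generalized $L_0$-eigenspaces, the set of $n \in \CC$ with $W_{[n]} \neq 0$ has the property that, fixing representatives modulo $\ZZ$, on each coset there is a minimal element; pick any nonzero $W_{[h]}$ with $h$ minimal in its coset, so that $h - n$ is not an $L_0$-eigenvalue for any $n \in \ZZ_{\ge 1}$. Then any nonzero $w \in W_{[h]}$ satisfies the hypothesis of \ref{easy1}, so $w \notin C_1(W) = W$, contradicting $w \in W$. Hence $W = 0$.

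The only subtlety — and the step I would be most careful about — is the reduction in \ref{easy1} to weight-homogeneous $u_n$: one must work with \emph{generalized} eigenspaces rather than honest eigenspaces, but since $L_{-n}$ maps the generalized $h'$-eigenspace into the generalized $(h'+n)$-eigenspace and $W$ is the direct sum of these, projecting the relation onto $W_{[h]}$ is legitimate and produces $w = \sum_{n \ge 2} L_{-n}(\pi_{[h-n]} u_n)$ with each summand genuinely of generalized weight $h-n$. Everything else is bookkeeping with the grading. There is no real analytic or categorical obstacle here; this lemma is a structural preliminary whose whole point is to later force any $C_1$-cofinite lower-bounded module that is "all $C_1$" to vanish, which is the mechanism behind showing $\cC_1 \subseteq \ocfin$.
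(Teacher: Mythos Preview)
Your proof is correct and follows essentially the same route as the paper: a direct weight-grading contradiction for \ref{easy1}, then for \ref{easy2} choosing a nonzero weight space minimal in its $\ZZ$-coset and applying \ref{easy1}. One cosmetic point: you invoke the Virasoro-specific \eqref{eq:C1Vir} while the lemma is stated for arbitrary $V$, but the paper's own proof does the same (writing $u^i \in M(c,0)$), and the general argument is identical with $L_{-n}$ replaced by $u_{-1}$ for homogeneous $u \in V_+$.
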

\begin{proof}
If $w \in C_1(W)$, then there would exist homogeneous elements $u^i \in M(c,0)$, with $\wt u_i \in \ZZ_{\ge1}$, and $w^i\in W$ such that $w=\sum_{i} u^i_{-1}w^i$. As $w\neq 0$, there exists at least one non-zero term $u^k_{-1}w^k$ in this decomposition. But, $\wt w^k = \wt w - \wt u^k \in h - \ZZ_{\ge1}$, a contradiction.  This proves \ref{easy1}.

For \ref{easy2}, assume that $W \neq 0$.  As $W$ is lower-bounded, there exists an $L_0$-eigenvalue $h \in \CC$ such that $h-n$ is not an eigenvalue of $L_0$ for any $n \in \ZZ_{\ge1}$.  If $w$ is the corresponding eigenvector, then $w \in W = C_1(W)$, contradicting \ref{easy1}.
\end{proof}

\begin{prop}\label{mainprop1}
If $W \in \cC_1$, then
there exists a finite filtration
\begin{equation}\label{filh}
0 = W_0 \subset W_1 \subset \dots \subset W_{n-1} \subset W_{n} = W
\end{equation}
such that
the quotients $W_i/W_{i-1}$,
$i = 1, \dots, n$, are \hw{} $M(c,0)$-modules.
\end{prop}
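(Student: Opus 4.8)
The plan is to build the filtration \eqref{filh} from the bottom up, peeling off highest-weight pieces one at a time, and to control termination using $C_1$-cofiniteness together with lower-boundedness. Since $W \in \cC_1$, the module $W$ is lower-bounded, so there is an $L_0$-eigenvalue $h$ whose real part is minimal in the sense that $h - n$ is not an $L_0$-eigenvalue for any $n \in \ZZ_{\ge 1}$; pick a generalized eigenvector $w$ with this eigenvalue. By \cref{easylemma}\ref{easy1}, $w \notin C_1(W)$, so its image in $W/C_1(W)$ is nonzero. Because $W$ is lower-bounded and $L_{-1}$, $L_{-2}, \dots$ all strictly raise conformal weight, the submodule $U = \uea{\vir} w$ generated by $w$ is again lower-bounded; moreover $w$ generates $U$ as a module over the subalgebra generated by $L_{-1}, L_{-2}, \dots$, so $U/C_1(U)$ is spanned by the images of $L_{-1}^k w$ (here one uses the description \eqref{eq:C1Vir} of $C_1$ for the Virasoro algebra). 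Hence $U$ is generated, over $M(c,0)$, by the single weight space of $w$ together with the $L_{-1}$-orbit — in particular, after passing to a genuine eigenvector and using that $M(c,0) = V(c,0)/\langle L_{-1}\one\rangle$ kills $L_{-1}$ on the vacuum only (not on general vectors), we get that $U$ need not itself be highest-weight, but we can extract a highest-weight submodule.

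More carefully, the key step is: \emph{every nonzero $W \in \cC_1$ contains a nonzero highest-weight submodule.} To see this, take the minimal-weight space $W_{[h]}$ as above; it is finite-dimensional if $W$ is grading-restricted, but in general for lower-bounded $\cC_1$-modules we still have that $W_{[h]}$ is annihilated by all $L_n$, $n \ge 1$ (since $L_n$ lowers weight and no lower weights occur), so $W_{[h]}$ is a module over $\virb$ on which $\underline c$ acts by $c$ and $L_0$ acts with generalized eigenvalue $h$. Choose $0 \neq v \in W_{[h]}$ that is a genuine $L_0$-eigenvector (possible since the generalized eigenspace is nonzero). Then $v$ is a highest-weight vector of weight $(c,h)$, so $\uea{\vir} v$ is a quotient of the Verma module $V(c,h)$, i.e. a highest-weight $M(c,0)$-module (using that every highest-weight $\vir$-module of central charge $c$ is an $M(c,0)$-module, by \cite{FZ1}). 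Set $W_1 = \uea{\vir} v$.

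Now iterate: $W/W_1$ lies in $\cC_1$ by \cref{c1cofinite}\ref{qu} (the $C_1$-cofinite quotient of a $C_1$-cofinite module is $C_1$-cofinite, and lower-boundedness passes to quotients), so if $W/W_1 \neq 0$ it contains a nonzero highest-weight submodule $W_2/W_1$, and so on. This produces an increasing chain $0 = W_0 \subset W_1 \subset W_2 \subset \cdots$ with each $W_i/W_{i-1}$ highest-weight. The remaining obligation — and the step I expect to be the main obstacle — is \textbf{termination}: showing the chain stabilizes at $W$ after finitely many steps. For this I would argue that each highest-weight layer $W_i/W_{i-1}$ contributes at least one dimension to the $C_1$-quotient $W/C_1(W)$, or more precisely that the natural surjections $W/C_1(W) \twoheadrightarrow (W/W_i)/C_1(W/W_i)$ are, at each stage, \emph{strict} quotients. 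Indeed, by \cref{easylemma}\ref{easy1} applied inside $W/W_{i-1}$, the highest-weight vector of $W_i/W_{i-1}$ is not in $C_1(W/W_{i-1})$; unwinding via \cref{c1cofinite}\ref{qu}, this forces $\dim (W/W_i)/C_1(W/W_i) < \dim(W/W_{i-1})/C_1(W/W_{i-1})$, which is finite by hypothesis. Hence the process halts after at most $\dim W/C_1(W)$ steps, at which point $W/W_n$ is a lower-bounded module with $W/W_n = C_1(W/W_n)$, so $W/W_n = 0$ by \cref{easylemma}\ref{easy2}. This gives the desired finite filtration.
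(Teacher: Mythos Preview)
Your argument is correct and follows essentially the same route as the paper's proof: pick an $L_0$-eigenvector of minimal weight (hence a highest-weight vector not in $C_1$), let it generate $W_1$, pass to $W/W_1$, and show the $C_1$-quotient dimension strictly drops at each step so that the process terminates by \cref{easylemma}\ref{easy2}. Your first paragraph is a false start --- the discussion of $U=\uea{\vir}w$ for a \emph{generalized} eigenvector $w$ and the remark that ``$U$ need not itself be highest-weight'' is unnecessary, since you immediately correct this in the second paragraph by choosing a genuine $L_0$-eigenvector; you should simply cut that paragraph.
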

\begin{proof}
Since $W$ is lower-bounded, we can choose an $L_0$-eigenvector $w_1 \in W$ whose eigenvalue $h$ is such that there are no $L_0$-eigenvalues of the form $h-n$, $n\in\ZZ_{\ge1}$.  It follows that $w_1$ is a \hw{} vector; moreover, $w_1 \notin C_1(W)$ by \cref{easylemma}\ref{easy1}.
Denote by $W_1$ the \hw{} submodule of $W$ generated by $w_1$.
From \cref{c1cofinite}\ref{qu},
$W/W_1$ is also a lower-bounded $C_1$-cofinite generalized module.  Moreover, $w_1 \notin C_1(W)$ gives $C_1(W) \subset C_1(W)+W_1$ and hence
\begin{equation}
	\dim \frac{W/W_1}{C_1(W/W_1)} = \dim \frac{W}{C_1(W)+W_1} < \dim \frac{W}{C_1(W)} < \infty.
\end{equation}

The same argument now shows that
$W/W_1$ has a \hw{} vector
\begin{equation} \label{notinthesum}
	\overline{w_2} \notin C_1(W/W_1) = \frac{C_1(W)+W_1}{W_1} \quad \Rightarrow \quad w_2 \notin C_1(W)+W_1,
\end{equation}
where $w_2 \in W$ is any element whose image in $W/W_1$ is $\overline{w_2}$.  Now, $\overline{w_2}$ generates a
\hw{} module $\overline{W_2} \subseteq
W/W_1$.
It follows that there exists $W_2\subseteq W$ such that $W_2/W_1\cong \overline{W_2}$ is \hw{} and $W_1\subset W_2$ gives $C_1(W)+W_1 \subset C_1(W)+W_2$, by \eqref{notinthesum}.  In other words,
we have obtained a sequence of epimorphisms
\begin{align}
W\twoheadrightarrow W/W_1\twoheadrightarrow (W/W_1)/(W_2/W_1)\cong W/ W_2
\end{align}
satisfying $\dim W / (C_1(W)+W_2) < \dim W / (C_1(W)+W_1)$, hence
\begin{equation}
	\dim \frac{W/W_2}{C_1(W/W_2)} < \dim \frac{W/W_1}{C_1(W/W_1)} < \dim \frac{W}{C_1(W)} < \infty.
\end{equation}

Continuing in this manner, we obtain
sequences
\begin{align}
W\twoheadrightarrow W/W_1\twoheadrightarrow W/ W_2 \twoheadrightarrow W/W_3 \twoheadrightarrow \cdots \twoheadrightarrow W/W_{n-1}\twoheadrightarrow W/W_{n}
\end{align}
in which each $W/W_i$ is lower-bounded, $C_1$-cofinite and generalized.  Moreover, the dimensions of the $C_1$-quotients of the $W/W_i$ are strictly decreasing as $i$ increases.  As
$\dim W/C_1(W)$ is finite, there exists $n$ such that this dimension is $0$.
By \cref{easylemma}, we therefore have
$W/W_{n}=0$.
Thus, we have obtained a sequence of submodules
\begin{align}
0\hookrightarrow W_1 \hookrightarrow W_2 \hookrightarrow \cdots \hookrightarrow W_{n-1}\hookrightarrow W_{n} = W
\end{align}
such that $W_{i}/W_{i-1}\cong \overline{W_{i}}$ is a \hw{} $M(c,0)$-module.
\end{proof}

\cref{mainprop1} shows that an arbitrary $W \in \cC_1$ is composed of finitely many \hw{} modules, but this does not guarantee that $W$ has finite length
because the length of one of the \hw{} modules might be infinite.  We therefore need the following stronger result whose proof is deferred until the following \lcnamecref{mainproof}.

\begin{prop}\label{mainprop2}
Let $W \in \cC_1$ have a finite filtration \eqref{filh}
such that
the quotients $W_i/W_{i-1}$, for $i=1,\dots,n$, are \hw{} $M(c,0)$-modules.
Then, each $W_i$, and hence each $W_i/W_{i-1}$, is $C_1$-cofinite.
\end{prop}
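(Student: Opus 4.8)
The plan is to reduce \cref{mainprop2} to the single statement that a module in $\cC_1$ has finite length with composition factors among the $L(c,h)$, $h\in H_c$. Granting this, every \hw{} quotient $W_i/W_{i-1}$ is a finite-length subquotient, hence not isomorphic to a Verma module, hence $C_1$-cofinite by \cref{noc1}\ref{it:c1notverma}; its composition factors are then $C_1$-cofinite as well (\cref{noc1}), so \cref{c1cofinite}\ref{inclusion} shows each $W_i$ is $C_1$-cofinite. To obtain finite length I use the filtration produced in \cref{mainprop1}, whose bottom piece $W_1$ is generated by an $L_0$-eigenvector of minimal weight, and induct on its length $n$: the top quotient $W/W_{n-1}$ is a quotient of $W$, hence $C_1$-cofinite by \cref{c1cofinite}\ref{qu} and of finite length by \cref{noc1}\ref{hig}, while $W/W_1\in\cC_1$ carries the induced \hw{} filtration of length $n-1$, to which the inductive hypothesis applies. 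Everything therefore comes down to the following claim: the minimal-weight \hw{} submodule $W_1$ of $W$ is not a Verma module.

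So suppose $W_1\cong V(c,h)\hookrightarrow U$ (with $U=W$) is a Verma submodule generated by a vector $w$ of minimal weight $h$, i.e. $U_{[h-m]}=0$ for all $m\in\ZZ_{\ge1}$. The first observation is that $L_{-1}$ preserves $C_1(U)=\sum_{k\ge2}L_{-k}U$, since $[L_{-1},L_{-k}]=(k-1)L_{-k-1}$; hence $L_{-1}$ descends to an operator on the finite-dimensional space $U/C_1(U)$, and because it raises the $L_0$-grading it is nilpotent, so $L_{-1}^{M}U\subseteq C_1(U)$ for some $M$. Thus there is a minimal $N\ge1$ with $L_{-1}^{N}w\in C_1(U)$, where $N\ge1$ because minimality of the weight forces $w\notin C_1(U)$. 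Writing $L_{-1}^{N}w=\sum_{k\ge2}L_{-k}u_k$ and using $U_{[h-m]}=0$, only the terms with $2\le k\le N$ survive and $\wt u_k=h+N-k\ge h$.

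To extract a contradiction I pass to the contragredient pairing. On each weight space the pairing of $U$ with $U'$ is perfect, $L_{-k}$ is adjoint to $L_{k}$, and one checks that the annihilator of $C_1(U)$ consists precisely of the functionals killed by all $L_k$ with $k\ge2$. The decisive point is a descent: if $f\in U'$ is a \hw{} functional (so $L_kf=0$ for all $k\ge1$), then $L_{-1}f$ is killed by every $L_k$ with $k\ge2$, hence $L_{-1}f\perp C_1(U)$; pairing against $L_{-1}^{N}w\in C_1(U)$ and using $L_1L_{-1}^{N}w=N(2h+N-1)L_{-1}^{N-1}w$ gives $N(2h+N-1)\langle f,L_{-1}^{N-1}w\rangle=\langle L_{-1}f,L_{-1}^{N}w\rangle=0$. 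At the bottom of the tower this closes immediately: when $N=1$ every $f\in U'_{[h]}$ is automatically \hw{} (nothing lies below weight $h$), so taking $f$ dual to $w$ yields $2h\langle f,w\rangle=0$, which is absurd as soon as $h\ne0$.

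The main obstacle is to run this descent for $N\ge2$, and at the exceptional weights and central charges where the scalar $N(2h+N-1)$ (or its higher-level analogue) vanishes: there one cannot simply manufacture a \hw{} functional at weight $h+N-1$, and the survival of the entire $L_{-1}$-tower of the Verma must be established intrinsically. This is exactly where the Feigin--Fuchs structure enters, and I expect to combine \cref{sing} together with the accompanying fact that the coefficient of $L_{-1}^{N}$ in a singular vector never vanishes, and the embedding structures of \cref{exhaustiveembedding} and \cref{cor:embeddings} — in particular the reduction, via the socle, to an irreducible Verma $V(c,h')$ with $h'\notin H_c$, which carries no singular vectors at all — to show that the map $V(c,h)/C_1(V(c,h))\to U/C_1(U)$ is injective, forcing the target to be infinite-dimensional. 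Equivalently, the task is to prove that $C_1$-cofiniteness is preserved under contragredients within $\cC_1$, i.e. that $\bigcap_{k\ge2}\ker L_k$ is finite-dimensional; left-exactness of this kernel along the filtration sends that statement back to the very same Verma-exclusion, confirming that the absence of Verma submodules is the single hard point on which \cref{mainprop2} turns.
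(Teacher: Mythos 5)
Your reduction is sound as far as it goes: granting that every module in $\cC_1$ has finite length with composition factors $L(c,h)$, $h \in H_c$, the $C_1$-cofiniteness of each $W_i$ in an arbitrary \hw{} filtration does follow from \cref{noc1} and \cref{c1cofinite}\ref{inclusion}, and your induction along the filtration of \cref{mainprop1} correctly isolates the one essential claim: the minimal-weight \hw{} submodule $W_1 = \uea{\virn}w$ of a module in $\cC_1$ is not a Verma module. Your preparatory steps are also correct: $L_{-1}$ preserves $C_1(U)$ and is nilpotent on the finite-dimensional graded quotient $U/C_1(U)$, so there is a minimal $N\ge1$ with $L_{-1}^N w \in C_1(U)$; the adjunction $L_{-k} \leftrightarrow L_k$ under the contragredient pairing, the identification of the annihilator of $C_1(U)$ with $\bigcap_{k\ge2}\ker L_k$, and the computation $L_1L_{-1}^Nw = N(2h+N-1)L_{-1}^{N-1}w$ are all right, and they do settle the case $N=1$, $h\neq 0$.

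But the proof stops exactly where the real work begins, and you say so yourself. For $N \ge 2$ your descent needs a genuinely \emph{\hw{}} functional at weight $h+N-1$ pairing nontrivially with $L_{-1}^{N-1}w$: minimality of $N$ only supplies some $f$ with $L_kf=0$ for $k\ge2$, and for such $f$ one has $L_2L_{-1}f = 3L_1f \neq 0$ in general, so the descent cannot be iterated; worse, the scalars $N(2h+N-1)$ and their higher-level (Shapovalov-type) analogues vanish precisely at the reducibility points $h \in H_c$, i.e., exactly in the cases that actually arise. Closing this is not a routine verification --- it is the entire content of the paper's \cref{mainprop1.5}, which moreover proceeds by a different and more robust route: it works top-down along the filtration, using that the \emph{quotient} $W/\widetilde{W}$ is $C_1$-cofinite (easy, by \cref{c1cofinite}\ref{qu}) and hence non-Verma by \cref{noc1}\ref{it:c1notverma}, then invoking \cref{sing} (the coefficient of $L_{-1}^{h'-h}$ in a singular vector is nonzero, normalized to $1$) to convert $L_{-1}^M w \in C_1(W)$ into $L_{-1}^N Uw \in C_1(\widetilde{W})$, and in the two-singular-vector case additionally using the common descendant singular vector guaranteed by the embedding diagrams of \cref{exhaustiveembedding} together with the freeness of Verma modules over $\uea{\virn}$. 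Your closing sentence, that you ``expect to combine'' \cref{sing} with the embedding structure to show $V(c,h)/C_1(V(c,h)) \to U/C_1(U)$ is injective, names the right ingredients but is a conjecture rather than an argument: injectivity of that map is exactly the nontrivial point, since $C_1(U) \cap W_1$ may be strictly larger than $C_1(W_1)$ --- controlling this discrepancy is what \cref{mainprop1.5} does, and nothing in your write-up substitutes for it. So the proposal is an honest, well-aimed reduction with the decisive lemma missing; as it stands it proves \cref{mainprop2} only in the special case where the bottom tower closes at $N=1$ with $h \neq 0$.
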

\noindent Given this \lcnamecref{mainprop2}, the main \lcnamecref{virc} of this \lcnamecref{sec:equiv}, which plays the key role in the construction of the tensor category structure on $\ocfin$, is easily proven.

\begin{theorem}\label{virc}
As categories of $M(c,0)$-modules, $\cC_1 = \ocfin$.
\end{theorem}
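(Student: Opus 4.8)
\cref{virc} follows by chaining the two propositions above. The inclusion $\ocfin \subseteq \cC_1$ has already been recorded: an object of $\ocfin$ is a generalized $M(c,0)$-module, it is lower-bounded since its finitely many composition factors $L(c,h)$ are \hw{}, and it is $C_1$-cofinite by \cref{c1cofinite}\ref{inclusion} because each $L(c,h)$ with $h\in H_c$ is (\cref{noc1}\ref{it:c1notverma}). So the plan is to prove the reverse inclusion $\cC_1 \subseteq \ocfin$.

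Let $W \in \cC_1$. I would first invoke \cref{mainprop1} to produce a finite filtration $0 = W_0 \subset W_1 \subset \dots \subset W_n = W$ with each quotient $W_i/W_{i-1}$ a \hw{} $M(c,0)$-module, and then invoke \cref{mainprop2} to conclude that each $W_i$, and in particular each $W_i/W_{i-1}$, is $C_1$-cofinite. Each $W_i/W_{i-1}$ is therefore a $C_1$-cofinite \hw{} module, so by \cref{noc1}\ref{hig} it has finite length, and by \cref{noc1}\ref{it:c1compfacts} all of its composition factors are of the form $L(c,h)$ with $h\in H_c$. Refining \eqref{filh} by inserting, between $W_{i-1}$ and $W_i$, the pullback along $W_i\surj W_i/W_{i-1}$ of a composition series of $W_i/W_{i-1}$ then yields a composition series of $W$ whose factors are all $L(c,h)$ with $h\in H_c$; in particular $W$ has finite length, so $W\in\ocfin$. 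Since $\cC_1$ and $\ocfin$ are both full subcategories of the category of generalized $M(c,0)$-modules, this equality of object classes upgrades to an equality of categories.

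So the proof of \cref{virc} itself is bookkeeping; the substance lives in \cref{mainprop1,mainprop2}. Of these, \cref{mainprop1} is the easier one, proved by a descending induction on $\dim W/C_1(W)$ using only \cref{easylemma} and the explicit form \eqref{eq:C1Vir} of $C_1$ for Virasoro modules. The main obstacle is \cref{mainprop2}, deferred to the next section: one must rule out that some \hw{} subquotient $W_i/W_{i-1}$ in \eqref{filh} is isomorphic to a full Verma module $V(c,h)$, since such a module is neither $C_1$-cofinite (\cref{noc1}\ref{it:c1notverma}) nor of finite length, and its presence would invalidate the reasoning above. I would expect to handle this using the Feigin--Fuchs embedding structure of Verma $\vir$-modules (\cref{exhaustiveembedding}, \cref{cor:embeddings}) together with the fact (\cref{sing}) that a singular vector has nonvanishing $L_{-1}^{N}$-coefficient, depending polynomially on $(c,h)$.
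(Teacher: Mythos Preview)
Your argument is correct and follows the paper's own proof essentially verbatim: both reduce the nontrivial inclusion $\cC_1\subseteq\ocfin$ to \cref{mainprop1,mainprop2}, then apply \cref{noc1}\ref{hig} and \ref{it:c1compfacts} to the $C_1$-cofinite \hw{} subquotients to conclude finite length with the required composition factors. Your extra remarks about refining the filtration and about the ingredients going into \cref{mainprop1,mainprop2} are accurate elaborations but not departures from the paper's approach.
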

\begin{proof}
As remarked above, we only need to prove that $\cC_1 \subseteq \ocfin$.
Given $W \in \cC_1$,
the \hw{} modules of \cref{mainprop1}
are $C_1$-cofinite, by \cref{mainprop2},
so they are finite-length, by \cref{noc1}\ref{hig}.  It follows that $W$ is also finite-length.  Moreover, the composition factors of the \hw{} modules will all have the form $L(c,h)$, with $h \in H_c$, by \cref{noc1}\ref{it:c1compfacts}, hence so will those of $W$.  We conclude that $W \in \ocfin$, completing the proof.
\end{proof}
\noindent It only remains to prove \cref{mainprop2}.

\subsection{Proof of \cref{mainprop2}} \label{mainproof}

The hard work needed for this proof is isolated below as \cref{mainprop1.5}.  For this, we consider $\vir$-modules $\widetilde{W} \subseteq W$ for which $W/\widetilde{W}$ is \hw{}.  We prepare some convenient notation for what follows.

Let $\overline{w} \in W/\widetilde{W}$ be the cyclic \hw{} vector and let $w \in W$ be any element whose image in $W/\widetilde{W}$ is $\overline{w}$.  Since $W/\widetilde{W} = \uea{\virn} \overline{w}$, where $\virn = \bigoplus_{n<0} \CC L_n$ is the negative Virasoro subalgebra, we have
\begin{subequations} \label{eq:happy}
\begin{equation} \label{eq:happy0}
	W = \uea{\virn} w + \widetilde{W},
\end{equation}
as vector spaces, hence
\begin{equation} \label{eq:happy1}
	C_1(W) = \sum_{n=2}^{\infty} L_{-n} \uea{\virn} w + C_1(\widetilde{W}),
\end{equation}
\end{subequations}
by \eqref{eq:C1Vir}.

\begin{lemma} \label{mainprop1.5}
	Given $W \in \cC_1$, $\widetilde{W} \subseteq W$ and $w \in W$ as above, every element of $\uea{\virn} w \cap \widetilde{W}$ of sufficiently large conformal weight is in $C_1(\widetilde{W})$.
\end{lemma}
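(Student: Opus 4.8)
The plan is to exploit the finite-dimensionality of $W/C_1(W)$ in tandem with the embedding structure of Virasoro Verma modules (\cref{exhaustiveembedding,cor:embeddings}). First I would reduce to a clean situation: the quotient $Q = W/\widetilde{W}$ is a \hw{} $M(c,0)$-module, so it is either a Verma module $V(c,h)$ or a non-Verma \hw{} module. By \eqref{eq:happy0}, the $\vir$-module map $\uea{\virn} \to Q$, $x \mapsto x\overline{w}$, is surjective with kernel $\uea{\virn}w \cap \widetilde{W}$ (identifying $\uea{\virn}w$ with a quotient of $\uea{\virn}$ via $x \mapsto xw$; strictly, the kernel maps onto the kernel of $\uea{\virn}w \to Q$). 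Concretely, an element $xw \in \uea{\virn}w$ lies in $\widetilde{W}$ precisely when $x\overline{w} = 0$ in $Q$, i.e.\ when $x$ maps into the maximal submodule $J$ of $V(c,h)$ that is quotiented out to form $Q$. So I must show: for $x \in \uea{\virn}$ of sufficiently large weight with $x \overline{w} = 0$ in $Q$, the vector $xw$ lies in $C_1(\widetilde{W})$.

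The key steps, in order. (1) If $Q$ is a Verma module, then $J = 0$, so $\uea{\virn}w \cap \widetilde{W} = 0$ and there is nothing to prove; hence assume $Q$ is non-Verma, so by \cref{cor:embeddings}\ref{it:hwfinlen} it has finite length, and the maximal submodule $J \subseteq V(c,h)$ is nonzero. By \cref{cor:embeddings}\ref{it:vermasubmodules}, $J$ is either a single Verma module $V(c,h_1)$ or a sum $V(c,h_1) + V(c,h_2)$ of two Verma submodules, generated by one or two singular vectors. (2) Fix such a singular vector, say of weight $h + N_1$: by \cref{sing} it has the form $S_1 \one_{c,h}$ with $S_1 \in \uea{\virn}$ a weight-$(-N_1)$ element whose $L_{-1}^{N_1}$-coefficient is nonzero — in fact, crucially, equal to $1$ after normalization. (3) Now take any $x \in \uea{\virn}$ with $x\overline{w} = 0$; then $x\one_{c,h} \in J$, so $x\one_{c,h}$ is a $\uea{\virn}$-combination of the singular vectors, i.e.\ $x = \sum_j y_j S_j$ modulo the annihilator of $\one_{c,h}$ in $\uea{\virn}$ — but Verma modules are free over $\uea{\virn}$, so in fact $x = \sum_j y_j S_j$ with $y_j \in \uea{\virn}$. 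Therefore $xw = \sum_j y_j S_j w$. (4) The vector $S_j w \in W$ need not vanish, but its image in $Q$ is zero, so $S_j w \in \widetilde{W}$. Decompose $S_j = L_{-1}^{N_j} + (\text{terms containing some } L_{-n}, n \ge 2)$ using \cref{sing} (normalizing the $L_{-1}^{N_j}$-coefficient to $1$). Then $S_j w \equiv L_{-1}^{N_j} w \pmod{\sum_{n \ge 2} L_{-n}\uea{\virn}w}$. (5) Finally, for $x$ of large weight, expand $y_j$ in the PBW basis: monomials $L_{-i_1}\cdots L_{-i_k}$ of weight $\le -1$ (since $Q$ is \hw, $J$ sits strictly below $\one_{c,h}$, and for $x$ of large weight the $y_j$ must have weight $\le -1$). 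If $y_j = L_{-1}^m$ is a pure power of $L_{-1}$, then $y_j S_j w = L_{-1}^m S_j w \in \widetilde{W}$, and since $S_j w \in \widetilde{W}$ with $S_j w \in L_{-1}^{N_j}w + \sum_{n\ge2}L_{-n}\uea{\virn}w$, I need $L_{-1}^{m + N_j}w \in C_1(\widetilde{W}) = \sum_{n\ge2}L_{-n}\widetilde{W} + C_1(\widetilde{W})$ — hmm, this is circular, so the real argument must be more careful; if instead $y_j$ contains at least one factor $L_{-n}$ with $n \ge 2$, then $y_j S_j w \in \sum_{n\ge2}L_{-n}W \cap \widetilde{W}$ and one uses that $C_1(W) \cap \widetilde{W}$ relates to $C_1(\widetilde{W})$. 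I would therefore structure the proof around the observation that $C_1(W) = \sum_{n\ge2}L_{-n}\uea{\virn}w + C_1(\widetilde W)$ from \eqref{eq:happy1}, combined with an induction on conformal weight: assuming all elements of $\uea{\virn}w \cap \widetilde W$ of weight less than $k$ lie in $C_1(\widetilde W)$, push the base case up using the singular-vector relations, which kill the would-be "new" generators $L_{-1}^Nw$ once $N$ exceeds the largest singular-vector weight in the socle's predecessor.

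The main obstacle I anticipate is exactly step (5): controlling the pure-$L_{-1}$-power part. Every element of $\uea{\virn}w \cap \widetilde{W}$ is, modulo $C_1(\widetilde{W})$ and modulo $\sum_{n \ge 2}L_{-n}\uea{\virn}w$ (which by \eqref{eq:happy1} is the non-$C_1(\widetilde W)$ part of $C_1(W)$), a linear combination of vectors $L_{-1}^m w$; the content of the lemma is that for large weight these all disappear. This must come from the existence of a singular vector in $V(c,h)$: whenever $h \in H_c$ there is $N$ with $S\one_{c,h}=0$ in $Q$ (if $Q = L(c,h)$) or $S\one_{c,h}$ generating a Verma submodule, and since the $L_{-1}^N$-coefficient of $S$ is $1$, the relation $Sw \in \sum_{n\ge2}L_{-n}W$ lets one rewrite $L_{-1}^N w$ in terms of $\sum_{n\ge2}L_{-n}(\cdots)$ plus lower $L_{-1}$-powers times $\widetilde W$-elements — the surgery that converts "large power of $L_{-1}$ applied to $w$" into "something in $C_1$". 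The subtlety is that $Q$ may be a non-simple, non-Verma \hw{} module (the generic socle case $V_{r,s} \leftarrow V_{-r,s}$, or the minimal-model chains), so $Q$ itself can still contain a Verma submodule and one must iterate down the (finite, by \cref{cor:embeddings}\ref{it:hwfinlen}) embedding diagram, at each stage trading $L_{-1}$-powers for $C_1$-membership and using \cref{c1cofinite}\ref{qu} to keep track of $C_1$-quotients; managing this bookkeeping cleanly, and getting the "sufficiently large" bound to depend only on $W/\widetilde W$, is where the care is needed.
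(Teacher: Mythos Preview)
Your setup is correct and matches the paper: you reduce to $Q = W/\widetilde{W}$ being a non-Verma \hw{} module, identify $\uea{\virn}w \cap \widetilde{W}$ with $\sum_j \uea{\virn}S_j w$ where the $S_j \in \uea{\virn}$ are the one or two singular-vector polynomials (normalised so the $L_{-1}^{N_j}$ coefficient is $1$), and recognise that the whole lemma reduces to showing $L_{-1}^N S_j w \in C_1(\widetilde{W})$ for all large $N$.

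However, there is a genuine gap at exactly the point you flag as ``circular'' in step~(5), and your subsequent attempts do not close it. The claim ``$Sw \in \sum_{n\ge2}L_{-n}W$'' is simply false: all you know is $Sw \in \widetilde{W}$. Rewriting $L_{-1}^{N_j}w = S_jw - (\text{terms in } \sum_{n\ge2}L_{-n}\uea{\virn}w)$ only gives $L_{-1}^{N_j}w \in \widetilde{W} + \sum_{n\ge2}L_{-n}\uea{\virn}w$, which does not produce anything in $C_1(\widetilde{W})$. Your proposed induction on conformal weight has no base case and no mechanism to make progress, because nothing you have written down ever lands a single nonzero vector in $C_1(\widetilde{W})$. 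The worry about iterating down the embedding diagram is also misplaced: since every nonzero submodule of $V(c,h)$ is a Verma or a sum of two Vermas, the kernel $J$ is already handled by the one- or two-generator case, with no recursion needed.

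The idea you are missing is that you must actually \emph{use the hypothesis} $W \in \cC_1$. This gives $L_{-1}^M w \in C_1(W)$ for all large $M$, and now \eqref{eq:happy1} lets you write
\[
L_{-1}^M w \;=\; \sum_{n\ge2} L_{-n}U^{(n)}w \;+\; w', \qquad w' \in C_1(\widetilde{W}).
\]
The point is that $w'$ is forced to lie in $\uea{\virn}w$ (as a difference of two such elements) \emph{and} in $\widetilde{W}$ (since $C_1(\widetilde{W}) \subseteq \widetilde{W}$), hence $w' \in \uea{\virn}Uw$, say $w' = U'Uw$. Comparing pure $L_{-1}$-parts (using freeness of $V(c,h)$ over $\uea{\virn}$ and the normalisation of $U$) shows that $U'$ may be taken with $L_{-1}^N$-coefficient equal to $1$, where $N = M - N_j$. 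Writing $U' = L_{-1}^N + U''$ with $U'' \in \sum_{n\ge2}L_{-n}\uea{\virn}$, one has $U''Uw \in C_1(\widetilde{W})$ trivially (since $Uw \in \widetilde{W}$), so $L_{-1}^N Uw = w' - U''Uw \in C_1(\widetilde{W})$. In the two-singular-vector case one gets instead $a_1 L_{-1}^{N_1}U_1w + a_2 L_{-1}^{N_2}U_2w \in C_1(\widetilde{W})$ with $a_1+a_2=1$, and the common descendant singular vector is used to disentangle the two terms.
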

\begin{proof}
	Because $W \in \cC_1$, we have $W/\widetilde{W} \in \cC_1$, by \cref{c1cofinite}\ref{qu}, and so $W/\widetilde{W}$ cannot be isomorphic to a Verma module, by \cref{noc1}\ref{it:c1notverma}.  \cref{exhaustiveembedding} therefore allows only two possibilities: either $W/\widetilde{W}$ is isomorphic to $V(c,h)/V(c,h')$ or $V(c,h) / \bigl( V(c,h_1) + V(c,h_2) \bigr)$, for some $h, h', h_1, h_2 \in \CC$.

	Assume first that $W/\widetilde{W} \cong V(c,h) / V(c,h')$.  Then, there exists $U \in \uea{\virn}$ of weight $h'-h$ such that $U \one_{c,h}$ is singular in $V(c,h)$.  Thus, there exists $w \in W$ such that $U \overline{w} = 0$ and so $Uw \in \widetilde{W}$.  Moreover, we have
	\begin{equation} \label{eq:happy2}
		\uea{\virn} w \cap \widetilde{W} = \uea{\virn} Uw.
	\end{equation}
	For convenience, we shall normalise $U$ so that the coefficient of $L_{-1}^{h'-h}$ is $1$, as in \cref{sing}.

	Since $W \in \cC_1$, we have $L_{-1}^M w \in C_1(W)$ for all $M$ sufficiently large.  By \eqref{eq:happy1}, there exist $U^{(n)} \in \uea{\virn}$ and $w' \in C_1(\widetilde{W})$ such that
	\begin{gather}
		L_{-1}^M w = \sum_{n=2}^{\infty} L_{-n} U^{(n)} w + w' \label{eq:defw'} \\
		\Rightarrow \quad w' = L_{-1}^M w - \sum_{n=2}^{\infty} L_{-n} U^{(n)} w \in \uea{\virn} w \cap \widetilde{W} = \uea{\virn} Uw,
	\end{gather}
	by \eqref{eq:happy2}.  It follows that $w' = U'Uw \in C_1(\widetilde{W})$, for some $U' \in \uea{\virn}$ of weight $N=M-h'+h$.

	As $U'Uw = w' = L_{-1}^M w + \cdots$, the coefficient of $L_{-1}^N$ in $U'$ is $1$ and so we may write $U'$ in the form $L_{-1}^N + U''$, where $U'' \in \sum_{n=2}^{\infty} L_{-n} \uea{\virn}$.  Since $U''Uw$ is obviously in $C_1(\widetilde{W})$, it then follows that $L_{-1}^N Uw = w' - U''Uw \in C_1(\widetilde{W})$ for all sufficiently large $N$.  Consequently, every element of the form $U_N Uw \in \uea{\virn} Uw = \uea{\virn} w \cap \widetilde{W}$ is guaranteed to be in $C_1(\widetilde{W})$ if the weight of $U_N \in \uea{\virn}$ is sufficiently large, as desired.

	It remains to describe the modifications needed when $W/\widetilde{W} \cong V(c,h) / \bigl( V(c,h_1) + V(c,h_2) \bigr)$.  First, the role of $U$ is now played by two elements $U_i = L_{-1}^{h_i-h} + \cdots \in \uea{\virn}$, $i=1,2$, so that
	\begin{equation} \label{eq:happy2'}
		\uea{\virn} w \cap \widetilde{W} = \uea{\virn} U_1 w + \uea{\virn} U_2 w.
	\end{equation}
	The element $w' \in C_1(\widetilde{W})$, defined by \eqref{eq:defw'}, therefore has the form $U_1' U_1 w + U_2' U_2 w$, where $U_i' \in \uea{\virn}$ has weight $N_i = M-h_i+h$, $i=1,2$.

	The next step is slightly different 
	because comparing with \eqref{eq:defw'} leads to $U_i' = a_i L_{-1}^{N_i} + U_i''$, $i=1,2$, where $a_1, a_2 \in \CC$ satisfy $a_1+a_2=1$.  Thus, we can only conclude that
	\begin{equation} \label{eq:sad}
		a_1 L_{-1}^{N_1} U_1 w + a_2 L_{-1}^{N_2} U_2 w \in C_1(\widetilde{W}),
	\end{equation}
	for all sufficiently large $M$.  However, the embedding diagrams of \cref{exhaustiveembedding} show that when a submodule of $V(c,h)$ is not Verma, then the two generating singular vectors (here of weights $h_1$ and $h_2$) have a common descendant singular vector (of weight $h_3$ say).  Since Verma modules are free as $\uea{\virn}$-modules, this means that there exist $T_1, T_2 \in \uea{\virn}$ such that $T_1 U_1 = T_2 U_2$.  Moreover, \cref{sing} gives $T_i = L_{-1}^{h_3-h_i} + \cdots$ as usual.

	Assuming that $M$ is taken sufficiently large, it now follows that $a_2 L_{-1}^{h_3-h_2} U_2 w$ may be replaced in \eqref{eq:sad} by $a_2 T_2 U_2 w = a_2 T_1 U_1 w$, modulo terms in $C_1(\widetilde{W})$.  In other words, we arrive at $L_{-1}^N U_1 w \in C_1(\widetilde{W})$ for all $N$ sufficiently large and, by swapping the indices $1$ and $2$ in this argument, also $L_{-1}^N U_2 w \in C_1(\widetilde{W})$ for all $N$ sufficiently large.  By virtue of \eqref{eq:happy2'}, the proof is complete.
\end{proof}

We can now prove \cref{mainprop2}.  In the filtration \eqref{filh}, $W_n = W \in \cC_1$, so it will suffice to show that $W_i \in \cC_1$ implies that $W_{i-1} \in \cC_1$.  We therefore assume that $W_i \in \cC_1$.  As above, let $\overline{w_j} \in W_j / W_{j-1}$ be the cyclic \hw{} vector, for each $1 \le j \le n$, and choose $w_j \in W_j$ so that its image in $W_j/W_{j-1}$ is $\overline{w_j}$.

As $w_j \in W_i$ for each $j<i$, we have $L_{-1}^{N_j} w_j \in C_1(W_i)$ for all sufficiently large $N_j$.  By \eqref{eq:happy1}, we may therefore write
\begin{equation}
	L_{-1}^{N_j} w_j = \sum_{n\ge2} L_{-n} U^{(n)}_j w_i + w'_j,
\end{equation}
where $U^{(n)}_j \in \uea{\virn}$ and $w'_j \in C_1(W_{i-1})$.  The first term on the right-hand side is clearly in $\uea{\virn} w_i$.  However, it is also in $W_{i-1}$ because the second term is, as is the left-hand side (because $j<i$).  By \cref{mainprop1.5} (with $W=W_i$, $\widetilde{W}=W_{i-1}$ and $w=w_i$), this first term therefore belongs to $C_1(W_{i-1})$ for sufficiently large $N_j$.  But, the second term does too, hence we have
\begin{equation} \label{eq:happy3}
	L_{-1}^{N_j} w_j \in C_1(W_{i-1})
\end{equation}
for all $j<i$ and sufficiently large $N_j$.

Iterating \eqref{eq:happy}, with $W=W_{i-1}$, down the filtration \eqref{filh} now gives
\begin{equation}
	W_{i-1} = \sum_{j=1}^{i-1} \uea{\virn} w_j \quad \text{and} \quad C_1(W_{i-1}) = \sum_{j=1}^{i-1} \sum_{n=2}^{\infty} L_{-n} \uea{\virn} w_j.
\end{equation}
It follows that $W_{i-1} / C_1(W_{i-1})$ is spanned by the (images of the) $L_{-1}^m w_j$, with $j<i$ and $m \in \ZZ_{\ge0}$.  By \eqref{eq:happy3}, we have $\dim W_{i-1} / C_1(W_{i-1}) < \infty$ and the proof is complete.

\section{Tensor categories associated to the Virasoro algebra}
Recall that $\ocfin$ denotes the category of finite length $M(c,0)$-modules with composition factors $L(c,h)$ for $h\in H_c$ and note that $\ocfin$ is closed under taking direct sums, generalized submodules, quotient generalized modules and contragredient duals. In this section, we will construct a tensor category structure on $\ocfin$ by verifying that all of the conditions needed in the
Huang-Lepowsky-Zhang logarithmic tensor theory in \cite{HLZ0}--\cite{HLZ8} hold for $\ocfin$. For convenience, we first recall the general constructions and main results in \cite{HLZ0}--\cite{HLZ8}.

\subsection{$P(z)$-tensor product}
In the tensor category theory for vertex operator algebras, the tensor product bifunctors
are not built on the classical tensor product bifunctor for vector spaces. Instead, the central concept underlying the constructions is the notion of {\em $P(z)$-tensor product} \cite{HL3,HLZ3,HLZ4}, where $z$ is a nonzero complex number and $P(z)$ is the Riemann sphere $\overline{\CC}$ with one negatively oriented puncture at $\infty$ and two ordered positively oriented punctures at $z$ and $0$, with local coordinates $1/w$, $w-z$ and $w$, respectively.  We refer to \cite{KaRi} for an expository account that motivates the definition of this tensor product, also known as the fusion product.

\begin{defn}
Let $W_1$, $W_2$ and $W_3$ be generalized modules for a vertex operator algebra $V$. A \emph{$P(z)$-intertwining map of type $\binom{W_3}{W_1\,W_2}$} is a linear map
\begin{equation}
I\colon W_1 \otimes W_2 \longrightarrow \overline{W_3},
\end{equation}
satisfying the following conditions:
\begin{enumerate}[label=\textup{(\roman*)}]
	\item The \emph{lower truncation condition}. For any element $w_{(1)} \in W_1$, $w_{(2)} \in W_2$ and $n \in \CC$,
	\begin{equation}
	\pi_{n-m}(I(w_{(1)}\otimes w_{(2)})) =0 \quad\text{for}\ m \in \NN\ \text{sufficiently large},
	\end{equation}
	where $\pi_n$ is the canonical projection of $\overline{W}$ to the weight subspace $W_{(n)}$
	\item The \emph{Jacobi identity}. For $v\in V$, $w_{(1)}\in W_1$ and $w_{(2)}\in W_2$,
	\begin{align}
	&x_{0}^{-1}\delta \bigg( \frac{x_1-z}{x_{0}}\bigg)
	Y_3(v,x_1)I(w_{(1)}\otimes w_{(2)})\nonumber \\
	\quad &= z^{-1}\delta \bigg( \frac{x_1-x_{0}}{z}\bigg)
	I(Y_1(v,x_1)w_{(1)}\otimes w_{(2)})\nonumber \\
	\quad &+ x_0^{-1}\delta \bigg( \frac{z-x_1}{-x_0}\bigg)I(w_{(1)}\otimes Y_2(v,x_1)w_{(2)}).
	\end{align}
\end{enumerate}
\end{defn}

\begin{remark}The vector space of $P(z)$-intertwining maps of type $\binom{W_3}{W_1\,W_2}$
is isomorphic to the space  $\mathcal{V}_{W_{1}W_{2}}^{W_3}$ of logarithmic intertwining operators (\cref{log:def}) of the same type \cite[Prop.~4.8]{HLZ3}.
\end{remark}

\begin{defn}Let $W_1$ and $W_2$ be generalized $V$-modules. A \emph{$P(z)$-product} of $W_1$ and $W_2$ is a generalized $V$-module $(W_3, Y_3)$ together with a $P(z)$-intertwining map $I_3$ of type $\binom{W_3}{W_1\,W_2}$. We denote it by $(W_3, Y_3; I_3)$ or simply by $(W_3, I_3)$. Let $(W_4, Y_4; I_4)$ be another $P(z)$-product of $W_1$ and $W_2$. A \emph{morphism} from $(W_3, Y_3; I_3)$ to $(W_4, Y_4; I_4)$ is a module map $\eta$ from $W_3$ to $W_4$ such that
\begin{equation}
I_4 = \bar{\eta}\circ I_3,
\end{equation}
where $\bar{\eta}$ is the natural map from $\overline{W_3}$ to $\overline{W_4}$ which extends $\eta$.
\end{defn}

We recall the definition of a $P(z)$-tensor product for a category $\mathcal{C}$ of $V$-modules.  The notion of a $P(z)$-tensor product of $W_1$ and $W_2$ in $\mathcal{C}$ is defined in terms of a universal property as follows.
\begin{defn}For $W_1, W_2 \in \mathcal{C}$, a \emph{$P(z)$-tensor product} of $W_1$ and $W_2$ in $\mathcal{C}$ is a $P(z)$-product  $(W_0, Y_0; I_0)$ with $W_0 \in \mathcal{C}$ such that for any $P(z)$-product $(W, Y; I)$ with $W \in \mathcal{C}$, there is a unique morphism from $(W_0, Y_0; I_0)$ to $(W, Y; I)$. Clearly, a $P(z)$-tensor product  of $W_1$ and $W_2$ in $\mathcal{C}$, if it exists, is unique up to isomorphism. We denote the $P(z)$-tensor product $(W_0, Y_0; I_0)$ by
\begin{equation}
(W_1 \boxtimes_{P(z)} W_2, Y_{P(z)}; \boxtimes_{P(z)})
\end{equation}
and call the object
\begin{equation}
(W_1 \boxtimes_{P(z)} W_2, Y_{P(z)})
\end{equation}
the \emph{$P(z)$-tensor product of $W_1$ and $W_2$}.
\end{defn}

Recall that fusion rules are defined as the dimension of the space of intertwining operators $\mathcal{V}_{W_{1}W_{2}}^{W_{3}}$ (\cite{FHL}, cf.~\cref{log:def}) and can be computed independently of the existence of tensor structures for a vertex algebra \cite{FZ1, FZ2, Li, HY}. However, as one would expect, in the case that the tensor structure of Lepowsky--Huang--Zhang does exist, the space of intertwining operators 
is isomorphic to 
the space of vertex operator algebra homomorphisms from the $P(z)$-tensor product of two modules to a third module. That is, the fusion rules coincide with the fusion product coefficients of the tensor product of two modules. We recall the following result in [HLZ4] that establishes this correspondence:

\begin{prop}\label{isoM}\cite[Prop.~4.17]{HLZ3} Let $W_1$ and $W_2$ be generalized $V$-modules. Suppose that the $P(z)$-tensor product $W_1\boxtimes_{P(z)} W_2$ exists. Then, for $p\in \mathbb{Z}$, we have a natural isomorphism
\begin{align}
\Hom_V(W_1\boxtimes_{P(z)} W_2, W_3) \to 
\mathcal{V}_{W_{1}W_{2}}^{W_{3}}, \quad
\eta \mapsto 
\mathcal{Y}_{\eta, p},
\end{align}
where $\mathcal{Y}_{\eta, p}=\mathcal{Y}_{I,p}$ with $I=\bar{\eta}\, \circ \, \boxtimes_{P(z)}$ 
and, for $w_{(1)}\in W_1$ and $w_{(2)}\in W_2$,
\begin{align}
\mathcal{Y}_{I,p}(w_{(1)},x)w_{(2)} = 
\left. y^{L(0)}x^{L(0)}I(y^{-L(0)}x^{-L(0)}w_{(1)}\otimes y^{-L(0)}x^{-L(0)}w_{(2)})\right\rvert_{y=e^{-\log z-2\pi i p}}.
\end{align}
\end{prop}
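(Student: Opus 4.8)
The plan is to factor the claimed map into a composition of two natural linear bijections and to check that the composite is exactly $\eta\mapsto\mathcal{Y}_{\eta,p}$. The first bijection is the defining universal property of the $P(z)$-tensor product: for the category $\mathcal{C}$ over which the tensor product is formed, $\eta\mapsto\bar\eta\circ\boxtimes_{P(z)}$ identifies $\Hom_V(W_1\boxtimes_{P(z)}W_2,W_3)$ with the space of $P(z)$-intertwining maps of type $\binom{W_3}{W_1\,W_2}$. The second bijection is the identification of that space with $\mathcal{V}_{W_1W_2}^{W_3}$ recorded in the Remark following the definition of a $P(z)$-intertwining map, namely \cite[Prop.~4.8]{HLZ3}, which sends a $P(z)$-intertwining map $I$ to $\mathcal{Y}_{I,p}$. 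Composing the two and substituting $I=\bar\eta\circ\boxtimes_{P(z)}$ gives precisely the asserted formula for $\mathcal{Y}_{\eta,p}$.

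For the first bijection I would argue as follows. Given $\eta\in\Hom_V(W_1\boxtimes_{P(z)}W_2,W_3)$, the map $I:=\bar\eta\circ\boxtimes_{P(z)}\colon W_1\otimes W_2\to\overline{W_3}$ is a $P(z)$-intertwining map: since $\bar\eta$ is the grading-preserving extension of a $V$-module map, it commutes with the projections $\pi_n$ and with the action of $Y_3(v,x_1)$, so the lower-truncation condition and the $\delta$-function Jacobi identity transfer from $\boxtimes_{P(z)}$ to $I$. Conversely, any $P(z)$-intertwining map $I_3$ of type $\binom{W_3}{W_1\,W_2}$ with $W_3\in\mathcal{C}$ makes $(W_3,Y_3;I_3)$ a $P(z)$-product, so the universal property supplies a unique module map $\eta$ with $\bar\eta\circ\boxtimes_{P(z)}=I_3$; uniqueness shows the two constructions are mutually inverse, and both are evidently linear. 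Naturality in $W_3$ is immediate, since a module map $f\colon W_3\to W_3'$ sends the $P(z)$-product $(W_3,Y_3;I_3)$ to $(W_3',Y_3';\bar f\circ I_3)$, hence the associated $\eta$ to $f\circ\eta$.

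For the second bijection I would invoke \cite[Prop.~4.8]{HLZ3}. Substituting into $I$ the branch $\log z+2\pi i p$ of $\log z$ via
\[
\mathcal{Y}_{I,p}(w_{(1)},x)w_{(2)}=\left.y^{L(0)}x^{L(0)}I\bigl(y^{-L(0)}x^{-L(0)}w_{(1)}\otimes y^{-L(0)}x^{-L(0)}w_{(2)}\bigr)\right\rvert_{y=e^{-\log z-2\pi i p}}
\]
converts the axioms for a $P(z)$-intertwining map into those of \cref{log:def}: the lower-truncation condition for $I$ becomes the lower-truncation condition for $\mathcal{Y}_{I,p}$, and the $P(z)$-Jacobi identity becomes the intertwining-operator Jacobi identity after conjugating the vertex operators by $x^{L(0)}$ (using $x^{L(0)}Y_3(v,x_1)x^{-L(0)}=Y_3(x^{L(0)}v,xx_1)$ and the corresponding identities on $W_1$ and $W_2$), with the powers of $\log x$ arising precisely because $L(0)$ need not act semisimply on $W_3$. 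The inverse recovers $I$ from $\mathcal{Y}$ by formally specialising the variable $x$ to $z$ with the chosen branch, so this is again a linear bijection; distinct $p$ give distinct isomorphisms, differing by the monodromy of $z\mapsto z^{L(0)}$.

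The routine part is the first step, which is little more than a restatement of the universal property once one observes that $\bar\eta$ commutes with all the data in the definition of a $P(z)$-intertwining map. The technical heart --- and the \emph{main obstacle} --- is the second step: verifying that the substitution formula really defines an element of $\mathcal{V}_{W_1W_2}^{W_3}$ and that the correspondence is bijective requires the formal-calculus bookkeeping of \cite[Prop.~4.8]{HLZ3}, in particular careful handling of the $\log x$ contributions forced by the generalized (non-semisimple) $L(0)$-grading. Since both bijections are linear, the composite is an isomorphism of vector spaces, natural in $W_3$ (and, running the same argument for morphisms of $W_1$ and $W_2$, in all three variables). Taking dimensions then recovers the anticipated statement that the fusion rule satisfies $\mathcal{N}_{W_1W_2}^{W_3}=\dim\Hom_V(W_1\boxtimes_{P(z)}W_2,W_3)$.
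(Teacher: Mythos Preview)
The paper does not give its own proof of this proposition: it is quoted verbatim as \cite[Prop.~4.17]{HLZ3}, a background result recalled from the Huang--Lepowsky--Zhang theory, so there is no ``paper's proof'' to compare against. Your outline is correct and is precisely the argument in the original source: the map factors as the universal-property bijection $\eta\mapsto\bar\eta\circ\boxtimes_{P(z)}$ from $\Hom_V(W_1\boxtimes_{P(z)}W_2,W_3)$ to $P(z)$-intertwining maps (for $W_3$ in the ambient category $\mathcal{C}$), followed by the bijection $I\mapsto\mathcal{Y}_{I,p}$ of \cite[Prop.~4.8]{HLZ3} between $P(z)$-intertwining maps and logarithmic intertwining operators --- exactly the two ingredients the paper itself singles out in the Remark preceding the proposition and in the surrounding discussion.
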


We now recall the construction of the $P(z)$-tensor product in \cite{HLZ4}. Let $v \in V$ and let
\begin{equation}
Y_t(v, x) = \sum_{n \in \ZZ}(v \otimes t^n)x^{-n-1} \in (V \otimes \CC[t, t^{-1}])[[x, x^{-1}]].
\end{equation}
Denote by $\tau_{P(z)}$ the action of
\begin{equation}
V \otimes \iota_{+}{\CC}[t,t^{- 1}, (z^{-1}-t)^{-1}]
\end{equation}
on the vector space $(W_1 \otimes W_2)^*$, where $\iota_+$ is the operation of expanding a rational function in the formal variable $t$ in the direction of positive powers of $t$, given by
\begin{align}\label{defofpz}
&\bigg(\tau_{P(z)}\bigg(x_0^{-1}\delta\bigg(\frac{x_1^{-1}-z}{x_0}\bigg)Y_t(v, x_1)\bigg)\lambda\bigg)(w_{(1)}\otimes w_{(2)}) \nonumber \\
&\quad = z^{-1}\delta\bigg(\frac{x_1^{-1}-x_0}{z}\bigg)\lambda(Y_1(e^{x_1L(1)}(-x_1^{-2})^{L(0)}v, x_0)w_{(1)}\otimes w_{(2)}) \nonumber \\
&\quad +\; x_0^{-1}\delta\bigg(\frac{z-x_1^{-1}}{-x_0}\bigg)\lambda(w_{(1)}\otimes Y_2^{\circ}(v, x_1)w_{(2)}),
\end{align}
for $v \in V$, $\lambda \in (W_1 \otimes W_2)^*$, $w_{(1)} \in W_1$ and $w_{(2)} \in W_2$. Denote by $Y_{P(z)}'$ the action of $V \otimes \CC[t,t^{-1}]$ on $(W_1\otimes W_2)^*$ defined by
\begin{equation}
Y_{P(z)}'(v,x) = \tau_{P(z)}(Y_t(v,x)).
\end{equation}
Then, we have the operators $L_{P(z)}'(n)$ for $n \in \ZZ$ defined by
\begin{equation}
Y_{P(z)}'(\omega,x) = \sum_{n \in \ZZ}L_{P(z)}'(n)x^{-n-2}.
\end{equation}

Given two $V$-modules $W_1$ and $W_2$, let $W_1\hboxtr_{P(z)}W_2$ be the vector space consisting of all the elements $\lambda \in (W_1 \otimes W_2)^*$ satisfying the following two conditions.
\begin{enumerate}
\item
\textit{$P(z)$-compatibility condition}:
\begin{enumerate}[label=\textup{(\alph*)}]
\item
\textit{Lower truncation condition}: For all $v \in V$, the formal Laurent series $Y_{P(z)}'(v,x)\lambda$ involves only finitely many negative powers of $x$.
\item
The following formula holds:
\begin{align}
&\tau_{P(z)}\bigg(z^{-1}\delta\bigg(\frac{x_1-x_0}{z}\bigg)Y_t(v,x_0)\bigg)\lambda\nonumber \\
&= z^{-1}\delta\bigg(\frac{x_1-x_0}{z}\bigg)Y_{P(z)}'(v,x_0)\lambda \quad\text{for all}\ v \in V.
\end{align}
\end{enumerate}
\item
\textit{$P(z)$-local grading restriction condition}:
\begin{enumerate}[label=\textup{(\alph*)}]
\item
\textit{Grading condition}: $\lambda$ is a (finite) sum of generalized eigenvectors of $(W_1 \otimes W_2)^*$ for the operator $L_{P(z)}'(0)$.
\item
The smallest subspace $W_{\lambda}$ of $(W_1 \otimes W_2)^*$ containing $\lambda$ and stable under the component operators $\tau_{P(z)}(v\otimes t^n)$ of the operators $Y_{P(z)}'(v,x)$, for $v \in V$ and $n \in \ZZ$, satisfies $\dim (W_{\lambda})_{[n]} < \infty$ and $(W_{\lambda})_{[n+k]} = 0$ for $k \in \ZZ$ sufficiently negative and any $n \in \CC$. Here, the subscripts denote the $\CC$-grading given by the $L_{P(z)}'(0)$-eigenvalues.
\end{enumerate}
\end{enumerate}

\begin{theorem}[\cite{HLZ4}]
The vector space $W_1\hboxtr_{P(z)}W_2$ is closed under the action $Y_{P(z)}'$ of $V$ and the Jacobi identity holds on $W_1\hboxtr_{P(z)}W_2$. Furthermore, the $P(z)$-tensor product of $W_1, W_2 \in \mathcal{C}$ exists if and only if $W_1\hboxtr_{P(z)}W_2$, equipped with $Y_{P(z)}'$, is an object of $\mathcal{C}$. In this case, the $P(z)$-tensor product is the contragredient of $(W_1\hboxtr_{P(z)}W_2, Y_{P(z)}')$.
\end{theorem}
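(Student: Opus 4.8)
The statement bundles three assertions: that $W_1\hboxtr_{P(z)}W_2$ is stable under the action $Y_{P(z)}'$ of $V$, that the Jacobi identity holds on it (so that, together with the grading properties built into the $P(z)$-local grading restriction condition, it is a generalized $V$-module), and that its contragredient carries the universal $P(z)$-product. The organising principle I would follow is that the $P(z)$-compatibility condition is a disguised form of the Jacobi identity adapted to the geometry of $P(z)$: it is automatically preserved by the $V$-action, and on the subspace where it holds it can be ``unpacked'' into the genuine Jacobi identity, in direct analogy with the recovery of the full Jacobi identity for a vertex algebra from weak commutativity together with weak associativity.

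\emph{Stability.} First I would fix $\lambda\in W_1\hboxtr_{P(z)}W_2$ and $v\in V$, $n\in\ZZ$, and check that $\mu:=\tau_{P(z)}(v\otimes t^n)\lambda$ again satisfies both defining conditions. For the $P(z)$-local grading restriction condition this is essentially formal: $\mu$ already lies in $W_\lambda$, so the smallest $Y_{P(z)}'$-stable subspace $W_\mu$ containing $\mu$ is contained in $W_\lambda$ and inherits lower-boundedness and finite-dimensionality of its $L_{P(z)}'(0)$-eigenspaces, while $\mu$ is a finite sum of generalized $L_{P(z)}'(0)$-eigenvectors because $W_\lambda$ is; the same grading bound also gives the lower-truncation half of the $P(z)$-compatibility condition for $\mu$. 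The substantive point is the main compatibility formula for $\mu$, which I would derive by applying $\tau_{P(z)}$ to the commutator-type $\delta$-function identity valid on the ambient space $(W_1\otimes W_2)^*$, substituting the identity already known for $\lambda$, and collapsing the result with the standard three-term $\delta$-function identities; the expansion convention $\iota_+$ for $(z^{-1}-t)^{-1}$ is exactly what keeps these manipulations self-consistent.

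\emph{Jacobi identity.} By the very definition \eqref{defofpz} of $\tau_{P(z)}$, every $\lambda\in(W_1\otimes W_2)^*$ already satisfies a ``$P(z)$-Jacobi identity'' carrying one spurious extra $\delta$-factor. Restricting to $\lambda\in W_1\hboxtr_{P(z)}W_2$, the $P(z)$-compatibility condition forces the two channels through which $Y_{P(z)}'$ acts to agree, which lets that extra $\delta$-function be cancelled and leaves precisely the Jacobi identity for $Y_{P(z)}'$. I expect this to be the main obstacle: it is the step carrying essentially all of the technical weight, requiring careful bookkeeping of formal expansions of rational functions in several variables and repeated, delicate use of the combinatorial $\delta$-identities.

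\emph{The universal property.} Finally I would identify the $P(z)$-tensor product. The key observation is that a $P(z)$-intertwining map $I\colon W_1\otimes W_2\to\overline{W_3}$ is exactly the same data as a linear map $\mu_I\colon W_3'\to(W_1\otimes W_2)^*$, $w'_{(3)}\mapsto\langle w'_{(3)},I(\blank\otimes\blank)\rangle$, and that the Jacobi identity and $L_{-1}$-derivative property for $I$ translate precisely into the statements that $\mu_I$ takes values in $W_1\hboxtr_{P(z)}W_2$ and intertwines the $V$-actions. Dually, any morphism of generalized modules $\eta\colon(W_1\hboxtr_{P(z)}W_2)'\to W_3$ has transpose $\eta'\colon W_3'\to W_1\hboxtr_{P(z)}W_2$, and one checks that the assignments $I\mapsto\mu_I$ and $\eta\mapsto\overline{\eta}\circ\boxtimes_{P(z)}$ are mutually inverse. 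Hence $(W_1\hboxtr_{P(z)}W_2)'$, equipped with the canonical $P(z)$-intertwining map dual to the inclusion $W_1\hboxtr_{P(z)}W_2\hookrightarrow(W_1\otimes W_2)^*$, satisfies the defining universal property of the $P(z)$-tensor product in $\mathcal{C}$ exactly when it is an object of $\mathcal{C}$; and if it is not, then no $P(z)$-tensor product can exist in $\mathcal{C}$, since any candidate would be forced by that universal property to be isomorphic to it. Uniqueness up to isomorphism is then automatic from the universal property.
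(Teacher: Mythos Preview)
The paper does not give its own proof of this theorem: it is stated with the citation \cite{HLZ4} and no argument is supplied, as this is a foundational result quoted from the Huang--Lepowsky--Zhang tensor category papers rather than something established here. There is therefore nothing in the present paper to compare your proposal against.

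That said, your outline is an accurate high-level sketch of the argument as it appears in the original HLZ source: stability of the compatibility and local grading restriction conditions under the $Y_{P(z)}'$-action, extraction of the Jacobi identity from the compatibility condition via $\delta$-function manipulations, and the identification of $P(z)$-intertwining maps with module maps into $W_1\hboxtr_{P(z)}W_2$ to obtain the universal property. One small caveat on the last step: your ``only if'' direction is slightly too quick. You say that if $(W_1\hboxtr_{P(z)}W_2)'\notin\mathcal{C}$ then no $P(z)$-tensor product exists because any candidate would be isomorphic to it, but the universal property is stated \emph{within} $\mathcal{C}$, so a priori a candidate in $\mathcal{C}$ need only receive a unique map from every $P(z)$-product \emph{in} $\mathcal{C}$. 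The actual argument is that $W_1\hboxtr_{P(z)}W_2$ is the union of all images $\mu_I(W_3')$ over $P(z)$-products $(W_3,I)$ with $W_3\in\mathcal{C}$, so if a universal object in $\mathcal{C}$ existed its contragredient would have to coincide with this union, forcing $W_1\hboxtr_{P(z)}W_2\in\mathcal{C}$.
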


To construct a tensor category structure on $\ocfin$, we first need 
to show that $\ocfin$ is closed under $P(z)$-tensor products. This is an immediate corollary of the following result of Miyamoto.
\begin{theorem}[\cite{Mi}]\label{miyamoto}
Let $W_1, W_2 \in \cC_1$. If $W_3$ is a lower-bounded generalized module such that there exists a surjective intertwining operator of type $\binom{W_3}{W_1\,W_2}$, then $W_3$ is also an object of $\cC_1$. In particular, the $P(z)$-tensor product $W_1 \boxtimes_{P(z)} W_2$ is an object in $\cC_1$.
\end{theorem}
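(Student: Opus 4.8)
The plan is to prove the first assertion — that a surjective logarithmic intertwining operator $\mathcal{Y}$ of type $\binom{W_3}{W_1\,W_2}$ out of two $C_1$-cofinite modules forces $W_3$ to be $C_1$-cofinite — and then deduce the ``in particular'' clause by applying it to the canonical intertwining map $\boxtimes_{P(z)}$, which is by construction surjective onto $W_1\boxtimes_{P(z)}W_2$. Throughout I would exploit two features of the Virasoro setting recorded in \eqref{eq:C1Vir}: first, that $C_1(W)=\sum_{n\ge2}L_{-n}W$, so that $L_{-n}W_3\subseteq C_1(W_3)$ for every $n\ge2$; and second, that $L_{-1}$ preserves $C_1(W_i)$ and so descends to a weight-raising operator on the finite-dimensional graded space $W_i/C_1(W_i)$, hence is nilpotent there, giving $L_{-1}^{M_i}W_i\subseteq C_1(W_i)$ for some $M_i$. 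Fix homogeneous sets $G_1,G_2$ whose images form bases of $W_1/C_1(W_1)$ and $W_2/C_1(W_2)$.

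By surjectivity, $W_3$ is spanned by the coefficients of $\mathcal{Y}(w_{(1)},x)w_{(2)}$ as $w_{(1)},w_{(2)}$ range over $W_1,W_2$. The first step is to reduce both arguments to $G_1$ and $G_2$ modulo $C_1(W_3)$. For this I would specialise the Jacobi identity of \cref{log:def} to $v=\omega$ and take residues to obtain iterate and commutator formulas for $\mathcal{Y}$; schematically, for $n\ge2$,
\[
\mathcal{Y}(L_{-n}w_{(1)},x)w_{(2)}\equiv-\sum_{i\ge0}\binom{1-n}{i}(-x)^{1-n-i}\,\mathcal{Y}(w_{(1)},x)L_{i-1}w_{(2)}\pmod{C_1(W_3)},
\]
together with a symmetric identity handling a factor $L_{-n}$ on the second argument. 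In each, the terms in which $\omega$ acts on $W_3$ involve only the modes $L_{m}$ with $m\le-2$, which lie in $C_1(W_3)$ by the first feature above, while the surviving terms keep the same $W_3$-weight but strictly decrease the sum of the two argument weights (bounded below since $W_1,W_2$ are lower-bounded, and the displayed sums are finite because $L_{m}$ with $m\gg0$ kills any element of a lower-bounded module). Since $C_1(W_i)=\sum_{n\ge2}L_{-n}W_i$, an induction on this sum rewrites an arbitrary $\mathcal{Y}(w_{(1)},x)w_{(2)}$, modulo $C_1(W_3)$, as a finite $\CC[x,x^{-1}]$-linear combination of the finitely many series $\mathcal{Y}(g,x)g'$ with $g\in G_1$ and $g'\in G_2$. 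Consequently $W_3/C_1(W_3)$ is spanned by the coefficients of these finitely many series.

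It remains to bound the surviving modes of each $\mathcal{Y}(g,x)g'$. Here I would combine the nilpotency $L_{-1}^{M_1}g\in C_1(W_1)$ with the $L_{-1}$-derivative property of \cref{log:def}: differentiating $M_1$ times,
\[
\frac{d^{M_1}}{dx^{M_1}}\mathcal{Y}(g,x)g'=\mathcal{Y}(L_{-1}^{M_1}g,x)g'\equiv\sum_{h\in G_1,\,h'\in G_2}p_{h,h'}(x)\,\mathcal{Y}(h,x)h'\pmod{C_1(W_3)},
\]
with $p_{h,h'}\in\CC[x,x^{-1}]$, the reduction on the right being that of the previous paragraph. Collecting the finitely many $\mathcal{Y}(g,x)g'$ and their first $M_1-1$ derivatives into a vector $F$ turns this into a first-order linear system $\frac{d}{dx}F\equiv A(x)F$ with $A(x)$ a matrix over $\CC[x,x^{-1}]$, valid modulo $C_1(W_3)$. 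A regular-singular-point argument then bounds the $\CC$-span of all coefficients of $F$ modulo $C_1(W_3)$, whence $\dim W_3/C_1(W_3)<\infty$ and $W_3\in\cC_1$; the ``in particular'' follows at once.

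The main obstacle is this final step: arranging the reductions so that the differentiated top terms close into a genuine finite-rank ODE system with poles of bounded order at $x=0$, and then extracting finiteness of the surviving modes modulo $C_1(W_3)$ without circularity. By contrast, the argument-reduction of the middle paragraph is essentially bookkeeping once the iterate and commutator formulas are written down, and the two structural inputs from \eqref{eq:C1Vir} — that negative Virasoro modes of depth $\ge2$ land in $C_1$, and that $L_{-1}$ is nilpotent on each $C_1$-quotient — are precisely what make the Virasoro case tractable. The delicate point throughout is converting this nilpotency into the finite differential system that caps the conformal weights occurring in $W_3/C_1(W_3)$.
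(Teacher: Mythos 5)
First, note that the paper itself gives no proof of this statement: it is quoted from Miyamoto \cite{Mi}, so your attempt has to be measured against that cited argument, which your plan in fact parallels quite closely. Your first two steps are sound: $L_{-1}$ does preserve each $C_1(W_i)$ (since $[L_{-1},L_{-n}]=(n-1)L_{-n-1}$ keeps you inside $\sum_{n\ge 2}L_{-n}W_i$) and so acts nilpotently on the finite-dimensional graded quotient, giving a uniform $M$ with $L_{-1}^{M}W_i\subseteq C_1(W_i)$; and the iterate/commutator consequences of the Jacobi identity, taken modulo $C_1(W_3)=\sum_{n\ge2}L_{-n}W_3$ as in \eqref{eq:C1Vir}, do strictly decrease the total argument weight, so that induction (with finiteness of the sums from lower-boundedness) legitimately reduces everything to the finitely many series $\mathcal{Y}(g,x)g'$, $g\in G_1$, $g'\in G_2$. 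In particular your middle paragraph already shows each graded piece of $W_3/C_1(W_3)$ has dimension bounded by $\abs{G_1}\,\abs{G_2}\,(K+1)$; what remains is only to kill all but finitely many weights.

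The gap is exactly where you flagged it, and as written it is genuine. From a congruence $\frac{d}{dx}F\equiv A(x)F$ with $A$ known only to lie in $M_N(\CC[x,x^{-1}])$, no ``regular-singular-point argument'' is available: if $A$ has a pole of order $\ge 2$ at $x=0$, matching powers of $x$ relates the weight-$m$ coefficients to coefficients on \emph{both} sides of $m$, so the recursion neither propagates downward to force vanishing in high weight nor expresses all coefficients in the span of finitely many initial ones, and formal lower-truncated solutions of irregular systems require a separate analysis. The missing idea that closes the argument is $L_0$-homogeneity of the entire reduction: every identity you invoke (iterate formula, commutator formula, $L_{-1}$-derivative property) is weight-graded, so in $\frac{d^{M}}{dx^{M}}\mathcal{Y}(g,x)g'\equiv\sum_{h,h'}p_{h,h'}(x)\,\mathcal{Y}(h,x)h'$ the coefficients are forced to be \emph{monomials}, $p_{h,h'}(x)=p_{h,h'}\,x^{d}$ with $d=\wt h+\wt h'-\wt g-\wt g'-M$ determined by the weights. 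Extracting the weight-$m$ coefficient (at top log-degree, then inducting downward in the power of $\log x$) therefore yields, for the vector $\bar{C}_m$ of weight-$m$ coefficients of all the series $\pi\mathcal{Y}(g,x)g'$ in $W_3/C_1(W_3)$, a pointwise linear system
\begin{equation*}
\bigl(\Lambda(m)-P\bigr)\bar{C}_m=0,
\end{equation*}
where $P$ is a \emph{constant} matrix and $\Lambda(m)$ is diagonal with entries the monic degree-$M$ polynomials $\prod_{i=0}^{M-1}(m-\wt g-\wt g'-i)$ arising from differentiating $x$-powers $M$ times. Hence $\det(\Lambda(m)-P)$ is a nonzero polynomial in $m$, so $\bar{C}_m=0$ for all but finitely many $m\in\CC$ (no coset bookkeeping needed), and combined with your spanning bound this gives $\dim W_3/C_1(W_3)<\infty$ with no ODE theory at all. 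So: right mechanism, essentially the one underlying \cite{Mi}, but the finiteness step requires this homogeneity/monomial bookkeeping (equivalently, a proof that your system becomes regular singular after multiplying by $x$), which your write-up correctly identifies as the crux but does not supply; the ``in particular'' clause via surjectivity of the canonical intertwining map of $\boxtimes_{P(z)}$ is fine.
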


\begin{cor}\label{cor:closedness}
The category $\ocfin$ is closed under taking $P(z)$-tensor products. Namely,
if $W_1, W_2 \in \ocfin$, then $W_1 \btimes_{P(z)} W_2 \in \ocfin$.
\end{cor}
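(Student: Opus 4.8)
The plan is to deduce this immediately from the identification $\cC_1 = \ocfin$ of \cref{virc} together with Miyamoto's closure theorem (\cref{miyamoto}). First I would observe that, by \cref{virc}, the hypothesis $W_1, W_2 \in \ocfin$ says precisely that $W_1$ and $W_2$ are lower-bounded $C_1$-cofinite generalized $M(c,0)$-modules, i.e.\ that $W_1, W_2 \in \cC_1$.

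Next I would apply \cref{miyamoto} directly. Since $W_1, W_2 \in \cC_1$, that theorem asserts that the $P(z)$-tensor product $W_1 \btimes_{P(z)} W_2$ exists as a lower-bounded generalized $M(c,0)$-module and is again an object of $\cC_1$. (Concretely, in the HLZ framework the tensor product is equipped with the universal intertwining map $\btimes_{P(z)} \colon W_1 \otimes W_2 \to \ov{W_1 \btimes_{P(z)} W_2}$, and Miyamoto's argument then yields its $C_1$-cofiniteness; this is exactly what \cref{miyamoto} packages.)

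Finally, invoking \cref{virc} once more in the opposite direction converts membership in $\cC_1$ back into membership in $\ocfin$, so $W_1 \btimes_{P(z)} W_2 \in \cC_1 = \ocfin$, which is the assertion.

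I do not expect any genuine obstacle at this step: all of the substantive work has already been carried out, namely Miyamoto's theorem on the $C_1$-cofiniteness of fusion products on the one hand, and the structural results \cref{mainprop2} and \cref{virc} identifying the \emph{a priori} larger and non-abelian category $\cC_1$ with the finite-length category $\ocfin$ on the other. Granting those, the corollary is a one-line deduction.
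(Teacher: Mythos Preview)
Your proposal is correct and follows exactly the same approach as the paper: use \cref{virc} to pass from $\ocfin$ to $\cC_1$, apply \cref{miyamoto} to conclude that the $P(z)$-tensor product is again in $\cC_1$, and then use \cref{virc} once more to return to $\ocfin$.
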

\begin{proof}
By \cref{virc}, $W_1$ and $W_2 \in \ocfin$ are lower-bounded and $C_1$-cofinite, so using \cref{miyamoto} we have that the $P(z)$-tensor product of $W_1$ and $W_2$ is also lower-bounded and $C_1$-cofinite. By \cref{virc} again, we have that $W_1 \btimes_{P(z)} W_2 \in \ocfin$.
\end{proof}

\subsection{Associativity isomorphism}
The associativity isomorphism is the most important ingredient of the tensor category theory of Huang--Lepowsky--Zhang. To prove it, one needs the following convergence and extension property introduced in \cite{HLZ7}.

\begin{defn}
Let $A$ be an abelian group and $\tilde{A}$ an abelian group containing $A$ as a subgroup. Let $V$ be a strongly $A$-graded conformal vertex algebra. We say that the product of the intertwining operators $\mathcal{Y}_1$ and $\mathcal{Y}_2$ satisfies the \emph{convergence and extension property for products} if for any doubly homogeneous elements $w_{(1)} \in W_1^{(\beta_1)}$ and $w_{(2)} \in W_2^{(\beta_2)}$, with $\beta_1, \beta_2 \in \tilde{A}$, and any $w_{(3)} \in W_3$ and $w_{(4)}' \in W_4'$, there exist $r_1, \dots, r_M, s_1, \dots, s_M \in \RR$, $i_1, \dots, i_M, j_1, \dots, j_M \in \NN$ and analytic functions $f_1(z), \dots, f_M(z)$ on $\abs{z} < 1$ (for some $M \in \NN$) satisfying
\begin{equation}
\wt w_{(1)} + \wt w_{(2)} + s_k > N,\quad \text{for each}\ k = 1, \dots, M,
\end{equation}
where $N \in \ZZ$ depends only on $\mathcal{Y}_1$, $\mathcal{Y}_2$ and $\beta_1 + \beta_2$, such that
\begin{equation}
\Bigl.\bigl\langle w_{(4)}', \mathcal{Y}_1(w_{(1)},  x_1)\mathcal{Y}_2(w_{(2)}, x_2)w_{(3)} \bigr\rangle_{W_4} \Bigr\rvert_{x_1 = z_1,\ x_2 = z_2}
\end{equation}
is absolutely convergent on $\abs{z_1} > \abs{z_2} > 0$ and may be analytically extended to the multivalued analytic function
\begin{equation}
\sum_{k = 1}^M z_2^{r_k}(z_1 - z_2)^{s_k}(\log z_2)^{i_k}(\log (z_1 - z_2))^{j_k}f_k\left(\frac{z_1 - z_2}{z_2}\right)
\end{equation}
in the region $\abs{z_2} > \abs{z_1 - z_2} > 0$.
\end{defn}

\begin{defn}
We say that the iterate of the intertwining operators $\mathcal{Y}^1$ and $\mathcal{Y}^2$ satisfies the \emph{convergence and extension property for iterates} if for any doubly homogeneous elements $w_{(2)} \in W_2^{(\beta_2)}$, $w_{(3)} \in W_3^{(\beta_3)}$ with $\beta_2, \beta_3 \in \tilde{A}$, and any $w_{(1)} \in W_1$ and $w_{(4)}' \in W_4'$, there exist $\tilde{r}_1, \dots, \tilde{r}_M, \tilde{s}_1, \dots, \tilde{s}_{\widetilde{M}} \in \RR$, $\tilde{i}_1, \dots, \tilde{i}_{\widetilde{M}}, \tilde{j}_1, \dots, \tilde{j}_{\widetilde{M}} \in \NN$ and analytic functions $\tilde{f}_1(z), \dots, \tilde{f}_{\widetilde{M}}(z)$ on $\abs{z} < 1$ (for some $\widetilde{M} \in \NN$) satisfying
\begin{equation}
\wt w_{(2)} + \wt w_{(3)} + \tilde{s}_k > \widetilde{N},\quad \text{for each}\ k = 1, \dots, \widetilde{M},
\end{equation}
where $\widetilde{N} \in \ZZ$ depends only on $\mathcal{Y}^1$, $\mathcal{Y}^2$ and $\beta_2 + \beta_3$, such that
\begin{equation}
\Bigl.\bigl\langle w_{(4)}', \mathcal{Y}^1(\mathcal{Y}^2(w_{(1)}, x_0)w_{(2)}, x_2)w_{(3)} \bigr\rangle_{W_4} \Bigr\rvert_{x_0 = z_1-z_2,\ x_2 = z_2}
\end{equation}
is absolutely convergent on $\abs{z_2} > \abs{z_1-z_2} > 0$ and may be analytically extended to the multivalued analytic function
\begin{equation}
\sum_{k = 1}^{\widetilde{M}} z_1^{\tilde{r}_k}z_2^{\tilde{s}_k}(\log z_1)^{\tilde{i}_k}(\log z_2)^{\tilde{j}_k}\tilde{f}_k\left(\frac{z_2}{z_1}\right)
\end{equation}
in the region $\abs{z_1} > \abs{z_2} > 0$.
\end{defn}

The convergence and extension property is part of the known sufficient conditions \cite[Thm.~10.3]{HLZ6}, \cite[Thm.~11.4]{HLZ7} and \cite[Thm.~3.1]{H6} for the existence of the associativity isomorphism.
\begin{theorem}[\cite{HLZ6,HLZ7,H6}] \label{assiso}
Let $V$ be a vertex operator algebra satisfying the following conditions:
\begin{enumerate}
\item For any two modules $W_1$ and $W_2$ in $\mathcal{C}$ and any $z \in \CC^{\times}$, if the generalized $V$-module $W_{\lambda}$ is generated by a generalized $L_{P(z)}'(0)$-eigenvector $\lambda \in (W_1 \otimes W_2)^{*}$ satisfying the $P(z)$-compatibility condition is lower-bounded,
then $W_{\lambda}$ is an object of $\mathcal{C}$. \label{Pzcomp}
\item The convergence and extension property holds for either the product or the iterates of intertwining operators for $V$. \label{convext}
\end{enumerate}
Then, for any $V$-modules $W_1, W_2$ and $W_3$ and any $z_1, z_2 \in \CC$ satisfying $\abs{z_1}>\abs{z_2}>\abs{z_1-z_2}>0$, there is a unique isomorphism
\begin{equation}
	\mathcal{A}_{P(z_1), P(z_2)}^{P(z_1-z_2), P(z_2)} \colon W_1 \boxtimes_{P(z_1)} (W_2 \boxtimes_{P(z_2)} W_3) \to (W_1 \boxtimes_{P(z_1-z_2)} W_2) \boxtimes_{P(z_2)} W_3
\end{equation}
such that for $w_{(1)} \in W_1$, $w_{(2)} \in W_2$ and $w_{(3)} \in W_3$, one has
\begin{equation}
\overline{\mathcal{A}}_{P(z_1), P(z_2)}^{P(z_1-z_2), P(z_2)}(w_1 \boxtimes_{P(z_1)} (w_2 \boxtimes_{P(z_2)} w_3))  = (w_1 \boxtimes_{P(z_1-z_2)} w_2) \boxtimes_{P(z_2)} w_3,
\end{equation}
where
\begin{equation}
\overline{\mathcal{A}}_{P(z_1), P(z_2)}^{P(z_1-z_2), P(z_2)}\colon \overline{W_1 \boxtimes_{P(z_1)} (W_2 \boxtimes_{P(z_2)} W_3)} \rightarrow \overline{(W_1 \boxtimes_{P(z_1-z_2)} W_2) \boxtimes_{P(z_2)} W_3}
\end{equation}
is the canonical extension of $\mathcal{A}_{P(z_1), P(z_2)}^{P(z_1-z_2), P(z_2)}$.
\end{theorem}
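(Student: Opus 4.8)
Since this theorem is \cite[Thm.~11.4]{HLZ7}, together with the weakening of hypotheses carried out in \cite[Thm.~3.1]{H6}, I will only outline the strategy one would follow. Fix $z_1,z_2\in\CC$ with $\abs{z_1}>\abs{z_2}>\abs{z_1-z_2}>0$. The plan is to build $\mathcal{A}_{P(z_1),P(z_2)}^{P(z_1-z_2),P(z_2)}$ out of the two universal properties in play: for each $W_4\in\mathcal{C}$, applying \cref{isoM} twice identifies $\Hom_V\bigl(W_1\boxtimes_{P(z_1)}(W_2\boxtimes_{P(z_2)}W_3),W_4\bigr)$ with a space of ``products'' $\langle\,\cdot\,,\mathcal{Y}_1(w_{(1)},z_1)\mathcal{Y}_2(w_{(2)},z_2)w_{(3)}\rangle$ of logarithmic intertwining operators, while $\Hom_V\bigl((W_1\boxtimes_{P(z_1-z_2)}W_2)\boxtimes_{P(z_2)}W_3,W_4\bigr)$ is identified with a space of ``iterates'' $\langle\,\cdot\,,\mathcal{Y}^1(\mathcal{Y}^2(w_{(1)},z_1-z_2)w_{(2)},z_2)w_{(3)}\rangle$. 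A natural isomorphism between these two $\Hom$-functors then produces $\mathcal{A}$ by the Yoneda lemma, and unwinding the identifications of \cref{isoM} gives the asserted formula on the distinguished vectors $w_1\boxtimes_{P(z_1)}(w_2\boxtimes_{P(z_2)}w_3)$.

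The analytic engine linking products and iterates is condition (2). First one shows that each matrix coefficient $\langle w_{(4)}',\mathcal{Y}_1(w_{(1)},z_1)\mathcal{Y}_2(w_{(2)},z_2)w_{(3)}\rangle$ is absolutely convergent for $\abs{z_1}>\abs{z_2}>0$; then the convergence-and-extension property exhibits it as a branch of a multivalued analytic function that, on $\abs{z_2}>\abs{z_1-z_2}>0$, is represented by an iterate of intertwining operators. On the overlap region $\abs{z_1}>\abs{z_2}>\abs{z_1-z_2}>0$ the product and the iterate are therefore two expansions of a single function, and since this matching commutes with post-composition by module maps it descends to the desired isomorphism of $\Hom$-functors. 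Condition (1) is what keeps everything inside $\mathcal{C}$: in the HLZ realisation each iterated tensor product is the contragredient of a subspace of $(W_1\otimes W_2)^*$ cut out by the $P(z)$-compatibility and $P(z)$-local-grading-restriction conditions, and one needs the generalized submodule generated by any $P(z)$-compatible, lower-bounded $L_{P(z)}'(0)$-eigenvector to again be an object of $\mathcal{C}$ --- which is exactly condition (1). Granting this, the local grading restriction condition propagates along the construction, the iterated $P(z)$-tensor products in the statement exist as objects of $\mathcal{C}$, and the analytic continuation above can be performed coefficientwise.

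The hard part is not producing the vector-space bijection between products and iterates, which is immediate from condition (2), but checking that it is realised by an honest isomorphism of $V$-module structures on the two sides. This is where the $P^{(1)}(z)$- and $P^{(2)}(z)$-intermediate tensor products of the HLZ machinery enter: one must verify that the two $P(z_i)$-actions transform correctly under the continuation, confirm the compatibility and grading-restriction conditions for the composite map, and construct the inverse of $\mathcal{A}$ by rerunning the whole argument with the roles of ``product'' and ``iterate'' interchanged. This bookkeeping is the substance of \cite[Thm.~10.3]{HLZ6}, \cite[Thm.~11.4]{HLZ7} and \cite[Thm.~3.1]{H6}, and beyond invoking those references there is nothing further to add here.
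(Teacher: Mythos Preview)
The paper does not prove this theorem at all --- it is quoted verbatim from the cited references \cite{HLZ6,HLZ7,H6} and used as a black box, with no proof or sketch provided. Your recognition that this is a cited result and your outline of the HLZ strategy are entirely appropriate; there is simply no proof in the paper to compare against.
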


Condition \ref{convext} in \cref{assiso} is guaranteed by the $C_1$-cofiniteness condition.
\begin{prop}\label{vice}
The convergence and extension property for products and iterates holds for $\ocfin$.
\end{prop}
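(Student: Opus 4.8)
The plan is to reduce the statement to the known principle that the \emph{convergence and extension property} holds for a pair (a product or an iterate) of logarithmic intertwining operators as soon as all the modules appearing in it are $C_1$-cofinite, and then to verify that hypothesis. First I would invoke \cref{virc}: since $\ocfin=\cC_1$, every object of $\ocfin$ is lower-bounded and $C_1$-cofinite, and $M(c,0)$ is itself $C_1$-cofinite because $M(c,0)/C_1(M(c,0))=\CC\one$ by \eqref{eq:C1Vir} (using $L_{-1}\one=0$). The intertwining operators relevant to the tensor structure on $\ocfin$ have all their modules in $\ocfin$; and even if one wishes to allow the intermediate module of a product or iterate to be arbitrary, \cref{isoM} shows that an intertwining operator of type $\binom{W_5}{W_2\,W_3}$ with $W_2,W_3\in\ocfin$ factors through $W_2\btimes_{P(z)}W_3\in\ocfin$ (by \cref{cor:closedness}), so one may replace $W_5$ by the $C_1$-cofinite image submodule (\cref{c1cofinite}\ref{qu}) without altering the matrix elements in the definition, and likewise for the outer operator. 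Thus all modules entering the product $\mathcal Y_1(\cdot,x_1)\mathcal Y_2(\cdot,x_2)$ or the iterate $\mathcal Y^1(\mathcal Y^2(\cdot,x_0)\cdot,x_2)$ may be assumed $C_1$-cofinite.

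Next I would run the differential-equations method of Huang, in the form valid for (possibly logarithmic) $C_1$-cofinite modules. Using the $C_1$-cofiniteness of $W_1,W_2,W_3$ one constructs, for fixed homogeneous $w_{(1)},w_{(2)},w_{(3)},w_{(4)}'$, a module over $\CC[z_1^{\pm1},z_2^{\pm1},(z_1-z_2)^{-1}]$ that is finitely generated and that controls the formal series $\langle w_{(4)}',\mathcal Y_1(w_{(1)},x_1)\mathcal Y_2(w_{(2)},x_2)w_{(3)}\rangle$; finite generation, via the $L_{-1}$-derivative property, forces this series to satisfy a system of ordinary differential equations in $z_1$ (with $z_2$ held fixed) whose only singular points are $z_1=0$, $z_1=z_2$, $z_1=\infty$, all regular singular, and similarly for the iterate. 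Hence the series converges on $\abs{z_1}>\abs{z_2}>0$ (resp.\ $\abs{z_2}>\abs{z_1-z_2}>0$), being the expansion of a genuine solution, and the classical theory of equations with regular singular points, together with the $L_0$-homogeneity of correlation functions, writes that solution near $z_1-z_2=0$ (resp.\ near $z_2=0$) as a finite sum
\begin{equation*}
\sum_k z_2^{r_k}(z_1-z_2)^{s_k}(\log z_2)^{i_k}(\log(z_1-z_2))^{j_k}f_k\!\left(\tfrac{z_1-z_2}{z_2}\right)\qquad\Bigl(\text{resp.}\ \sum_k z_1^{\tilde r_k}z_2^{\tilde s_k}(\log z_1)^{\tilde i_k}(\log z_2)^{\tilde j_k}\tilde f_k\!\left(\tfrac{z_2}{z_1}\right)\Bigr)
\end{equation*}
with $f_k,\tilde f_k$ analytic near $0$ (the $\log$ terms coming from the monodromy of the system and from the logarithmic intertwining operators themselves), where the lower bounds $\wt w_{(1)}+\wt w_{(2)}+s_k>N$ (resp.\ $\wt w_{(2)}+\wt w_{(3)}+\tilde s_k>\widetilde N$), with $N,\widetilde N$ depending only on the two intertwining operators, are read off from the $L_0$-grading of the finitely generated module (equivalently, from the indicial equations). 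This is exactly the convergence and extension property for products and for iterates; the details at this level of generality are those of \cite{Mi} and \cite{HLZ7}.

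The one genuinely non-formal step, hence the main obstacle, is the construction of these differential equations — that is, the finite-dimensionality statement underlying the ODE system — carried out in the logarithmic setting and without the $C_2$-cofiniteness that $M(c,0)$ does not enjoy; this is precisely the point at which one needs every module in the product or iterate, including those produced by fusion, to be $C_1$-cofinite. Since \cref{virc} and \cref{miyamoto} guarantee exactly that, the proposition follows once the reduction of the first paragraph is in place.
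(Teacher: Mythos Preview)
Your approach is correct and the same as the paper's: both reduce to the differential-equations criterion of \cite{HLZ7} (specifically \cite[Thm.~11.8]{HLZ7}) and verify $C_1$-cofiniteness via \cref{virc}. The paper's proof is two lines and also explicitly checks the second hypothesis of that theorem, namely $\dim\coprod_{\Re(n)<r}W_{[n]}<\infty$ for all $r\in\RR$ (immediate from finite length), which your sketch uses but does not state; your first paragraph on replacing intermediate modules is unnecessary, since the convergence and extension property as invoked in \cref{assiso} concerns intertwining operators among objects of $\ocfin$ to begin with.
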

\begin{proof}
It follows from \cite[Thm.~11.8]{HLZ7} that if all the objects $W \in \ocfin$ are $C_1$-cofinite and satisfy $\dim \coprod_{\Re(n) < r} W_{[n]} < \infty$, for any $r \in \RR$, then the convergence
and extension properties for products and iterates of intertwining operators hold.  The $C_1$-cofiniteness is guaranteed by \cref{virc} while the second condition is obvious because objects in $\ocfin$ have finite lengths.
\end{proof}

\begin{theorem}\label{thm:associativity}
The associativity isomorphism holds for the category $\ocfin$.
\end{theorem}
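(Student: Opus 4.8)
The approach is to verify the two hypotheses of \cref{assiso} for $V = M(c,0)$ and $\mathcal C = \ocfin$; the associativity isomorphism then follows immediately. Hypothesis \cref{convext} of \cref{assiso} --- the convergence and extension property for products and iterates of intertwining operators --- is exactly \cref{vice}, which has already been established. So the only thing left to prove is hypothesis \cref{Pzcomp}.

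Thus, fix $W_1, W_2 \in \ocfin$ and $z \in \CC^\times$, and let $\lambda \in (W_1 \otimes W_2)^*$ be a generalized $L'_{P(z)}(0)$-eigenvector satisfying the $P(z)$-compatibility condition, such that the generalized $M(c,0)$-module $W_\lambda$ it generates is lower-bounded; we must show $W_\lambda \in \ocfin$. By \cref{virc} it suffices to show that $W_\lambda$ is $C_1$-cofinite, since it is already lower-bounded. The plan is: (i) by \cref{virc}, $W_1$ and $W_2$ are grading-restricted of finite length, and together with the lower-boundedness of $W_\lambda$ this should force $W_\lambda$ itself to be grading-restricted, hence $W_\lambda \subseteq W_1 \hboxtr_{P(z)} W_2$; (ii) by \cref{cor:closedness} the $P(z)$-tensor product $W_1 \boxtimes_{P(z)} W_2 = (W_1 \hboxtr_{P(z)} W_2)'$ exists and lies in $\ocfin$, and dualising the inclusion from (i) exhibits $(W_\lambda)'$ as a quotient of $W_1 \boxtimes_{P(z)} W_2$; composing the quotient map with the canonical $P(z)$-intertwining map $\boxtimes_{P(z)} \colon W_1 \otimes W_2 \to \overline{W_1 \boxtimes_{P(z)} W_2}$ then yields a surjective logarithmic intertwining operator of type $\binom{(W_\lambda)'}{W_1\,W_2}$; (iii) since $W_1, W_2 \in \cC_1$ and $(W_\lambda)'$ is lower-bounded (being the contragredient of the lower-bounded $W_\lambda$, which has the same graded support), Miyamoto's \cref{miyamoto} gives $(W_\lambda)' \in \cC_1 = \ocfin$; (iv) as $\ocfin$ is closed under contragredient duals, $W_\lambda = ((W_\lambda)')' \in \ocfin$, as required. (Alternatively, hypothesis \cref{Pzcomp} may simply be quoted from the applicability theorem \cite[Thm.~3.1]{H6}, whose assumptions hold because \cref{virc} realizes $\ocfin$ as a category of $C_1$-cofinite, finite-length modules closed under contragredients and, by \cref{cor:closedness}, under $P(z)$-tensor products.)

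The main obstacle is step (i): showing that the module generated inside $(W_1 \otimes W_2)^*$ by a single compatible generalized eigenvector is grading-restricted once it is known to be lower-bounded --- equivalently, that it satisfies the full $P(z)$-local grading restriction condition --- so that it embeds in $W_1 \hboxtr_{P(z)} W_2$ and the duality in step (ii) applies. This is a finiteness statement about the $\tau_{P(z)}$-action on $(W_1 \otimes W_2)^*$ that relies on $W_1$ and $W_2$ having finite-dimensional graded pieces, and it is precisely here that the identification $\cC_1 = \ocfin$ of \cref{virc} --- which upgrades $C_1$-cofiniteness of the generators to finite length and grading-restriction --- does its work; it belongs to the same circle of ideas as the Miyamoto-type cofiniteness statement in \cref{miyamoto} and Huang's applicability theorem \cite{H6}. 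Once step (i) is in hand, steps (ii)--(iv) are formal.
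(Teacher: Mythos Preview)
Your overall plan --- obtain a surjective intertwining operator of type $\binom{(W_\lambda)'}{W_1\,W_2}$, apply Miyamoto's \cref{miyamoto} to conclude $(W_\lambda)' \in \cC_1 = \ocfin$, then dualize --- is exactly the paper's strategy, and your parenthetical appeal to \cite[Thm.~3.1]{H6} is also in the spirit of how the paper frames things.

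The difference is that the paper \emph{does not} go through your step (i). You try to first embed $W_\lambda$ in $W_1 \hboxtr_{P(z)} W_2$ (which would require proving the local grading-restriction condition for $W_\lambda$), then dualize to a quotient of $W_1 \boxtimes_{P(z)} W_2$, then compose with the tautological intertwining map. The paper bypasses this entirely: the inclusion $W_\lambda \hookrightarrow (W_1 \otimes W_2)^*$ intertwines the $\tau_{P(z)}$-action by construction, and \cite[Prop.~5.24]{HLZ4} converts \emph{any} such $\tau_{P(z)}$-intertwining map directly into a $P(z)$-intertwining map of type $\binom{(W_\lambda)'}{W_1\,W_2}$, hence (via \cite[Prop.~4.8]{HLZ3}) into a logarithmic intertwining operator. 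Surjectivity follows because $\lambda$ generates $W_\lambda$. No grading-restriction of $W_\lambda$, and no reference to $W_1 \hboxtr_{P(z)} W_2$ or to the existence of $W_1 \boxtimes_{P(z)} W_2$, is needed at this point.

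So what you flag as the ``main obstacle'' is in fact an unnecessary detour: you are trying to pass through the universal object $W_1 \hboxtr_{P(z)} W_2$ when the HLZ correspondence already gives the required intertwining operator from the raw inclusion into $(W_1 \otimes W_2)^*$. Once you replace step (i)--(ii) by this direct appeal to \cite[Prop.~5.24]{HLZ4}, your steps (iii)--(iv) match the paper verbatim.
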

\begin{proof}
Given \cref{vice}, we only need to show that Condition \ref{Pzcomp} in \cref{assiso} holds in $\ocfin$. Let $W_{\lambda}$ be the lower-bounded generalized $V$-module constructed in \cref{assiso}.

The inclusion map $W_{\lambda} \subset (W_1\otimes W_2)^*$ intertwines the action of $V \otimes \iota_{+}{\CC}[t,t^{- 1}, (z^{-1}-t)^{-1}]$ given by $\tau_{P(z)}$. Therefore, by \cite[Prop.~5.24]{HLZ4}, it corresponds to a $P(z)$-intertwining map $I$ of type $\binom{W_{\lambda}'}{W_1\,W_2}$ such that
\begin{equation}
\lambda(w_1\otimes w_2)=\langle \lambda, I(w_1\otimes w_2)\rangle.
\end{equation}
By \cite[Prop.~4.8]{HLZ3}, there is an intertwining
operator $\mathcal{Y}$ of type $\binom{W_{\lambda}'}{W_1\,W_2}$ such that
\begin{equation}
\mathcal{Y}(w_1, z)w_2 = I(w_1\otimes w_2).
\end{equation}
Note that this intertwining operator is surjective since
\begin{equation}
\langle \lambda, \mathcal{Y}(w_1, z)w_2\rangle = \langle \lambda, I(w_1\otimes w_2)\rangle = \lambda(w_1\otimes w_2).
\end{equation}
By \cref{miyamoto}, $W_{\lambda}'$ is $C_1$-cofinite since $W_1$ and $W_2$ are $C_1$-cofinite. Then, since $W_{\lambda}' $ is $C_1$-cofinite and lower-bounded, it follows from \cref{virc} that $W_{\lambda}' \in \ocfin$ and thus $W_{\lambda} \in \ocfin$.
\end{proof}

By Corollary \ref{cor:closedness}, Theorem \ref{thm:associativity} and \cite[Thm.~12.15]{HLZ8}, we obtain the main \lcnamecref{thm:tensor} of this section.
\begin{theorem}\label{thm:tensor}
The category $\ocfin$ has a braided tensor category structure.
\end{theorem}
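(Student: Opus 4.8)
The plan is to deduce \cref{thm:tensor} directly from the logarithmic tensor category theory of Huang, Lepowsky and Zhang, whose culminating result \cite[Thm.~12.15]{HLZ8} equips a category of grading-restricted generalized modules with a braided tensor structure as soon as a short list of hypotheses is met. So the proof amounts to checking each hypothesis for $\ocfin$ and then invoking that theorem; essentially all of the real content has already been assembled in the preceding sections.

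First I would record the closure properties. The category $\ocfin$ consists of grading-restricted generalized $M(c,0)$-modules (finite-length modules with the prescribed composition factors are automatically lower-bounded with finite-dimensional weight spaces) and, as noted at the start of this section, it is closed under finite direct sums, generalized submodules, quotient generalized modules and contragredient duals; this uses the embedding structure of \cref{cor:embeddings} together with \cref{noc1} and, crucially, the identification $\cC_1 = \ocfin$ of \cref{virc}. Next, for every $z \in \CC^\times$ and every pair $W_1, W_2 \in \ocfin$, the $P(z)$-tensor product $W_1 \btimes_{P(z)} W_2$ exists and again lies in $\ocfin$: this is precisely \cref{cor:closedness}, which rests on Miyamoto's theorem \cref{miyamoto} and \cref{virc}.

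Then I would invoke the associativity isomorphism. \cref{thm:associativity} supplies the natural isomorphisms $\mathcal{A}^{P(z_1-z_2),P(z_2)}_{P(z_1),P(z_2)}$ with the required compatibility with the canonical intertwining maps, having verified the two hypotheses of \cref{assiso} in $\ocfin$: Condition~\ref{convext} (the convergence and extension property) is \cref{vice}, itself a consequence of $C_1$-cofiniteness via \cite[Thm.~11.8]{HLZ7}, and Condition~\ref{Pzcomp} (the $P(z)$-compatibility condition) is checked in the proof of \cref{thm:associativity} using \cref{miyamoto} and \cref{virc} once more. With closedness and associativity in hand, the remaining ingredients of a braided tensor structure — the unit isomorphisms with $M(c,0)$ as tensor identity, the commutativity (braiding) isomorphism built from the skew-symmetry of intertwining operators, and the triangle, pentagon and hexagon coherence axioms — are furnished automatically by the Huang--Lepowsky--Zhang framework; the only analytic input they require, namely convergence of the relevant products and iterates of intertwining operators, is again covered by \cref{vice}. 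Collecting these verifications, all hypotheses of \cite[Thm.~12.15]{HLZ8} are satisfied, so $\ocfin$ is a braided tensor category.

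The step I expect to be the genuine obstacle is not in this proof at all but upstream: it is the equivalence $\cC_1 = \ocfin$ of \cref{virc} — which replaces the awkward task of showing directly that lower-bounded $C_1$-cofinite modules form an abelian category closed under contragredients by the transparent finite-length description, thereby making Miyamoto's closure result usable — together with the verification of the $P(z)$-compatibility condition inside \cref{thm:associativity}, which also leans on \cref{virc}. Once those are available, the proof of \cref{thm:tensor} itself is essentially bookkeeping: assemble \cref{cor:closedness}, \cref{thm:associativity} and \cref{vice}, and quote \cite[Thm.~12.15]{HLZ8}.
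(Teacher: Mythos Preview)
Your proposal is correct and follows essentially the same route as the paper: the paper's proof is a one-line citation of \cref{cor:closedness}, \cref{thm:associativity} and \cite[Thm.~12.15]{HLZ8}, and you have simply unpacked what each of those ingredients contributes. Your diagnosis that the real work lies upstream in \cref{virc} (and in the verification of Condition~\ref{Pzcomp} inside the proof of \cref{thm:associativity}) is also accurate.
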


\begin{remark}
There are many open conjectures about the representation categories of the singlet and triplet algebras \cite{CGan, CM, CMR, CGR, RiW}, but the most basic one is the existence of a rigid vertex tensor category structure on the category of finite-length modules \cite{CMR}. Since the singlet (and triplet) vertex operator algebras are objects in the Ind-completion of $\ocfin$ for $c = 1 - 6(p-1)^2/p$, so $t = 1/p$, our results may be used to study the vertex tensor category of modules containing all known indecomposable but reducible modules for the singlet algebra.
\end{remark}

\section{Rigidity for generic central charge} \label{sec:rigid}
In this section, we will study the category $\ocfin$ for generic central charges $c = 13 - 6t - 6t^{-1}$, meaning that $t \notin \QQ$, and prove that it is rigid. The simple objects of this category have the form $L(c, h_{r,s})$ for $r, s \in \ZZ_{\ge1}$.
Set $t = k+2$, so that $k \notin \QQ$, and recall the notation $L_{r,s} = L(c, h_{r,s})$ for $r, s \in \ZZ_{\ge1}$ from \cref{sec:vir}. The condition that $c$ is generic is understood to be in force for the rest of the \lcnamecref{sec:rigid} unless otherwise noted.

\subsection{Semisimplicity}
We start by establishing that $\ocfin$ is semisimple for generic central charges.

\begin{lemma}\label{lemma:noext}
If $h \neq h' \in H_c$, then
\begin{equation}
\Ext^1_{\ocfin}(L(c, h'), L(c, h)) = 0.
\end{equation}
\end{lemma}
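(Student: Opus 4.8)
The plan is to show that every short exact sequence
$0 \to L(c,h) \xrightarrow{\iota} W \xrightarrow{\pi} L(c,h') \to 0$
with $W \in \ocfin$ splits. First I would reduce to a convenient case by duality. Recall that $\ocfin$ is closed under contragredients and that $(-)'$ is an exact anti-equivalence of $\ocfin$ fixing each simple object (a simple highest-weight module being self-contragredient). Applying $(-)'$ to the sequence above yields $0 \to L(c,h') \to W' \to L(c,h) \to 0$, so $\Ext^1_{\ocfin}(L(c,h'),L(c,h)) \cong \Ext^1_{\ocfin}(L(c,h),L(c,h'))$. As $h \neq h'$, the integers $h'-h$ and $h-h'$ cannot both be positive, so after possibly interchanging $h$ and $h'$ it suffices to treat an extension $0 \to L(c,h) \to W \to L(c,h') \to 0$ with $h' - h \notin \ZZ_{\ge1}$ and $h \neq h'$.

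Next I would lift the cyclic vector of the quotient to a highest-weight vector of $W$. Choose $\tilde{w} \in W_{[h']}$ with $\pi(\tilde{w})$ the highest-weight vector of $L(c,h')$. Since $L(c,h)$ is lower bounded with minimal conformal weight $h$, while $h \neq h'$ and $h'-h \notin \ZZ_{\ge1}$, one has $\iota(L(c,h))_{[h'-n]} = 0$ for every $n \in \NN$: for $n \ge 1$ a nonzero such weight space would force $h'-h \ge n$, and for $n = 0$ it would force $h' = h$. Hence $(L_0 - h')\tilde{w} \in \iota(L(c,h))_{[h']} = 0$, so $\tilde{w}$ is a genuine $L_0$-eigenvector, and $L_n \tilde{w} \in \iota(L(c,h))_{[h'-n]} = 0$ for all $n \ge 1$. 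Thus $\tilde{w}$ is a highest-weight vector of conformal weight $h'$.

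Now set $U = \uea{\vir}\tilde{w} \subseteq W$. Then $\pi(U) = L(c,h')$, so $U + \iota(L(c,h)) = W$, while $U \cap \iota(L(c,h))$ is a submodule of the simple module $\iota(L(c,h))$ and so is either $0$ or all of it. If it is $0$, then $W = U \oplus \iota(L(c,h))$ and the sequence splits, as desired. If it equals $\iota(L(c,h))$, then $U = W$, so $W$ is a highest-weight module of highest weight $h'$; being an object of $\ocfin$ it is $C_1$-cofinite, hence not a Verma module by \cref{noc1}\ref{it:c1notverma}. Writing $h' = h_{r',s'}(t)$ and using $t \notin \QQ$, \cref{exhaustiveembedding,cor:embeddings} show that the embedding diagram of $V_{r',s'}$ is $V_{r',s'} \longleftarrow V_{-r',s'}$ with $V_{-r',s'}$ irreducible, so $V_{-r',s'}$ is the unique maximal proper submodule of $V_{r',s'}$; therefore $W \cong V_{r',s'}/V_{-r',s'} = L_{r',s'}$ is simple. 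This contradicts the fact that $\iota(L(c,h))$ is a nonzero \emph{proper} submodule of $W$ (proper because $W/\iota(L(c,h)) \cong L(c,h') \neq 0$). Hence this case cannot occur and the extension always splits, proving $\Ext^1_{\ocfin}(L(c,h'),L(c,h)) = 0$.

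The only delicate point is the construction of the highest-weight vector $\tilde{w}$: it is precisely here that the hypothesis $h \neq h'$, together with the reduction $h'-h \notin \ZZ_{\ge1}$ via contragredient duality, is indispensable, since otherwise a lift of the cyclic vector of $L(c,h')$ need not be an $L_0$-eigenvector (the obstruction being the ``staggered''-type self-extensions that genuinely occur). Everything else is bookkeeping around the generic embedding structure of Verma $\vir$-modules (\cref{exhaustiveembedding}) together with the characterisation of $C_1$-cofinite highest-weight $M(c,0)$-modules (\cref{noc1}).
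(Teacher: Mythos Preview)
Your proof is correct and follows essentially the same approach as the paper: both reduce via contragredient duality to the case where a lift of the cyclic vector of $L(c,h')$ is automatically a highest-weight vector of $W$, then use the generic embedding structure (\cref{exhaustiveembedding}) together with \cref{noc1}\ref{it:c1notverma} to conclude that the highest-weight submodule it generates is simple, forcing a splitting. The only cosmetic difference is that the paper phrases the reduction as $\Re(h') < \Re(h)$ (after disposing of $\Im(h)\neq\Im(h')$), whereas you reduce to $h'-h \notin \ZZ_{\ge1}$; your condition is the sharper one and your write-up spells out the splitting dichotomy ($U\cap\iota(L(c,h))=0$ versus $U=W$) more explicitly than the paper does.
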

\begin{proof}
Suppose there is a short exact sequence
\begin{equation}\label{exseq}
0 \rightarrow L(c,h) \rightarrow M \rightarrow L(c,h') \rightarrow 0.
\end{equation}
If $\Im(h) \neq \Im(h')$, then this sequence obviously splits, so we may assume that $\Im(h) = \Im(h')$.

If $\Re(h') < \Re(h)$, then there is a \hw\ vector of conformal weight $h'$ in $M$. From \cref{exhaustiveembedding}, the singular vector $v_{h'}$ either generates a Verma module $V(c, h')$ or its simple quotient $L(c, h')$. But, it has to be $L(c, h')$ because $V(c, h')$ is not in $\ocfin$. The exact sequence \eqref{exseq} therefore splits.

If $\Re(h') > \Re(h)$, we consider the contragredient dual of $M$, recalling that $L(c, h)$ and $L(c,h')$ are self-dual, arriving at the short exact sequence
\begin{equation}
0 \rightarrow L(c,h') \rightarrow M' \rightarrow L(c,h) \rightarrow 0.
\end{equation}
The previous argument then gives $M' \cong L(c, h) \oplus L(c, h')$, hence $M \cong L(c, h) \oplus L(c, h')$ too.
\end{proof}

\begin{theorem}\label{thm:semisimple}
The category $\ocfin$ is semisimple for generic central charges.
\end{theorem}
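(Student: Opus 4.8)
The plan is to show that every object of $\ocfin$ is a direct sum of simple objects. Since $\ocfin$ consists of finite-length modules, it suffices to prove that there are no nonsplit extensions between simple objects, i.e.\ that $\Ext^1_{\ocfin}(L(c,h'),L(c,h)) = 0$ for all $h, h' \in H_c$. The case $h \neq h'$ is precisely the content of \cref{lemma:noext}, so the only remaining case is the \emph{self-extension} case $h = h'$, which must be handled separately.

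\textbf{Self-extensions.} First I would reduce the claim that a finite-length module is semisimple to the vanishing of all $\Ext^1$ groups between composition factors by a standard induction on length: if $0 \to U \to M \to L \to 0$ with $U$ semisimple and $L$ simple, then $M$ splits provided $\Ext^1(L, L_i) = 0$ for each simple summand $L_i$ of $U$, and one iterates. Thus the whole theorem reduces to checking $\Ext^1_{\ocfin}(L(c,h), L(c,h)) = 0$ in addition to \cref{lemma:noext}. The heart of the matter is therefore to rule out a nonsplit self-extension
\begin{equation}
0 \to L(c,h) \to M \to L(c,h) \to 0, \qquad h \in H_c.
\end{equation}
Here $M$ has a single $L_0$-generalized-eigenvalue $h$ at the top, with a two-dimensional generalized eigenspace. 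The approach is to exploit the embedding structure from \cref{exhaustiveembedding}: for $t \notin \QQ$, the reducible Verma module $V(c,h) = V_{r,s}$ has the especially simple embedding diagram $V_{r,s} \longleftarrow V_{-r,s}$, so that $L(c,h) = V_{r,s}/V_{-r,s}$ has length two with composition factors $L_{r,s}$ (the top) and $L_{-r,s}$ (a single simple socle, corresponding to $h' = h_{-r,s}(t) \notin H_c$ in general). I would pick a highest-weight vector $\tilde{w}$ of weight $h$ lifting the generator of the quotient $L(c,h)$; it is annihilated by $L_n$ for $n>0$, hence generates a highest-weight submodule $N \subseteq M$, which must be a quotient of $V(c,h)$. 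Because $M \in \ocfin$, $N$ cannot be the full Verma module $V(c,h)$ (which is not $C_1$-cofinite by \cref{noc1}\ref{it:c1notverma}); so $N$ is a proper quotient, and the only highest-weight quotients of $V(c,h)$ lying in $\ocfin$ are $L(c,h)$ itself. This forces $N \cong L(c,h)$ to be a complement to the sub-$L(c,h)$, giving the splitting.

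\textbf{Main obstacle.} The delicate point is establishing the existence of a genuine highest-weight vector $\tilde w$ of weight exactly $h$ in the top copy, rather than a vector that is only annihilated after passing to a quotient. In the generalized-eigenspace setting, $L_0$ need not act semisimply on $M_{[h]}$, so I must argue that one can still choose a lift $\tilde w$ with $L_n \tilde w = 0$ for all $n > 0$. This uses the singular-vector structure: the singular vector $U \one_{c,h}$ (with $U = L_{-1}^{rs} + \cdots$ from \cref{sing}, of weight $h + rs$) that generates the maximal submodule of $V(c,h)$ gives the relation making the quotient $L(c,h)$; lifting this relation to $M$ and comparing weights, together with \cref{thm:vermared}\ref{it:hwfinlen}'s uniqueness of singular vectors at each level, pins down whether the extension can be nonsplit. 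I expect the cleanest route is to invoke contragredient self-duality of $L(c,h)$ (noted already in the proof of \cref{lemma:noext}) to symmetrize the problem, combined with the observation that a nonsplit self-extension would force a highest-weight vector generating a proper quotient of $V(c,h)$ strictly larger than $L(c,h)$ but still in $\ocfin$, contradicting \cref{cor:embeddings}\ref{it:vermasubmodules} and the $t \notin \QQ$ embedding diagram $V_{r,s} \longleftarrow V_{-r,s}$ which shows $V(c,h)$ has a \emph{unique} maximal proper submodule and hence a unique highest-weight quotient of finite length. Once the self-extension is excluded, the theorem follows immediately by the length induction above.
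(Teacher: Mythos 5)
Your reduction to the vanishing of $\Ext^1$ between composition factors, and your treatment of the case $h \neq h'$ via \cref{lemma:noext}, agree with the paper. Your handling of the self-extension case is also correct \emph{in the sub-case where a genuine highest-weight lift exists}: if $0 \to L(c,h) \to M \to L(c,h) \to 0$ and some lift $\widetilde{w} \in M_{[h]}$ of the quotient generator is an honest $L_0$-eigenvector, then (since $M_{[h-n]}=0$ for $n \ge 1$ forces $L_n\widetilde{w}=0$ automatically) the submodule $U(\vir)\widetilde{w}$ is a highest-weight quotient of $V(c,h)$; as $L(c,h)$ occurs only once in $V(c,h)$ by the $t \notin \QQ$ embedding diagram $V_{r,s} \longleftarrow V_{-r,s}$ of \cref{exhaustiveembedding}, it must be $L(c,h)$ itself and the sequence splits. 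But this is the easy half, and here lies the genuine gap: nothing in your argument excludes the sub-case $(L_0 - h)\widetilde{w} = \alpha w$ with $\alpha \neq 0$, i.e.\ a \emph{staggered} self-extension on which $L_0$ acts non-semisimply. In that situation $\widetilde{w}$ generates all of $M$ but is \emph{not} a highest-weight vector, so your claimed contradiction --- that a nonsplit self-extension ``would force a highest-weight vector generating a proper quotient of $V(c,h)$ strictly larger than $L(c,h)$'' --- never materializes: the only genuine highest-weight vectors of weight $h$ in $M$ are the multiples of $w$, and they generate exactly the sub-copy $L(c,h)$, consistently with the uniqueness of the maximal submodule of $V(c,h)$. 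Contragredient self-duality does not help either, since staggered self-extensions are compatible with taking duals.

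Excluding this non-semisimple case is a nontrivial obstruction computation, not a soft structural argument: one must show that the singular descendant $U\widetilde{w}$, with $U = L_{-1}^{rs} + \cdots$ as in \cref{sing}, cannot be corrected by any term $Xw$, $X \in \uea{\virn}$, so as to become singular, and whether this obstruction vanishes depends quantitatively on $(c,h)$. Indeed, the paper explicitly warns that for non-generic central charges certain $L(c,h)$ \emph{do} admit nonsplit self-extensions (classified in \cite{KRi}), so any argument relying only on the shape of embedding diagrams and uniqueness of maximal submodules is bound to be incomplete. The paper's own proof is correspondingly short: it combines \cref{lemma:noext} with the cited results \cite[Thm.~6.4]{R}, \cite[Prop.~7.5]{KRi} and \cite[Lem.~5.2.2]{GK}, which carry out precisely the self-extension vanishing at generic $c$ that your proposal leaves unproved. (Two minor slips: it is $V_{r,s}$, not $L(c,h)$, that has length two with factors $L_{r,s}$ and $L_{-r,s}$; and uniqueness of singular vectors at a given level is \cref{thm:vermared}, not \cref{cor:embeddings}\ref{it:hwfinlen}.)
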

\begin{proof}
 \cref{lemma:noext} shows that there are no extensions between non-isomorphic simple objects.  But, \cite[Thm.~6.4]{R} and \cite[Prop.~7.5]{KRi} (see also \cite[Lem.~5.2.2]{GK} for a high-powered approach) have shown that there are no self-extensions of $L(c,h)$ for generic central charges.
\end{proof}
\noindent Despite claims to the contrary in the literature, it is interesting that certain simples do admit self-extensions for non-generic central charges.  The $L(c,h)$ that do are classified in \cite{KRi}.

\subsection{Fusion rules} \label{sec:fusion}
We next determine the fusion rules of the category $\ocfin$, for generic $c$, using the Zhu algebra tools developed in \cite{FZ1,FZ2}, see also \cite{Li}.  From \cite{FZ1,W}, we know that the Zhu algebra of $M(c,0)$ is $A(M(c,0)) \cong \CC[x]$, where $x=[\omega]$.
Moreover, we have $A(V(c,h)) \cong \CC[x,y]$ as a $\mathbb{C}[x]$-bimodule \cite{DMZ,Li}.

The fusion rules involving the simple $M(c,0)$-modules $L_{1,s}$ were computed by I.~Frenkel and M.~Zhu in \cite[Prop.~2.24]{FZ2} using the explicit singular vector formula of Benoit and Saint-Aubin \cite{BSA}.

\begin{remark}
Note that it follows from \cref{isoM}, and the discussion that precedes it, that the fusion rules computed by  I.~Frenkel and M.~Zhu in \cite{FZ2} are indeed, in light of \cref{thm:tensor,thm:semisimple}, the fusion product coefficients in the vertex tensor category.
\end{remark}

Combined with our \cref{thm:tensor,thm:semisimple}, their result may be phrased as follows.
\begin{theorem}[\cite{FZ2}] \label{thm:1sfusion}
	Let $c$ be generic.  Then, for $s_1, s_2, s_3 \in \ZZ_{\ge1}$, we have
	\begin{equation} \label{fr:L1s}
		L_{1,s_1} \btimes L_{1,s_2} \cong \bigoplus_{\mathclap{\substack{s_3 = \abs{s_1-s_2}+1 \\ s_3=s_1+s_2-1 \bmod{2}}}}^{s_1+s_2-1} L_{1,s_3}.
	\end{equation}
\end{theorem}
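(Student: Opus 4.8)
The plan is to deduce this purely formally from three ingredients already in place: the Frenkel--Zhu computation of spaces of intertwining operators, the semisimplicity of $\ocfin$ for generic $c$ (\cref{thm:semisimple}), and the identification of fusion product coefficients with dimensions of intertwining operator spaces (\cref{isoM}). The actual analytic/algebraic content, namely the evaluation of the relevant spaces of intertwining operators, is taken as input from \cite[Prop.~2.24]{FZ2}: using the Zhu algebra $A(M(c,0)) \cong \CC[x]$ and the bimodule $A(V(c,h)) \cong \CC[x,y]$, together with the explicit Benoit--Saint-Aubin formula for the singular vector of $V(c,h_{1,s})$, Frenkel and Zhu show that for generic $c$ and any simple $M(c,0)$-module $W$ one has $\dim \mathcal{V}_{L_{1,s_1}\,L_{1,s_2}}^{W} = 1$ if $W \cong L_{1,s_3}$ with $\abs{s_1-s_2}+1 \le s_3 \le s_1+s_2-1$ and $s_3 \equiv s_1+s_2-1 \pmod 2$, and $\dim \mathcal{V}_{L_{1,s_1}\,L_{1,s_2}}^{W} = 0$ for every other simple $W$ (in particular for every $L_{r,s}$ with $r \ge 2$); since $M(c,0) = L(c,0)$ for generic $c$ by \cref{cor:embeddings}, there is no universal-versus-simple subtlety.

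Granting this, the argument runs as follows. First, by \cref{cor:closedness} the fusion product $L_{1,s_1} \btimes L_{1,s_2}$ is an object of $\ocfin$, hence by \cref{thm:semisimple} it is a finite direct sum $\bigoplus_{h \in H_c} L(c,h)^{\oplus m_h}$. Second, in a semisimple category the multiplicity $m_h$ of a simple object $L(c,h)$ in this direct sum equals $\dim \Hom_{M(c,0)}\bigl(L_{1,s_1} \btimes_{P(z)} L_{1,s_2},\, L(c,h)\bigr)$, and by \cref{isoM} (applied to the $P(z)$-tensor product, which exists by \cref{cor:closedness}) this Hom-space is isomorphic to $\mathcal{V}_{L_{1,s_1}\,L_{1,s_2}}^{L(c,h)}$. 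Third, substituting the Frenkel--Zhu values recalled above gives $m_h = 1$ exactly when $h = h_{1,s_3}$ with $s_3$ in the stated range and parity, and $m_h = 0$ otherwise, which is precisely \eqref{fr:L1s}.

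The only genuine difficulty is the Frenkel--Zhu computation itself, that is, extracting $\dim \mathcal{V}_{L_{1,s_1}\,L_{1,s_2}}^{W}$ from the Zhu bimodule $A(V(c,h_{1,s}))$ modulo the ideal generated by the (Benoit--Saint-Aubin) singular vector, and in particular verifying the vanishing for all $L_{r,s}$ with $r \ge 2$ so that the sum in \eqref{fr:L1s} really is indexed only by the $s_3$-values shown. As we may cite \cite[Prop.~2.24]{FZ2} for this, the contribution of the present argument is exactly the passage from ``dimension of the space of intertwining operators'' to ``coefficient in the fusion product'', supplied by \cref{isoM} and \cref{thm:semisimple}; the triangle-inequality bounds and the parity condition on $s_3$ are inherited verbatim from that computation.
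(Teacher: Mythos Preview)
Your proposal is correct and matches the paper's approach exactly: the paper does not give a standalone proof of \cref{thm:1sfusion} but rather cites \cite[Prop.~2.24]{FZ2} for the intertwining-operator dimensions and then remarks (in the paragraph and \cref{isoM}-based Remark immediately preceding the theorem) that combining this with \cref{thm:tensor} and \cref{thm:semisimple} upgrades those dimensions to fusion-product coefficients. You have simply spelled out that remark in detail.
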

\noindent The obvious analogue for $L_{r_1,1} \btimes L_{r_2,1}$ also holds.

Our interest, however, is in the fusion of $L_{r,1}$ with $L_{1,s}$. This can also be attacked using the same methods, in particular the results \cite[Lems.~2.9 and 2.14]{FZ2} which we quote for convenience.
\begin{prop}[\cite{FZ2}] \label{prop:FZ2}
	\leavevmode
	\begin{enumerate}
		\item As an $A(M(c,0))$-bimodule, we have
		\begin{equation}
			A(L_{r,s}) \cong \frac{\CC[x,y]}{\left\langle f_{r,s}(x,y) \right\rangle},
		\end{equation}
		where $f_{r,s}(x,y)$ is the image of the non-cyclic singular vector of $V_{r,s}$ in $A(V_{r,s})$.
		\item Let $\overline{n}$ denote the residue of $n \in \ZZ$ modulo $2$.  Then, for $r,s \in \ZZ_{\ge1}$, we have
		\begin{equation}
			f_{r,1} = g_r' g_{r-2}' \dots g_{\overline{r}}'
			\quad \text{and} \quad
			f_{1,s} = g_s g_{s-2} \dots g_{\overline{s}},
		\end{equation}
		where $g_1(x,y) = g_1'(x,y) = x-y$ and, for $r,s \in \ZZ_{\ge2}$,
		\begin{equation}
			\begin{aligned}
				g_r'(x,y) &= (x-y-h_{r,1})(x-y-h_{-(r-2),1}) - (r-1)^2 t y \\
				\text{and} \quad
				g_s(x, y) &= (x-y-h_{1,s})(x-y-h_{1,-(s-2)}) - (s-1)^2 t^{-1} y.
			\end{aligned}
		\end{equation}
	\end{enumerate}
\end{prop}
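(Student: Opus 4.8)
The plan is to reduce both assertions to computing the image of the non-cyclic singular vector of $V_{r,s}$ under the canonical projection $\pi_{r,s}\colon V_{r,s}\to A(V_{r,s})$ onto Zhu's bimodule, and then, for the second part, to make that image explicit.

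For part~(1): since $c$ is generic throughout this section, the embedding diagram of $V_{r,s}$ is simply $V_{r,s}\longleftarrow V_{-r,s}$ (the third case of \cref{exhaustiveembedding}), so the maximal proper submodule $J_{r,s}$ of $V_{r,s}$ is the image of that embedding; in particular $J_{r,s}$ is the $\vir$-submodule generated by the (unique, by \cref{thm:vermared}) singular vector $v_{r,s}$ of conformal weight $h_{r,s}+rs$, and $J_{r,s}$ is itself a Verma module with highest-weight vector $v_{r,s}$. Standard properties of Zhu's construction (see \cite{FZ1,Li}) give $A(L_{r,s})=A(V_{r,s}/J_{r,s})\cong A(V_{r,s})/\pi_{r,s}(J_{r,s})$, and the bimodule homomorphism $A(J_{r,s})\to A(V_{r,s})$ induced by the inclusion $J_{r,s}\hookrightarrow V_{r,s}$ has image $\pi_{r,s}(J_{r,s})$. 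Because $J_{r,s}$ is a Verma module, its bimodule $A(J_{r,s})\cong\CC[x,y]$ is generated over $A(M(c,0))$ by the class of its highest-weight vector; hence $\pi_{r,s}(J_{r,s})$ is the sub-bimodule of $A(V_{r,s})\cong\CC[x,y]$ generated by $f_{r,s}(x,y):=\pi_{r,s}(v_{r,s})$, where the identification $A(V(c,h))\cong\CC[x,y]$ is the one recalled from \cite{DMZ,Li} (with $x$ and $y$ implementing the left and right actions of $[\omega]$). Assembling these gives $A(L_{r,s})\cong\CC[x,y]/\langle f_{r,s}\rangle$; a brief check that the highest-weight vector of $J_{r,s}$ maps to $[v_{r,s}]$ confirms that $f_{r,s}$ is the stated image.

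For part~(2): the two families are exchanged by the duality $t\leftrightarrow t^{-1}$, $r\leftrightarrow s$ of $M(c,0)$ (under which $c$ is invariant, $h_{r,s}\leftrightarrow h_{s,r}$ and $g_r'\leftrightarrow g_s$), so it is enough to establish the formula for $f_{r,1}$. The route is to insert the Benoit--Saint-Aubin closed-form expression \cite{BSA} for the singular vector $v_{r,1}$ --- the families $v_{r,1}$, $v_{1,s}$ being exactly those for which a workable explicit formula is available --- and to push it through $\pi_{r,1}$ using the recursion that computes the image in $\CC[x,y]$ of a Poincar\'e--Birkhoff--Witt monomial $L_{-n_1}\cdots L_{-n_k}\one_{c,h}$ from those of smaller total weight; this recursion comes from Zhu's defining relations applied to $\omega$, which modulo those relations express each $L_{-n}(\,\cdot\,)$ in terms of left multiplication by $x=[\omega]$, the $L_0$-grading, and lower-order terms. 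The resulting polynomial has degree $r$, with top-degree part pinned down by \cref{sing} (the coefficient of $L_{-1}^{r}$ in $v_{r,1}$ is $1$), and one shows it factors. I would identify the factors by combining this leading-term information with the recursion $f_{r,1}=g_r'\cdot f_{r-2,1}$ --- base cases $f_{1,1}=\pi(L_{-1}\one_{c,0})=x-y$ (consistent with $A(M(c,0))=\CC[x,y]/\langle x-y\rangle\cong\CC[x]$) and $f_{2,1}=g_2'=\pi_{2,1}\bigl((L_{-1}^2-t L_{-2})\one_{c,h_{2,1}}\bigr)$ --- together with the fact that, by the Feigin--Fuchs criterion \eqref{eq:virpara}, the zero locus of each quadratic factor must be the set of pairs $(x,y)=(h'',h')$ for which $h'$ and $h''$ are linked by an $h_{r,1}$-type singular-vector relation; this fixes the remaining constants and produces the stated $g_r'$.

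The step I expect to be the main obstacle is precisely this explicit passage of the Benoit--Saint-Aubin vector through the Zhu projection in part~(2), and the extraction of the factorisation: it is a genuine computation with heavy bookkeeping, and matching the quadratic factors to the exact normalisation of $g_r'$ --- with the correct assignment of the variables $x$ and $y$ --- requires care. By contrast, part~(1), the reduction to the singular vector, and the base cases of the recursion are essentially formal, given the Feigin--Fuchs embedding picture of \cref{exhaustiveembedding} and the cited structure of $A(V(c,h))$.
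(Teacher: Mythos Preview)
The paper does not give its own proof of this proposition: it is quoted from \cite{FZ2} (specifically, their Lemmas~2.9 and 2.14) with only the citation as justification. So there is no in-paper argument against which to compare your proposal directly.

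That said, your outline is consistent with the approach the paper implicitly endorses. Immediately before \cref{thm:1sfusion} the authors remark that the fusion rules involving $L_{1,s}$ were computed in \cite{FZ2} ``using the explicit singular vector formula of Benoit and Saint-Aubin \cite{BSA}'', and that is precisely the engine you propose for part~(2). Your argument for part~(1) --- that for generic $c$ the maximal proper submodule $J_{r,s}\subset V_{r,s}$ is Verma (by the embedding picture of \cref{exhaustiveembedding}), hence $A(J_{r,s})$ is cyclic over the bimodule $A(V_{r,s})\cong\CC[x,y]$, so its image is the principal ideal generated by $f_{r,s}=\pi_{r,s}(v_{r,s})$ --- is the standard reduction and is correct as stated. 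Your honest flagging of the heavy combinatorics in pushing the Benoit--Saint-Aubin vector through Zhu's relations is accurate: that is where the actual work in \cite{FZ2} lies, and your sketch (recursion $f_{r,1}=g_r'\,f_{r-2,1}$ plus identification of the quadratic factors via the known zero locus) is a reasonable plan for reproducing it, though you have not carried it out.
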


Specialising \cite[Lem.~2.22]{FZ2} to the case we are interested in, we arrive at the following characterisation of the fusion rules.
\begin{prop}[\cite{FZ2}]\label{FZ}
$\cN_{L_{r,1} L_{1,s}}^{L_{r',s'}} = 0$ unless
\begin{subequations} \label{e}
\begin{align}
\label{e2}f_{r,1}(h_{r',s'}, h_{1,s}) &= 0,\\
\label{e1}f_{1,s}(h_{r',s'}, h_{r,1}) &= 0\\
\label{e3}\text{and} \quad f_{r',s'}(h_{1,s}, h_{r,1}) &= 0,
\end{align}
\end{subequations}
in which case, $\cN_{L_{r,1} L_{1,s}}^{L_{r',s'}} = 1$.
\end{prop}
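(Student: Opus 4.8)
The plan is to apply the general characterisation of fusion rules via Zhu algebra bimodules from \cite{FZ2}, specialised to the modules $L_{r,1}$ and $L_{1,s}$. First I would recall that, since $c$ is generic and $\ocfin$ is semisimple (\cref{thm:semisimple}) and a braided tensor category (\cref{thm:tensor}), the fusion coefficient $\cN_{L_{r,1}L_{1,s}}^{L_{r',s'}}$ equals the multiplicity of $L_{r',s'}$ in the fusion product $L_{r,1}\btimes L_{1,s}$, and also the dimension of the space of intertwining operators of the corresponding type. The key input from \cite{FZ2} (the specialisation of \cite[Lem.~2.22]{FZ2}) is that this dimension is computed by a Zhu-algebra calculation: one forms the $A(M(c,0))$-bimodule $A(L_{r,1})\otimes_{A(M(c,0))} A(L_{1,s})$ and pairs it against the bottom level of $L_{r',s'}$, where all the relevant bimodules are the explicit quotients of $\CC[x,y]$ described in \cref{prop:FZ2}. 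Because each of $A(L_{r,1})$, $A(L_{1,s})$ and the top of $L_{r',s'}$ is one-dimensional over the relevant localisation (the generating singular vectors being the only relations), the fusion coefficient is forced to be either $0$ or $1$, and it is nonzero precisely when the three defining polynomials $f_{r,1}$, $f_{1,s}$ and $f_{r',s'}$ simultaneously vanish at the indicated points.

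Concretely, the argument has three steps. First I would write down, following \cite[Lem.~2.22]{FZ2}, the statement that $\cN_{L_{r,1}L_{1,s}}^{L_{r',s'}}$ is the dimension of the space of linear functionals on $A(L_{r,1})\otimes_{A(M(c,0))}A(L_{1,s})$ that satisfy the compatibility conditions with the left action of $x=[\omega]$ on $L_{r',s'}$ specialised to the eigenvalue $h_{r',s'}$ and the right action specialised appropriately. Second, I would substitute the explicit bimodule presentations of \cref{prop:FZ2}\,(1): $A(L_{r,1})\cong \CC[x,y]/\langle f_{r,1}\rangle$ acted on the left via $x$ and on the right via $y$, and similarly for $A(L_{1,s})$. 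The tensor product over $A(M(c,0))=\CC[x]$ identifies the right variable of the first factor with the left variable of the second, so that evaluating the left action on the first factor at $h_{r',s'}$, the middle variable at $h_{1,s}$, and the right action on the second factor at $h_{r,1}$ (dictated by which module sits where in the intertwining operator) yields the three scalar equations \eqref{e2}, \eqref{e1} and \eqref{e3}. Third, I would observe that when these three equations hold the pairing space is exactly one-dimensional — this is where the structure of the singular-vector polynomials is used, together with the genericity of $t$ to rule out degenerate coincidences of weights — giving $\cN=1$, and otherwise it is $0$.

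The main obstacle is the last step: verifying that the simultaneous vanishing of the three polynomials is not merely necessary but also sufficient, i.e.\ that one genuinely gets a one-dimensional solution space rather than zero. In \cite{FZ2} this is handled by a careful analysis of the factorisations $f_{r,1}=g_r'g_{r-2}'\cdots g_{\overline r}'$ and $f_{1,s}=g_s g_{s-2}\cdots g_{\overline s}$ and of how the quadratic factors $g_r'$ and $g_s$ interact; one must check that the ideal generated by $f_{r,1}$ and $f_{1,s}$ in $\CC[x,y]$ (after the appropriate substitutions) is radical at the relevant point and cuts out a single reduced point, which uses that $t\notin\QQ$ to guarantee the discriminants and the various $h_{r,s}(t)$ are in sufficiently general position. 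Since all of this is already contained in \cite[Lem.~2.22]{FZ2}, the cleanest route is to quote that lemma directly after checking its hypotheses hold in our setting; I would therefore present the proof as an invocation of \cite[Lem.~2.22]{FZ2} together with \cref{prop:FZ2}, spelling out only the substitutions that produce \eqref{e2}--\eqref{e3} and noting that the dimension count there yields $0$ or $1$.
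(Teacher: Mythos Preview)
Your proposal is correct and follows the same approach as the paper: this proposition is not proved in the paper at all but is simply stated as the specialisation of \cite[Lem.~2.22]{FZ2} to the case at hand, with the bimodule presentations of \cref{prop:FZ2} supplying the polynomials $f_{r,1}$, $f_{1,s}$, $f_{r',s'}$. Your write-up is in fact more detailed than what the paper provides, but the substance is identical --- invoke \cite[Lem.~2.22]{FZ2} and read off the three vanishing conditions.
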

\noindent Fix now $r,s \in \ZZ_{\ge1}$ and consider first the $h_{r',s'}$ that solve \eqref{e2}.  Direct calculation shows that the solutions of $g_r'(x,h_{1,s}) = 0$ are $x=h_{r,s}$ and, if $r>1$, $x=h_{-r+2,s}$.  An easy calculation verifies that
\begin{equation} \label{eq:confwtseq}
	h_{r,s} = h_{r',s'} \iff (r,s) = (r',s')\ \text{or}\ (-r',-s'),
\end{equation}
since $t \notin \QQ$.  It follows that $h_{-r+2,s} \notin H_c$, for $r>1$, and so these solutions of \eqref{e2} correspond to simple Verma modules $V_{-r+2,s} = L_{-r+2,s} \notin \ocfin$ and hence cannot appear in the fusion of $L(r,1)$ and $L(1,s)$, by \cref{thm:tensor}.  The viable solutions are therefore $h_{r',s'} = h_{r,s}, h_{r-2,s}, \dots, h_{\overline{r},s}$.

However, the same analysis gives the viable solutions of \eqref{e1} as $h_{r',s'} = h_{r,s}, h_{r,s-2}, \dots, h_{r,\overline{s}}$.  Appealing to \eqref{eq:confwtseq}, we conclude that there is a unique solution to \cref{e2,e1}:  $h_{r',s'} = h_{r,s}$.
We can now state our main fusion rule.
\begin{theorem}\label{fusionrule}
Let $c$ be generic. Then, for $r,s \in \ZZ_{\ge1}$, we have
\begin{equation} \label{fr:Lrs}
L_{r,1}\btimes L_{1,s} = L_{r, s}.
\end{equation}
\end{theorem}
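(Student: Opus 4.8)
The plan is to combine the semisimplicity of $\ocfin$ with the constraints already extracted from the Frenkel--Zhu computation. First, by \cref{cor:closedness} the fusion product $L_{r,1} \btimes L_{1,s}$ is an object of $\ocfin$, which is semisimple for generic $c$ by \cref{thm:semisimple}; hence it is a finite direct sum of simple modules $L_{r',s'}$, and the multiplicity of $L_{r',s'}$ equals $\dim \Hom_{M(c,0)}(L_{r,1} \btimes L_{1,s}, L_{r',s'})$. By \cref{isoM} this dimension is the fusion coefficient $\cN_{L_{r,1} L_{1,s}}^{L_{r',s'}}$; here one uses that a simple $\vir$-module, being of countable dimension over $\CC$, has endomorphism algebra $\CC$, so non-isomorphic simple constituents do not interfere and the multiplicities are read off correctly.

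Next I would invoke \cref{FZ}, which gives $\cN_{L_{r,1} L_{1,s}}^{L_{r',s'}} \in \{0,1\}$, together with the analysis carried out immediately before the statement: conditions \eqref{e2} and \eqref{e1} already force $h_{r',s'} = h_{r,s}$, hence $(r',s') = (r,s)$, since $t \notin \QQ$ implies (via \eqref{eq:confwtseq}, the alternative $(r,s) = (-r',-s')$ being excluded by $r',s' \ge 1$) that $h_{\bullet,\bullet}$ is injective on positive index pairs. Therefore $L_{r,1} \btimes L_{1,s} \cong L_{r,s}^{\oplus m}$ with $m \in \{0,1\}$, and it remains only to exclude $m = 0$.

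To show the fusion product is non-zero I would bootstrap from \cref{thm:1sfusion} via the associativity isomorphism (\cref{thm:associativity}). If $L_{r,1} \btimes L_{1,s} = 0$, then also $(L_{r,1} \btimes L_{1,s}) \btimes L_{1,s} = 0$; but associativity identifies this module with $L_{r,1} \btimes (L_{1,s} \btimes L_{1,s})$, and by \eqref{fr:L1s} the sum defining $L_{1,s} \btimes L_{1,s}$ contains the term $L_{1,1} = L(c,0) = M(c,0)$, the unit object, using that $L(c,0) = M(c,0)$ for generic $c$ by \cref{cor:embeddings}\ref{it:M=L}. Since $\btimes$ commutes with finite direct sums, $L_{r,1} \btimes (L_{1,s} \btimes L_{1,s})$ then has $L_{r,1} \btimes M(c,0) \cong L_{r,1} \neq 0$ as a direct summand, a contradiction. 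Hence $m = 1$, which is \eqref{fr:Lrs}. One could instead verify condition \eqref{e3}, i.e.\ $f_{r,s}(h_{1,s}, h_{r,1}) = 0$, and apply \cref{FZ} directly; the argument above is preferable in that it needs no explicit formula for the singular-vector image $f_{r,s}$. The substantive work --- the singular-vector and Zhu-bimodule analysis behind \cref{FZ} and the solution of \eqref{e2}, \eqref{e1} --- is already in place, so within the present argument the only delicate point is the non-vanishing of the fusion product, which the associativity bootstrap settles cleanly; I anticipate no further obstacle.
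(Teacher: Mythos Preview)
Your proof is correct and follows essentially the same approach as the paper: reduce to $L_{r,1}\btimes L_{1,s}\in\{0,L_{r,s}\}$ via semisimplicity and the Frenkel--Zhu constraints \eqref{e2}--\eqref{e1}, then rule out $0$ by the associativity bootstrap $(L_{r,1}\btimes L_{1,s})\btimes L_{1,s}\cong L_{r,1}\btimes(L_{1,s}\btimes L_{1,s})\supseteq L_{r,1}$. The paper likewise avoids verifying \eqref{e3} directly, noting that this computation is difficult.
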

\begin{proof}
Since $\ocfin$ is tensor (\cref{thm:tensor}) and semisimple (\cref{thm:semisimple}), $L_{r,1}\btimes L_{1,s}$ decomposes as a finite direct sum of simples.  The previous arguments establish that the only possibilities for this decomposition are $0$ or $L_{r,s}$, with the deciding condition whether \eqref{e3} is satisfied (for $r'=r$ and $s'=s$).  Unfortunately, computing the left-hand side of \eqref{e3} directly is difficult.

Instead, note that combining $L_{r,1}\btimes L_{1,s} = 0$ with \cref{thm:1sfusion} (and associativity) gives
\begin{align}
	0 &\cong (L_{r,1} \btimes L_{1,s}) \btimes L_{1,s} \cong L_{r,1} \btimes (L_{1,s} \btimes L_{1,s}) \cong L_{r,1} \btimes (L_{1,1} \oplus \dots \oplus L_{1,2s-1}) \notag \\
	&\cong L_{r,1} \oplus \cdots,
\end{align}
a contradiction.  It follows that $L_{r,1}\btimes L_{1,s} \neq 0$ and the proof is complete.
\end{proof}
\noindent We remark that this \lcnamecref{fusionrule} proves, for generic $c$, a well-known conjecture of physicists.  Of course, physicists are more interested in the non-generic version of this conjecture \cite[Eq.~(4.34)]{MRR}.

\subsection{Coset realizations of the Virasoro algebra}
The next ingredient for our proof of the rigidity of $\ocfin$ is the well-known coset realization of the Virasoro vertex operator algebra \cite{GKO, ACL}.  We review this here for general simply laced Lie algebras before specializing to $\sl_2$.

Let $P_+$ be the set of dominant integral weights of a simple complex Lie algebra $\frg$ and let $Q$ be its root lattice. Let $\widehat{\frg} = \frg[t,t^{-1}] \oplus \CC K$ be the affinization of $\frg$.  For $\ell \in \CC$ and $\l \in P_+$, let $V_{\ell}(\l) = U(\widehat{\frg})\otimes_{U(\frg[t]\oplus \CC{K})}E^{\l}$, where $E^{\l}$ is the finite-dimensional $\frg$-module of highest weight $ \lambda$ regarded as a $\frg[t]\oplus \CC{K}$-module on which $t\frg[t]$ acts trivially and $K$ acts as multiplication by the level $\ell$. Denote by $L_{\ell}(\l)$ the simple quotient of $V_{\ell}(\l)$. The modules $V_{\ell}(0)$ and $L_{\ell}(0)$ admit, for $\ell \neq -h^{\vee}$, a vertex operator algebra structure \cite{FZ1}. To emphasize the dependence of these vertex operator algebras on $\frg$, we denote them by $V_{\ell}(\frg)$ and $L_{\ell}(\frg)$.

It is known that the category $KL_k$ of ordinary $L_k(\frg)$-modules has a rigid braided tensor category structure for all $k$ such that $k + h^{\vee} \notin \QQ_+$, $k \in \NN$, or $k$ is admissible \cite{KL1}--\cite{KL5}, \cite{HL5,CHY}.  When $k \in \NN$, the simple objects of $KL_k$ are rather the $L_k(\l)$ with $\l \in P^k_+ = \left\{\l\in P_+ \mid (\l, \th)\le k\right\}$, where $\th$ is the longest root of $\frg$.  Here, we study the generic case $k \notin \QQ$, for which $V_k(\frg) = L_k(\frg)$ and the simple objects of $KL_k$ are the $L_k(\l)$ with $\l \in P_+$.

Let $W^k(\frg)$ be the $W$-algebra associated with $\frg$ and a 
principal nilpotent element of $\frg$ at level $k$. Let $W_k(\frg)$ be the unique simple quotient of $W^k(\frg)$. We denote by $\chi_{\l}$ the central character associated to the weight $\l \in P_+$ and let $\bM_k(\chi_{\l})$ be the Verma module of $W^k(\frg)$ with highest weight $\chi_{\l}$ and $\bL_k(\chi_{\l})$ its unique simple (graded) quotient.

By specializing to $\frg = \sl_2$ and identifying $P_+$ for $\sl_2$ with $\ZZ_{\ge 0}$, we have $W^k(\sl_2) = M(c,0)$ and $W_{k}(\sl_2) = L(c, 0)$, for $c = 13 - 6t- 6t^{-1}$ with $t=k+2$. For $\mu, \nu \in P_+$, we moreover have
\begin{equation}
\bL_{k}(\chi_{\mu - 2(k +h^{\vee})\nu}) = L_{\mu+1,\nu+1} \in \ocfin.
\end{equation}

\begin{theorem}[\cite{ACL}]\label{cosetvoa}
Suppose that $\frg$ is a simply-laced Lie algebra and $\ell \notin \QQ$. Define $k \notin \QQ$ by
\begin{equation}\label{defl}
k + h^{\vee} = \frac{\ell+h^{\vee}}{\ell+h^{\vee}+1}.
\end{equation}
Then, for $\l \in P^1_+$ and $\mu \in P_+$, we have
\begin{subequations} \label{gencosetdecomps}
\begin{equation}
L_1(\l) \otimes V_{\ell}(\mu) \cong \bigoplus_{\substack{\nu \in P_+, \\ \l+\mu-\nu \in Q}} V_{\ell+1}(\nu) \otimes \bL_{k}(\chi_{\mu - 2(k +h^{\vee})\nu})
\end{equation}
as $V_{\ell+1}(\frg)\otimes W^{k}(\frg)$-modules. In particular,
\begin{equation}
L_1(\frg) \otimes V_{\ell}(\frg) \cong \bigoplus_{\nu \in P_+\cap Q} V_{\ell+1}(\nu) \otimes \bL_{k}(\chi_{- 2(k +h^{\vee})\nu}).
\end{equation}
\end{subequations}
\end{theorem}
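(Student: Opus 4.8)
The plan is to realise $W^k(\frg)$ as the \emph{commutant} (coset) $C$ of the diagonally embedded affine vertex algebra $V_{\ell+1}(\frg)\hookrightarrow L_1(\frg)\otimes V_\ell(\frg)$, and then to read off the decomposition of $L_1(\lambda)\otimes V_\ell(\mu)$ as a module over the mutually commuting pair $V_{\ell+1}(\frg)\otimes C$; throughout, $\ell$ is generic (hence so is $k$), which is precisely the regime in which $KL_\ell$ and $KL_{\ell+1}$ are semisimple and all affine characters are given by the Weyl--Kac formula. The diagonal Sugawara vector $\omega_{L_1}+\omega_{V_\ell}-\omega_{V_{\ell+1}}$ is a conformal vector for $C$ whose central charge, by \eqref{defl}, equals that of $W^k(\frg)$ (for $\frg=\sl_2$ it is exactly $13-6t-6t^{-1}$ with $t=k+2=(\ell+2)/(\ell+3)$); one then checks that $C$ is strongly generated by fields of the same conformal weights as the principal generators of $W^k(\frg)$ with matching leading operator products, producing a surjection $W^k(\frg)\twoheadrightarrow C$ since $W^k(\frg)$ is freely generated. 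The hard part --- and the main obstacle overall --- is the opposite bound: that $C$ is no larger than $W^k(\frg)$. I would establish this by passing to associated graded algebras for Li's canonical filtration, where $\gr C$ becomes a subalgebra of the ring of functions on an arc space attached to the associated variety of $L_1(\frg)\otimes V_\ell(\frg)$ cut out by the classical $V_{\ell+1}(\frg)$-moment map; a classical-invariant-theory computation on that arc space bounds $\gr C$ above by $\gr W^k(\frg)$, and combined with the surjection this forces $C\cong W^k(\frg)$, a one-parameter deformation in $\ell$ making the invariant-theoretic step uniform over generic $\ell$. For $\frg=\sl_2$ one may instead invoke the Goddard--Kent--Olive theorem that the Virasoro vertex algebra is the \emph{full} $\widehat{\sl}_2$-coset inside $L_1\otimes V_\ell$, or simply compare characters.

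Next I would decompose the module. Since $L_1(\lambda)\in KL_1$ and $V_\ell(\mu)\in KL_\ell$, the tensor product, regarded as a module over the diagonal $V_{\ell+1}(\frg)$, is an object of the semisimple category $KL_{\ell+1}$ and hence decomposes as $\bigoplus_{\nu\in P_+}V_{\ell+1}(\nu)\otimes M^\nu_{\lambda,\mu}$ with $M^\nu_{\lambda,\mu}=\Hom_{V_{\ell+1}(\frg)}\!\big(V_{\ell+1}(\nu),\,L_1(\lambda)\otimes V_\ell(\mu)\big)$. As $C$ commutes with $V_{\ell+1}(\frg)$, it acts on each $M^\nu_{\lambda,\mu}$, making it a module over $C\cong W^k(\frg)$. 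The constraint $\lambda+\mu-\nu\in Q$ is forced by the $P/Q$-grading: the $\frh$-weights of $L_1(\lambda)\otimes V_\ell(\mu)$ all lie in $\lambda+\mu+Q$, so $V_{\ell+1}(\nu)$ can occur only if $\nu\in\lambda+\mu+Q$. Locating the lowest-conformal-weight vector of the $\nu$-isotypic subspace and computing the action on it of $L_0$ and, when $\mathrm{rank}\,\frg>1$, of the higher Cartan fields of $W^k(\frg)$, identifies the highest weight of $M^\nu_{\lambda,\mu}$ as the central character $\chi_{\mu-2(k+h^{\vee})\nu}$; moreover this lowest space is one-dimensional, so $M^\nu_{\lambda,\mu}$ is a highest-weight $W^k(\frg)$-module, hence a quotient of $\bM_k(\chi_{\mu-2(k+h^{\vee})\nu})$.

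Finally I would upgrade ``quotient of $\bM_k$'' to ``$\cong\bL_k$'' by a character comparison. On one side $\mathrm{ch}\,M^\nu_{\lambda,\mu}$ is the branching function of $L_1(\lambda)\otimes V_\ell(\mu)$ over $V_{\ell+1}(\nu)$, computable from the Weyl--Kac character formula at generic level (this also shows that every $\nu$ with $\lambda+\mu-\nu\in Q$ genuinely occurs). On the other side, the Euler--Poincar\'e principle for the ``minus'' Drinfeld--Sokolov reduction writes $\mathrm{ch}\,\bL_k(\chi_{\mu-2(k+h^{\vee})\nu})$ as a finite alternating sum of $\bM_k$-characters over the appropriate integral Weyl orbit --- for $\frg=\sl_2$ a difference of two $\bM_k$-characters, matching $\mathrm{ch}\,L_{\mu+1,\nu+1}$ coming from the single Verma embedding $V_{r,s}\leftarrow V_{-r,s}$ valid for $t\notin\QQ$. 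Equating the two gives $\mathrm{ch}\,M^\nu_{\lambda,\mu}=\mathrm{ch}\,\bL_k(\chi_{\mu-2(k+h^{\vee})\nu})$; since $M^\nu_{\lambda,\mu}$ is a highest-weight module surjecting onto $\bL_k(\chi_{\mu-2(k+h^{\vee})\nu})$ with the same character, the surjection must be an isomorphism. This yields the decomposition for all $\lambda\in P^1_+$, $\mu\in P_+$, and setting $\lambda=\mu=0$ gives the stated vacuum decomposition.
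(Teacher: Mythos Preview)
The paper does not prove this theorem at all: it is quoted verbatim from \cite{ACL} and used as a black box in the rigidity argument of Section~5. So there is no ``paper's own proof'' to compare against; what you have written is, in outline, the strategy of Arakawa--Creutzig--Linshaw themselves. Your sketch of the commutant identification $C\cong W^k(\frg)$ via a surjection from the universal $W$-algebra plus an upper bound on $\gr C$ through arc-space invariant theory, followed by the $KL_{\ell+1}$-semisimplicity decomposition and a branching-versus-reduction character match, is an accurate high-level summary of that paper, and for $\frg=\sl_2$ your remark that one can shortcut via GKO or a direct character comparison is also correct. There is nothing further to compare.
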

\noindent By specializing \cref{cosetvoa} to $\sl_2$, the decompositions \eqref{gencosetdecomps} become 
\begin{subequations} \label{coset}
\begin{equation}\label{coset1}
L_1(\l) \otimes V_{\ell}(\mu) \cong \bigoplus_{\substack{\nu=0 \\ \l+\mu = \nu \bmod{2}}}^{\infty} V_{\ell+1}(\nu) \otimes L_{\mu+1,\nu+1},
\end{equation}
for $\l \in P^1_+ = \{0,1\}$ and $\mu \in P_+ = \NN$, and
\begin{equation}\label{coset2}
L_1(\sl_2) \otimes V_{\ell}(\sl_2) \cong \bigoplus_{\nu=0}^{\infty} V_{\ell+1}(2\nu) \otimes L_{1,2\nu+1}.
\end{equation}
\end{subequations}

\subsection{Categories of vertex operator algebra extensions}

The last ingredient we need is the theory of tensor categories of vertex operator algebra extensions \cite{CKM}.  Let $\mathcal{C}$ be a braided tensor category with tensor unit $\one$, braiding $c$ and associativity isomorphism $a$.

\begin{defn}
An \emph{associative algebra} in $\mathcal{C}$ is a triple $(A, \mu_A, \iota_A)$ with $A \in \mathcal{C}$ and $\mu_A\colon A \btimes A \rightarrow A$, $\iota_A\colon \one \rightarrow A$ morphisms in $\mathcal{C}$ satisfying the following axioms.
\begin{enumerate}
\item
Unit: $A = \one \btimes A \xrightarrow{\iota_A\btimes \id_A} A \btimes A \xrightarrow{\mu_A} A$ and $A = A \btimes \one \xrightarrow{\id_A\btimes \iota_A} A \btimes A \xrightarrow{\mu_A} A$ are both $\id_A$.
\item
Associativity: $\mu_A\circ (\id_A\btimes \mu_A) = \mu_A \circ (\mu_A \btimes \id_A)\circ a_{A,A,A}\colon A \btimes (A \btimes A) \rightarrow A$.
\end{enumerate}
An associative algebra $A \in \mathcal{C}$ is said to be \emph{commutative} if $\mu_A\circ c_{A, A} = \mu_A\colon A\btimes A \rightarrow A$.
\end{defn}

\begin{defn}
For an associative algebra $A$ in $\mathcal{C}$, define $\mathcal{C}_A$ to be the category of pairs $(X, \mu_X)$ for which $X \in \mathcal{C}$ and $\mu_X \in \Hom_{\mathcal{C}}(A\otimes X, X)$ satisfy the following.
\begin{enumerate}
\item
Unit: $X = \one \btimes X \xr{\iota_A \btimes \id_X} A\btimes X \xr{\mu_X} X$ is $\id_X$.
\item
Associativity: $\mu_X\circ (\id_A \btimes \mu_X) = \mu_X \circ (\mu_A \btimes \id_X)\circ a_{A,A,X}\colon A \otimes (A \btimes X) \rightarrow X$.
\end{enumerate}
Define $\mathcal{C}_A^0$ to be the full subcategory of $\cC_A$ consisting of \emph{local modules}: those objects $(X, \mu_X)$ such that $\mu_X\circ c_{X, A}\circ c_{A, X} = \mu_X$.
\end{defn}

When $A$ is commutative, the category $\cC_A$ is naturally a tensor category with tensor product $\btimes_A$ and unit object $A$, while the subcategory $\cC_A^0$ is braided tensor (see for example \cite{KO} --- the case in which $A$ is an object of the direct sum completion of $\cC$, that is a countably infinite direct sum of objects, is addressed in \cite{AR, CGR, CKM2}).

Let $V$ be a vertex operator algebra and $\cC$ be a category of $V$-modules with a natural tensor category structure. The following \lcnamecref{voaext} allows one to study vertex operator algebra extensions using abstract tensor category theory.

\begin{theorem}[\cite{HKL, CKM}]\label{voaext}
A vertex operator algebra extension $V \subset A$ in $\cC$ is equivalent to a commutative associative algebra in the braided tensor category $\cC$ with trivial twist and injective unit. Moreover, the category of modules in $\cC$ for the extended vertex operator algebra $A$ is braided tensor equivalent to the category of local $\cC$-algebra modules $\cC_A^0$ via the induction functor $\cF_A$.
\end{theorem}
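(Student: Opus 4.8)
\emph{Setting up the algebra.} The plan is to follow \cite{HKL, CKM}, using \cref{isoM} to translate the vertex operator $Y_A$ of the extended algebra (and $Y_M$ of an $A$-module) into the language of the braided tensor category $\cC$, and then to run the same dictionary in reverse. Given a vertex operator algebra extension $V \subset A$ with $A \in \cC$, I would first note that the restriction of $Y_A$ to $A \otimes A$ is, after the substitution $x \mapsto z$, a (non-logarithmic) intertwining operator of type $\binom{A}{A\,A}$ in $\cC$, and that $\one_A$ exhibits $V$ as the image of $\one$; by \cref{isoM} these data correspond to morphisms $\mu_A \colon A \btimes A \to A$ and $\iota_A \colon \one = V \to A$. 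I would then check that the unit axiom amounts to $Y_A(\one_A,x) = \id_A$ together with the creation property; that associativity of $\mu_A$ is the duality (Jacobi) property of $Y_A$ transported across the associativity isomorphism of $\cC$, where \cref{thm:associativity} supplies what is needed; and that commutativity $\mu_A \circ c_{A,A} = \mu_A$ follows from skew-symmetry $Y_A(a,x)b = e^{x L_{-1}} Y_A(b,-x)a$ once one unwinds the braiding $c$ as analytic continuation together with the balancing $\theta_W = e^{2\pi i L_0}$ --- it is precisely $\theta_A = \id_A$, i.e.\ that $A$ is $\ZZ$-graded, that forces the two sides to agree. Running this backwards, from a commutative associative algebra with trivial twist and injective unit I would define $Y_A$ on $A \otimes A$ through \cref{isoM}, extend it by the $V$-action, and recover the Jacobi identity and the remaining vertex operator algebra axioms, with injectivity of $\iota_A$ and triviality of $\theta_A$ guaranteeing an honest integer-graded extension of $V$.

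\emph{Modules and induction.} For an $A$-module $M$ lying in $\cC$, I would again use \cref{isoM} to convert $Y_M|_{A \otimes M}$ into $\mu_M \colon A \btimes M \to M$, verify the unit and associativity axioms so that $M \in \cC_A$, and observe that single-valuedness of $Y_M$ yields the locality relation $\mu_M \circ c_{M,A} \circ c_{A,M} = \mu_M$, hence $M \in \cC_A^0$. Conversely, from $(X,\mu_X) \in \cC_A^0$ I would reconstruct $Y_X$, the locality condition being exactly what removes the monodromy, so that $X$ becomes a genuine untwisted $A$-module in $\cC$. These assignments are mutually inverse up to natural isomorphism, and I would then identify the equivalence with induction: $\cF_A = A \btimes (-)$ is left adjoint to restriction, its unit and counit are isomorphisms on $\cC_A^0$, and $\cF_A$ carries $\btimes$ to the relative tensor product $\btimes_A$ (the coequalizer of the two $A$-actions, as in \cite{KO}) while preserving the braiding; thus $\cF_A$ is a braided tensor equivalence from the category of $A$-modules in $\cC$ onto $\cC_A^0$.

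\emph{The main obstacle} is the analytic bookkeeping: one must ensure that the formal identities built from $\mu_A$, $\iota_A$ and $\mu_X$ assemble into absolutely convergent compositions of intertwining operators, and that the associativity and braiding isomorphisms of $\cC$ really do match the duality and monodromy of those operators. For $\cC = \ocfin$ this is underwritten by the convergence and extension property and the associativity isomorphism already in hand through \cref{vice,thm:associativity}. The conceptual heart, carried out in full in \cite{HKL, CKM}, is the precise correspondence between ``commutative algebra with trivial twist'' and ``$\ZZ$-graded vertex operator algebra extension'', and between ``local module'' and ``untwisted module'', together with the verification that induction respects all of this structure.
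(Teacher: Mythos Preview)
The paper does not give its own proof of this theorem: it is stated with attribution to \cite{HKL, CKM} and used as a black box in \cref{sec:rigidproof}. There is therefore nothing in the paper to compare your argument against beyond the citation itself.

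Your sketch is a reasonable outline of the strategy in \cite{HKL, CKM}: translate $Y_A$ and $Y_M$ into morphisms $\mu_A$, $\mu_M$ via the correspondence of \cref{isoM}, match the vertex-algebra axioms (vacuum, Jacobi/duality, skew-symmetry, integrality of grading) with the algebra axioms (unit, associativity, commutativity, trivial twist), and identify locality with single-valuedness. One point deserves care: you write that the unit and counit of the adjunction are isomorphisms on $\cC_A^0$, which would make $\cF_A \colon \cC \to \cC_A^0$ itself an equivalence. That is not what is being claimed. The induction functor $\cF_A = A \btimes (-)$ goes from $\cC$ to $\cC_A$ and is not an equivalence in general; the equivalence in the theorem is between the category of \emph{$A$-modules in the vertex-operator-algebra sense} lying in $\cC$ and the category $\cC_A^0$, and it is realised by the dictionary $Y_M \leftrightarrow \mu_M$ you describe, not by induction from $\cC$. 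Induction enters only in that it is the categorical left adjoint to restriction and agrees, under this dictionary, with the vertex-algebraic induction of $V$-modules to $A$-modules. Apart from this conflation, your outline captures the content of the cited proofs.
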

\noindent We recall that the \emph{twist} $\theta_X$ of an object $X$ in a category of modules over a vertex operator algebra is given by the action of $e^{2\pi i L_0}$.  To say that $A$ has trivial twist above therefore means that $L_0$ acts semisimply on $A$ with integer conformal weights.

\subsection{Rigidity} \label{sec:rigidproof}
It is time to put all these ingredients together to prove the rigidity of $\ocfin$ for all generic central charges.  We first use \cref{voaext} to prove a braided tensor equivalence between the full subcategory $\ocleft \subset \ocfin$, generated by the simple Virasoro modules $L_{\mu,1}$ with $\mu \in \ZZ_{\ge1}$, and a simple current twist of the category $KL_{\ell}$ of ordinary $L_{\ell}(\sl_2)$-modules.  Here, $\ell$ is related to $k$ by \eqref{defl}, hence to $t=k+2$ and the (generic) central charge $c=13-6t-6t^{-1}$.

Let
\begin{equation}
A = \bigoplus_{\nu=0}^{\infty} V_{\ell+1}(2\nu) \otimes L_{1,2\nu+1} \cong L_1(\sl_2) \otimes V_{\ell}(\sl_2).
\end{equation}
Then, $A$ is a commutative associative algebra object in the direct sum completion of $KL_{\ell+1}\btimes \ocfin$.  Denote by $\cF_A \colon KL_{\ell+1}\btimes \ocfin \rightarrow \left(KL_{\ell+1}\btimes \ocfin\right)_A$ the induction functor.
\begin{lemma} \label{prop:indfaith}
	The restriction of the induction functor $\cF_A$ to the full subcategory $\ocleft$ is fully faithful.
\end{lemma}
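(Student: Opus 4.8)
The plan is to exploit the general criterion for full faithfulness of the induction functor $\cF_A$ in terms of the algebra object $A$: for a commutative associative algebra $A$ in a braided tensor category $\cC$ (or its direct-sum completion), the restriction of $\cF_A$ to a full subcategory is fully faithful once one knows that $A$ ``acts freely'' on the objects there, i.e.\ that the canonical map $\Hom_\cC(X,Y) \to \Hom_{\cC_A}(\cF_A X, \cF_A Y)$ is bijective. By standard adjunction (induction is left adjoint to restriction), $\Hom_{\cC_A}(\cF_A X, \cF_A Y) \cong \Hom_\cC(X, \cF_A Y) = \Hom_\cC(X, A\btimes Y)$, so full faithfulness of $\cF_A$ on $\ocleft$ is equivalent to showing that for all simple $X = L_1(\sl_2)\otimes L_{\mu,1}$ and $Y = L_1(\sl_2)\otimes L_{\mu',1}$ (more precisely, objects of $KL_{\ell+1}\btimes\ocleft$ of the relevant form), the inclusion $Y \hookrightarrow A\btimes Y$ coming from $\iota_A$ induces an isomorphism on $\Hom_\cC(X,-)$. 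Since $\ocleft$ is semisimple (by \cref{thm:semisimple}) it suffices to check this on simple objects, and by the $\Hom$--$\Ext$ dictionary it suffices to show that $\Hom_\cC\bigl(X,\, (A/\one)\btimes Y\bigr) = 0$ for the relevant simples $X,Y$, where $\one = V_{\ell+1}(0)\otimes L_{1,1}$ is the unit summand of $A$.

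Concretely, I would argue as follows. First, decompose $A\btimes Y$ using $A = \bigoplus_{\nu\ge0} V_{\ell+1}(2\nu)\otimes L_{1,2\nu+1}$ together with the known fusion rules: on the $KL_{\ell+1}$ factor, $L_{\ell+1}(2\nu)\btimes L_{\ell+1}(\mu')$ is governed by $\sl_2$ fusion at generic level $\ell+1\notin\QQ$, hence is $L_{\ell+1}(\mu'+2\nu)\oplus\cdots\oplus L_{\ell+1}(|2\nu-\mu'|)$; on the Virasoro factor, $L_{1,2\nu+1}\btimes L_{\mu',1} = L_{\mu',2\nu+1}$ by \cref{fusionrule} (this is exactly why the computation of the fusion rules precedes the rigidity argument). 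Thus $A\btimes Y$ is a direct sum of simples $L_{\ell+1}(\lambda)\otimes L_{\mu',2\nu+1}$ with $\lambda$ ranging over the appropriate Clebsch--Gordan set. Now a $\Hom$ from the simple object $X = L_{\ell+1}(\mu)\otimes L_{\mu,1}$ into this sum is nonzero only for the summand with $\nu=0$ and $\lambda=\mu$, namely the unit contribution $\one\btimes Y = Y$ itself (using that $L_{\mu',2\nu+1}\cong L_{\mu,1}$ forces $\nu=0$ and $\mu'=\mu$, since the extended Kac labels are determined by the conformal weight when $t\notin\QQ$, cf.\ \eqref{eq:confwtseq}). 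Hence $\Hom_\cC(X, A\btimes Y) \cong \Hom_\cC(X, Y)$, with the isomorphism realized by $\iota_A$, which is precisely full faithfulness.

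The main obstacle — and the step that needs care rather than just bookkeeping — is establishing that the only contribution to $\Hom_\cC(X, A\btimes Y)$ comes from the trivial summand of $A$, i.e.\ controlling which simples appear in $(A/\one)\btimes Y$ and verifying none of them is isomorphic to $X$. This requires both factors of the fusion to be pinned down exactly: the $\sl_2$ side needs $\ell+1\notin\QQ$ so that $KL_{\ell+1}$ is semisimple with generic ($sl_2$-type) fusion rules, and the Virasoro side needs \cref{fusionrule}. The injectivity statement then reduces to the rigidity of the genericity condition \eqref{eq:confwtseq}: distinct pairs $(\mu',2\nu+1)$ give non-isomorphic $L_{\mu',2\nu+1}$, so no ``accidental'' $\Hom$ can occur. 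I would also note that one must check $A$ genuinely lies in (the direct-sum completion of) $KL_{\ell+1}\btimes\ocfin$ and is a commutative algebra with trivial twist and injective unit so that \cref{voaext} applies and $\cF_A$ is defined — but this is part of the setup already recorded before the lemma, coming from the coset decomposition \eqref{coset2} and \cite{ACL}. Once these inputs are in place, the adjunction computation above yields the claim.
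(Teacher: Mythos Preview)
Your strategy is essentially the same as the paper's: both use Frobenius reciprocity to reduce full faithfulness to a Hom computation in $KL_{\ell+1}\btimes\ocfin$, then invoke the fusion rule $L_{1,2\nu+1}\btimes L_{\mu',1}\cong L_{\mu',2\nu+1}$ (\cref{fusionrule}) together with the uniqueness of Kac labels \eqref{eq:confwtseq} to isolate the unit summand of $A$.

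However, your identification of the objects $X,Y$ is muddled. The embedding of $\ocleft$ into $KL_{\ell+1}\btimes\ocfin$ is $L_{\mu,1}\mapsto V_{\ell+1}(\sl_2)\otimes L_{\mu,1}$, i.e.\ the first factor is the tensor unit $V_{\ell+1}(0)$. You first write $X=L_1(\sl_2)\otimes L_{\mu,1}$, which lives in the wrong category ($L_1(\sl_2)$ has level $1$, not $\ell+1$), and later switch to $X=L_{\ell+1}(\mu)\otimes L_{\mu,1}$, which is not in the image of the embedding. With the correct $X=V_{\ell+1}(0)\otimes L_{\mu,1}$, the $KL_{\ell+1}$-side fusion in $A\btimes Y$ is trivial (no Clebsch--Gordan needed): one gets directly $A\btimes Y\cong\bigoplus_{\nu\ge0} V_{\ell+1}(2\nu)\otimes L_{\mu',2\nu+1}$, and the only summand admitting a nonzero map from $V_{\ell+1}(0)\otimes L_{\mu,1}$ is the $\nu=0$, $\mu'=\mu$ term. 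The paper does exactly this, and in addition recognises the resulting sum via the coset decomposition \eqref{coset1} as $L_1(\bar\mu)\otimes V_\ell(\mu)$, thereby identifying $\cF_A(X)$ explicitly as a simple object of $KL_1\btimes KL_\ell$; this sets up \cref{prop:leftequiv} directly.
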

\begin{proof}
	Using the Virasoro fusion rule \eqref{fr:Lrs} and the specializations \eqref{coset} of \cref{cosetvoa}, we obtain
	\begin{align}\label{eq:liftiso}
		\cF_A(V_{\ell+1}(\sl_2) \otimes L_{\mu+1,1})
		&= \biggl[ \bigoplus_{\nu=0}^{\infty} V_{\ell+1}(2\nu) \otimes L_{1,2\nu+1} \biggr] \btimes \biggl[ V_{\ell+1}(\sl_2) \otimes L_{\mu+1,1} \biggr] \notag \\
		&\cong \bigoplus_{\nu=0}^{\infty} V_{\ell+1}(2\nu) \otimes L_{\mu+1,2\nu+1}
		\cong L_1(\ov{\mu}) \otimes V_{\ell}(\mu),
	\end{align}
	where $\ov{\mu}$ is the residue of $\mu \in \ZZ_{\ge 0}$ modulo $2$.  \emph{A priori}, the last isomorphism is a $KL_{\ell+1}\btimes \ocfin$-morphism.  However, Frobenius reciprocity gives, for $\nu \in \ZZ_{\ge0}$ and $\rho \in \{0,1\}$,
	\begin{align}
		&\Hom_{\left(KL_{\ell+1}\btimes \ocfin\right)_A} \Bigl( \cF_A(V_{\ell+1}(\sl_2) \otimes L_{\mu+1,1}), L_1(\rho) \otimes V_{\ell}(\nu) \Bigr)\notag\\
		&\quad \cong \Hom_{KL_{\ell+1}\btimes \ocfin} \Big( V_{\ell+1}(\sl_2) \otimes L_{\mu+1,1},\ \bigoplus_{\mathclap{\substack{\l = 0 \\ \l = \nu+\rho \bmod{2}}}}^{\infty}\ V_{\ell+1}(\l) \otimes L_{\nu+1,\l+1} \Big)\notag\\
		&\quad \cong
		\begin{cases*}
			\CC & if $\nu = \mu$ and $\rho=\ov{\mu}$, \\
			0 & otherwise.
		\end{cases*}
	\end{align}
	In other words, the isomorphism \eqref{eq:liftiso} is actually an isomorphism in $KL_1 \btimes KL_{\ell}$.
\end{proof}

Restricting $\cF_A$ to $\ocleft$, its image is the full subcategory of $KL_1 \btimes KL_{\ell}$ whose simple objects have the form $L_1(\ov{\mu}) \otimes V_{\ell}(\mu)$. Denote this category by $\left(KL_1 \btimes KL_{\ell}\right)_0$. This category is a rigid semisimple tensor category with the same simple objects and fusion rules as the category of finite dimensional $\sl_2$-modules (\cite{KL1}-\cite{KL5}, \cite{L}).  This proves the following \lcnamecref{prop:leftequiv}.
\begin{prop} \label{prop:leftequiv}
	$\ocleft$ and $\left(KL_1 \btimes KL_{\ell}\right)_0$ are braided-equivalent tensor categories.  In particular, $\ocleft$ is rigid.
\end{prop}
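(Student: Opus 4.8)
The plan is to bundle the ingredients just assembled into an explicit braided tensor equivalence and then transport rigidity across it. Since $A = \bigoplus_{\nu\ge 0} V_{\ell+1}(2\nu)\otimes L_{1,2\nu+1}\cong L_1(\sl_2)\otimes V_\ell(\sl_2)$ is an honest vertex operator algebra --- hence $\ZZ$-graded by conformal weight with injective unit, so of trivial twist --- it is a commutative algebra object in (the direct-sum completion of) $KL_{\ell+1}\btimes\ocfin$ meeting the hypotheses of \cref{voaext}. Thus $\cF_A$ is a braided tensor functor, and, composed with the equivalence of \cref{voaext}, it identifies induced modules with modules for the extended vertex operator algebra $A\cong L_1(\sl_2)\otimes V_\ell(\sl_2)$. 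Pre-composing with the braided tensor functor $\ocfin\to KL_{\ell+1}\btimes\ocfin$, $W\mapsto V_{\ell+1}(\sl_2)\otimes W$, and restricting to $\ocleft$, I would obtain a braided tensor functor $F$ out of $\ocleft$.

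Next, I would identify the essential image of $F$. By the isomorphism \eqref{eq:liftiso} and the Frobenius-reciprocity refinement in the proof of \cref{prop:indfaith}, $F$ sends each simple $L_{\mu+1,1}$ to $L_1(\ov\mu)\otimes V_\ell(\mu)$, an \emph{ordinary} module, with the isomorphism already living in $KL_1\btimes KL_{\ell}$; so $F$ takes values in $KL_1\btimes KL_{\ell}$. Since $\ocfin$, and hence $\ocleft$, is semisimple by \cref{thm:semisimple}, with the $L_{\mu+1,1}$ ($\mu\in\ZZ_{\ge0}$) exactly its simple objects (they close under $\btimes$ by \cref{thm:1sfusion}), $F$ carries a general object of $\ocleft$ to a finite direct sum of the $L_1(\ov\mu)\otimes V_\ell(\mu)$, and every such sum is realised. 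Hence the essential image of $F$ is exactly $\left(KL_1 \btimes KL_{\ell}\right)_0$, which I would record is a braided tensor subcategory of $KL_1\btimes KL_{\ell}$: it is closed under $\btimes$ because $L_1(\rho)\btimes L_1(\rho')=L_1(\ov{\rho+\rho'})$ in $KL_1$ and the $\sl_2$ Clebsch--Gordan rule holds in $KL_{\ell}$ for $\ell\notin\QQ$, and closed under contragredients because $L_1(\ov\mu)$ and $V_\ell(\mu)$ are each self-dual.

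With full faithfulness from \cref{prop:indfaith} and essential surjectivity onto $\left(KL_1 \btimes KL_{\ell}\right)_0$ in hand, $F$ is an equivalence of categories, and since it is a braided tensor functor, a braided tensor equivalence --- this is the first assertion. For the rigidity claim, $KL_1$ is a finite semisimple modular tensor category and $KL_{\ell}$ is rigid for $\ell\notin\QQ$ (\cite{KL1}--\cite{KL5}, \cite{CHY}), so $KL_1\btimes KL_{\ell}$ is rigid; a full tensor subcategory closed under duals is again rigid, so $\left(KL_1 \btimes KL_{\ell}\right)_0$ is rigid; and rigidity is preserved by braided tensor equivalences, so $\ocleft$ is rigid.

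The main obstacle has, in effect, already been overcome in \cref{prop:indfaith} (the coset decompositions of \cref{cosetvoa} together with Frobenius reciprocity). In the present argument the only genuinely delicate point is the one settled there as well: checking that the induced objects $\cF_A(V_{\ell+1}(\sl_2)\otimes L_{\mu+1,1})$ lie in the category $KL_1\btimes KL_{\ell}$ of \emph{ordinary} $L_1(\sl_2)\otimes V_\ell(\sl_2)$-modules, so that the target of $F$ is the rigid semisimple category of $\sl_2$-type, rather than some larger category of $A$-modules. Everything else is routine bookkeeping with semisimplicity and the $\sl_2$ fusion rules.
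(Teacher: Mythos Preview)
Your proposal is correct and follows essentially the same approach as the paper: use the induction functor $\cF_A$ (precomposed with $W\mapsto V_{\ell+1}(\sl_2)\otimes W$), invoke full faithfulness from \cref{prop:indfaith}, identify the essential image as $\left(KL_1\btimes KL_\ell\right)_0$, and transport rigidity from the Kazhdan--Lusztig side. Your write-up is in fact more careful than the paper's terse paragraph---you explicitly verify that $A$ has trivial twist, that the target subcategory is closed under $\btimes$ and contragredients, and you flag the key point that the induced objects land in $KL_1\btimes KL_\ell$ (i.e.\ are local/ordinary $A$-modules), which is exactly what is needed for $\cF_A$ to be braided on this subcategory rather than merely monoidal.
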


We now come to the main \lcnamecref{thm:rigid} of this section.
\begin{theorem}\label{thm:rigid}
 The category $\ocfin$ is rigid for generic central charges.
\end{theorem}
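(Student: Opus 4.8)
The plan is to reduce the rigidity of all of $\ocfin$ to the rigidity of the two ``halves'' $\ocleft$ and $\ocright$ already in hand, using the factorisation of simples provided by the fusion rule \eqref{fr:Lrs} together with semisimplicity. Since $\ocfin$ is semisimple (\cref{thm:semisimple}) and consists of finite-length modules, every object is a finite direct sum of the simples $L_{r,s}$, $r,s\in\ZZ_{\ge1}$; as a finite direct sum of rigid objects is rigid, it suffices to show each $L_{r,s}$ is rigid in $\ocfin$. By \cref{fusionrule} we have $L_{r,s}\cong L_{r,1}\btimes L_{1,s}$, so it is enough to know that $L_{r,1}$ and $L_{1,s}$ are rigid objects of $\ocfin$ and that the fusion product of two rigid objects is rigid.

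First I would record that $\ocright$, the full subcategory of $\ocfin$ generated by the simples $L_{1,s}$ with $s\in\ZZ_{\ge1}$, is rigid. This follows from \cref{prop:leftequiv} by exploiting the symmetry $t\leftrightarrow t^{-1}$: this involution fixes the central charge $c=13-6t-6t^{-1}$, preserves genericity (as $t\notin\QQ\iff t^{-1}\notin\QQ$), and interchanges $h_{r,s}(t)\leftrightarrow h_{s,r}(t^{-1})$, hence $L_{r,s}\leftrightarrow L_{s,r}$ and $\ocleft\leftrightarrow\ocright$. Concretely, one reruns the construction preceding \cref{prop:leftequiv} with the two Kac labels exchanged: one uses the decompositions \eqref{coset1}--\eqref{coset2} with $\l,\mu$ ranging over the appropriate data for the ``other'' factor, the $L_{r_1,1}\btimes L_{r_2,1}$ analogue of \cref{thm:1sfusion}, and the identification $W^k(\sl_2)=M(c,0)$; the result is a braided tensor equivalence of $\ocright$ with a full rigid semisimple subcategory of $KL_1\btimes KL_{\ell'}$, and hence $\ocright$ is rigid.

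Now, since $\ocleft$ and $\ocright$ are \emph{full} tensor subcategories of $\ocfin$, the evaluation and coevaluation morphisms witnessing rigidity of $L_{r,1}\in\ocleft$ and $L_{1,s}\in\ocright$ are automatically morphisms of $\ocfin$, so both are rigid as objects of $\ocfin$. In any (braided) monoidal category the fusion product of rigid objects is rigid, with dual $(L_{1,s})^{\vee}\btimes(L_{r,1})^{\vee}$; consistently, this dual is again isomorphic to $L_{r,s}$, as the Virasoro simples are self-contragredient. Therefore each $L_{r,s}\cong L_{r,1}\btimes L_{1,s}$ is rigid in $\ocfin$, and by the semisimplicity reduction above, every object of $\ocfin$ is rigid.

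\textbf{Main obstacle.} There is no deep difficulty here beyond assembling the pieces; the step requiring the most care is making the $t\leftrightarrow t^{-1}$ symmetry argument rigorous enough to genuinely transport \cref{prop:leftequiv} to $\ocright$ --- that is, verifying that every ingredient feeding into \cref{prop:indfaith} and \cref{prop:leftequiv} (the coset decompositions of \cref{cosetvoa} specialised to $\sl_2$, the $L_{\ast,1}$-fusion rules of \cref{thm:1sfusion}, and the coset identification of $M(c,0)$ as a principal $W$-algebra) is symmetric under this Feigin--Fuchs-type duality. The remaining points --- that rigidity passes from a full tensor subcategory to the ambient category, that fusion products and finite direct sums of rigid objects are rigid --- are standard.
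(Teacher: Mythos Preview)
Your proposal is correct and matches the paper's proof essentially line for line: reduce to simples by semisimplicity, factor $L_{r,s}\cong L_{r,1}\btimes L_{1,s}$ via \cref{fusionrule}, invoke rigidity of $\ocleft$ from \cref{prop:leftequiv}, and use that tensor products of rigid objects are rigid. The paper handles $\ocright$ with the one-line ``an identical argument shows\dots''; your $t\leftrightarrow t^{-1}$ symmetry is exactly the right way to unpack that phrase, since $c(t)=c(t^{-1})$ and $h_{r,s}(t)=h_{s,r}(t^{-1})$ interchange the two Kac labels while leaving $M(c,0)$ fixed.
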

\begin{proof}
As $\ocleft$ is rigid, the simple objects $L_{\mu,1}$, with $\mu \in \ZZ_{\ge1}$ 
are rigid.  An identical argument shows that the $L_{1,\nu}$ with $\nu \in \ZZ_{\ge1}$ are likewise rigid.  It follows now from \cref{fusionrule} that $L_{\mu, \nu} \cong L_{\mu,1} \btimes L_{1,\nu}$ is rigid.  Since $\ocfin$ is semisimple (\cref{thm:semisimple}), this completes the proof.
\end{proof}

\begin{remark}
Braided tensor equivalences between categories of modules for $W$-algebras and affine vertex operator algebras may be viewed as reformulations of problems in the quantum geometric Langlands program.  For example, \cref{prop:leftequiv} is the case $\mathfrak{g}=\mathfrak{sl}_2$, $N=1$ and $\beta$ generic of \cite[Conj.~6.4]{AFO}, up to a simple current twist (see also \cite[Rem.~7.2]{C1}).
\end{remark}

We finish by demonstrating that $\ocfin$ is moreover non-degenerate.  Recall that in a rigid braided tensor category $\cC$, an object $T \in \cC$ is called \emph{transparent} if it has trivial monodromy with every other object of $\cC$, that is if $c_{X, T}\circ c_{T, X} = \id_{T \btimes X}$ for all $X \in \cC$.  $\cC$ is said to be \emph{non-degenerate} if the only transparent objects are finite direct sums of the tensor unit. In our case, non-degeneracy follows easily from the balancing axiom
\begin{equation} \label{eq:balance}
c_{X, T}\circ c_{T, X} = \left(\theta_X^{-1} \boxtimes \theta_T^{-1}\right) \circ \theta_{X \boxtimes T},
\end{equation}
where we recall that $\theta_X$ denotes the twist of $X \in \cC$.

\begin{prop}\label{prop:nondegenerate}
The category $\ocfin$ is non-degenerate for generic central charges.
\end{prop}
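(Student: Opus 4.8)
The plan is to combine semisimplicity with the balancing axiom \eqref{eq:balance}. Since $\ocfin$ is semisimple by \cref{thm:semisimple}, every transparent object is a finite direct sum of transparent simple objects, so it suffices to show that the only transparent simple module is the tensor unit $L_{1,1} = L(c,0)$. On a simple module $L_{r,s}$ the operator $L_0$ has eigenvalues in $h_{r,s}(t) + \ZZ_{\ge 0}$, so its twist is the scalar $\theta_{L_{r,s}} = e^{2\pi i\,h_{r,s}(t)}\,\id$. By \eqref{eq:balance}, a simple object $T$ is transparent if and only if $\theta_{X\btimes T} = \theta_X \btimes \theta_T$ for all $X \in \ocfin$; taking $X = L_{r',s'}$ simple and decomposing $L_{r',s'}\btimes T$ into simples, this forces $h_{r'',s''}(t) \equiv h_{r',s'}(t) + h_{r,s}(t) \pmod{\ZZ}$ for every simple constituent $L_{r'',s''}$. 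In particular, any two simple summands occurring in such a fusion product have conformal weights differing by an integer.

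Now suppose $T = L_{r,s}$ is transparent. If $s \ge 2$, I would take $X = L_{1,2}$: using associativity together with the fusion rules \cref{thm:1sfusion,fusionrule},
\[
L_{1,2}\btimes L_{r,s} \cong L_{r,1}\btimes(L_{1,2}\btimes L_{1,s}) \cong L_{r,1}\btimes(L_{1,s-1}\oplus L_{1,s+1}) \cong L_{r,s-1}\oplus L_{r,s+1},
\]
so transparency forces $h_{r,s+1}(t) - h_{r,s-1}(t) \in \ZZ$. A one-line computation from \eqref{eq:virpara} gives $h_{r,s+1}(t) - h_{r,s-1}(t) = s\,t^{-1} - r$, which is not an integer since $t \notin \QQ$ and $s \ge 1$ --- a contradiction. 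If instead $r \ge 2$, I would take $X = L_{2,1}$ and argue symmetrically, obtaining $L_{2,1}\btimes L_{r,s}\cong L_{r-1,s}\oplus L_{r+1,s}$ and hence $h_{r+1,s}(t) - h_{r-1,s}(t) = r\,t - s \in \ZZ$, again impossible. Therefore $r = s = 1$, i.e.\ $T \cong L_{1,1} = L(c,0)$, and the only transparent objects are finite direct sums of the unit.

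I do not anticipate a serious obstacle here: the one point demanding care is the passage, via the balancing axiom, from transparency of $T$ to multiplicativity of the twist scalars along each fusion product, after which everything reduces to the immediate irrationality of the weight differences $s t^{-1} - r$ and $r t - s$. As an aside, non-degeneracy could alternatively be extracted from the braided tensor equivalence of \cref{prop:leftequiv} (and the corresponding property on the affine side), but the twist computation above is shorter and entirely internal to $\ocfin$.
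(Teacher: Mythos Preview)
Your proof is correct and follows essentially the same route as the paper's: reduce to a simple $T=L_{r,s}$, fuse with $L_{1,2}$ (or $L_{2,1}$), and use the balancing axiom to see that the double braiding acts by different scalars on the two summands $L_{r,s\pm1}$ because $h_{r,s+1}-h_{r,s-1}=st^{-1}-r\notin\ZZ$. The paper phrases the same observation as the ratio $e^{2\pi i s/t}$ of the two eigenvalues of the monodromy being $\neq 1$; your explicit reduction to simple transparent objects and the rewriting of transparency as multiplicativity of twist scalars just spells out a step the paper leaves implicit.
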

\begin{proof}
	Set $T= L_{r,s}$ with $s \neq 1$.  We then take $X=L_{1,2}$ so that the fusion rules \eqref{fr:L1s} and \eqref{fr:Lrs} give
	\begin{align}
		T \btimes X
		&\cong (L_{r,1} \btimes L_{1,s}) \btimes L_{1,2}
		\cong L_{r,1} \btimes (L_{1,s} \btimes L_{1,2})
		\cong L_{r,1} \btimes (L_{1,s-1} \oplus L_{1,s+1}) \notag \\
		&\cong L_{r,s-1} \oplus L_{r,s+1}.
	\end{align}
	As the twist coincides with the action of $e^{2 \pi i L_0}$, \eqref{eq:balance} shows that $c_{X, T}\circ c_{T, X}$ acts as multiplication by $e^{2 \pi i (h_{r,s\pm1}-h_{r,s}-h_{1,2})}$ on $L_{r,s\pm1}$.  The ratio $e^{2 \pi i s/t}$ of these eigenvalues is never $1$, hence $T$ is not transparent.  The case $r \neq 1$ is completely analogous.
\end{proof}

\flushleft
\bibliographystyle{alpha}

\end{document}